\numberwithin{equation}{section}
\newcommand{\calk}{\mathcal{K}}
\newcommand{\partt}{\frac{\partial}{\partial t}}
\newcommand{\ca}{\mathcal{A}}
\newcommand{\supp}{\mathop{\mathrm{supp}\,}\nolimits}
\newcommand{\ds}{\displaystyle}
\newcommand{\real}{\mathbb{R}}
\newcommand{\rn}{\mathbb{R}^n}
\newcommand{\nat}{\mathbb{N}}
\newcommand{\rd}{\mathbb{R}^d}
\newcommand{\ud}{\mathrm{d}}
\newcommand{\uD}{\mathrm{D}}
\newtheorem{lemma}{Lemma}[section]
\newtheorem{proposition}[lemma]{Proposition}
\newtheorem{theorem}[lemma]{Theorem}
\newtheorem{corollary}[lemma]{Corollary}
\newtheorem{definition}[lemma]{Definition}
\newtheorem{assumption}[lemma]{Assumption}
\newtheorem{rem}[lemma]{Remark}
\newcommand{\remark}[1]{\begin{rem}{\upshape #1}\end{rem}}
\newtheorem{example}[lemma]{Example}
\newcommand{\expl}[1]{\begin{example}{\upshape #1}\end{example}}
\newcommand{\proofstart}{\mbox{P\,r\,o\,o\,f\, :\quad}}
\newcommand{\proofend}{\nopagebreak\hfill\raisebox{0.3em}{\fbox{}}\\}
\newenvironment{proof}{\proofstart}{\proofend}
\newcommand{\V}{\mathcal{K}}
\newcommand{\K}{{K}}
\newcommand{\beq}{\begin{equation}}
\newcommand{\eeq}{\end{equation}}
\begin{document}

\title{Besov regularity of parabolic and hyperbolic PDEs}
\author{S.~Dahlke\thanks{Some parts of this paper have been written during a research stay of this author at the Erwin Schr\"odinger International Institute for Mathematics and Physics (ESI). The support of ESI is greatly acknowledged.} \ and C.~Schneider\thanks{The work of this author has been supported by Deutsche Forschungsgemeinschaft (DFG), Grant No. SCHN 1509/1-1.
}}

\date{}
\maketitle



\begin{abstract}
This paper is concerned with the regularity of  solutions {to} linear and nonlinear evolution equations on nonsmooth domains.   In particular, we study the smoothness in the specific scale 
$\ B^r_{\tau,\tau}, \ \frac{1}{\tau}=\frac{r}{d}+\frac{1}{p}\ $ of Besov spaces.   The regularity in these spaces determines the approximation order
that can be achieved by adaptive and other nonlinear approximation schemes. We show that for all cases under consideration the Besov regularity is high enough to 
justify the use of adaptive algorithms. 

{\em Key Words:} Parabolic evolution equations, hyperbolic equations, Besov spaces, Kondratiev spaces, adaptive algorithms. \\
{\em Math Subject Classifications. Primary:}   35B65,  35K55, 46E35.   {\em Secondary:} 35L15, 35A02,  35K05,  65M12.

\end{abstract}

\tableofcontents

\section{Introduction}\label{introduction}

This paper is concerned with regularity estimates of the solutions {to}
 evolution equations in nonsmooth  domains $\mathcal{O}$ contained in
${\mathbb R}^d $. In particular, we study parabolic equations of the form

\begin{equation} \label{parab-1a-i}
\left\{\begin{array}{rl}
\partt u+(-1)^m{L(x,t;D_x)}u\ =\ f \, &  \text{ in } K\times(0,T), \\
\frac{\partial^{k-1}}{\partial \nu^{k-1}}u\Big|_{\Gamma_{j,T}}\ =\ 0, & \   k=1,\ldots, m, \ j=1,\ldots, n,\\ 
u\big|_{t=0}\ =\ 0 \, & \text{ in } K
\end{array} \right\}
\end{equation}

in polyhedral cones $K\subset {\mathbb R}^3$ as well as associated  semilinear versions of 
(\ref{parab-1a-i}). 
 Here the partial differential operator is given by
\begin{equation}\label{L-op-i}
{L(x,t;D_x)}=\sum_{|\alpha|, |\beta|=0}^m D^{\alpha}_x({a_{\alpha \beta}(x,t)}D^{\beta}_x).
\end{equation}

We will also be concerned with  hyperbolic problems

\begin{equation} \label{hyp-1a-i}
\left\{\begin{array}{rl}
\frac{\partial^2}{\partial t^2}u+L(x,t,D_x) u \ = \ f  & \text{ in } \Omega\times (0,T), \\[0.1cm]
u(x,0) \ = \ \partt u(x,0)  \ = \ 0   & \text{ in } \Omega,\\[0.1cm]
u\big|_{\partial \Omega\times (0,T)} \ =\ 0 &    \\
\end{array}
\right\}
\end{equation}

in specific Lipschitz domains $\Omega\subset \real^d$, $d>2$,  where

 \[
L(x,t,D_x)u=-\sum_{i,j=1}^{{d}}\frac{\partial}{\partial{x_j}}\left(a_{ij}(x,t)\frac{\partial}{\partial {x_i}}u\right)
+\sum_{i=1}^{d}b_i(x,t)\frac{\partial}{\partial {x_i}}u+c(x,t)u,
\]
and   given function $f$. We study the spatial regularity of the
solutions to  (\ref{parab-1a-i}) and  (\ref{hyp-1a-i}) in specific
non-standard  smoothness spaces, i.e., in the so-called {\em adaptivity
scale {of Besov spaces}}

\begin{equation} \label{adaptivityscale}
B^r_{\tau,\tau}(\mathcal{O}), \quad \frac{1}{\tau}=\frac{r}{d}+\frac{1}{p}.
\end{equation}

The motivation for these kinds of studies can be explained as follows.  Evolution equations of the form (\ref{parab-1a-i}) and  (\ref{hyp-1a-i})
play important roles in the modelling of a lot of problems in science and engineering. As a classical example corresponding to (\ref{parab-1a-i}) let us
mention the heat equation that describes the variation in  temperature in a given region over time.
 In many cases,  analytic forms of the solutions
are not available, so that numerical schemes for their  constructive approximation are needed.  When it comes to practical applications, very often systems with
hundreds of thousands or even millions of degrees of freedom have to be handled.  In this case, {\em adaptive} strategies are often unavoidable to increase
efficiency. Essentially, an adaptive scheme is an updating strategy, where additional degrees of freedom are only spent in regions where
the numerical approximation is still `far away' from the exact solution.  Although the underlying idea is convincing, the following
general question arises:   what is the order of convergence that can be achieved by adaptive algorithms, and is it higher than the convergence order of
classical nonadaptive (uniform) schemes, which are much easier to design and to implement? As a rule of thumb, the following statement can be made:  the
convergence order that can be achieved by adaptive algorithms is determined by the regularity of the exact solution  in the adaptivity scale  (\ref{adaptivityscale})
of Besov spaces, whereas the convergence order that is available for  nonadaptive schemes
depends on  the classical Sobolev smoothness. In particular, for adaptive
wavelet schemes, these relations  can be {shown precisely}, we refer, e.g. to {\cite{DDV97, CDD1, DDD, Ste09}} for details.
Quite recently, it has turned out that the same interrelations also hold for the very important and  widespread adaptive finite
 element schemes, cf.  \cite{GM09}. Therefore, we can draw the following conclusion: adaptivity is justified,
 if the Besov regularity of the solution
in the Besov scale
(\ref{adaptivityscale}) is higher than its Sobolev smoothness!

For the case of {\em elliptic} partial differential equations, a lot of positive results in this direction are already established  \cite{Dah98, Dah99a, Dah99b, Dah02, DDHSW}. It is well--known
that  if the domain under consideration, the right--hand side and the coefficients are sufficiently smooth, then the problem is completely regular \cite{ADN59}, and there
is no reason why the  Besov smoothness should be higher than the Sobolev regularity. However, on general Lipschitz domains and in particular in polyhedral domains, the situation changes
dramatically.  On  these domains, singularities at the boundary may occur that diminish the Sobolev regularity of the solution significantly \cite{JK95, Gris92, Gris11}.
However, the analysis in the above mentioned papers shows that these boundary singularities do not influence the Besov regularity too much, so that the use of
adaptive algorithms for elliptic PDEs is completely justified!

 In this paper, we study similar questions for evolution equations of the form  
(\ref{parab-1a-i}),  
(\ref{hyp-1a-i}), and of associated 
semilinear versions of them. We show that in all these cases the Besov regularity is high enough 
to justify the use of adaptive algorithms. To the best of our knowledge, not so many results in this direction are available
so far.  For parabolic equations, first  results   for the special case of the heat equation have been reported 
in \cite{AGI08, AGI10, AG12}, but for a slightly different scale of Besov spaces.  For hyperbolic equations, let us mention
the seminal paper of DeVore and Lucier \cite{DVL90} which is concerned with concervation laws in 1D. \\ 
The main ingredients to prove our
results are regularity estimates in so--called {\em Kondratiev spaces}.  The study of solutions to PDEs in Kondratiev
spaces has already quite a long history. We refer, e.g. to \cite{KO83, JK95, Gris92, MR10}
(this list is clearly not complete). The basic idea is the following. As already outlined above, on  nonsmooth domains the solutions to PDEs 
as well as their  derivatives might  become highly singular as one approaches the boundary. Nevertheless, it has turned out
that their strong growth can to some extent be compensated by  means of specific weights that consist of the regularized distance
to the singular set of the domain to some power. We refer to Section \ref{Sect-2} for a detailed description of these spaces. 
 Recent studies have also shown that these Kondratiev spaces are very much related with Besov spaces in
 the adaptivity scale 
 (\ref{adaptivityscale}) in the sense that powerful embedding results exist, see, e.g. \cite{Han15}. So, Besov regularity results can be
established by first studying the equation under consideration in Kondratiev spaces and then using known (or deriving new)  embeddings
into Besov spaces.

We carry out this program in the following steps.  {In Section \ref{Sect-4}  we consider} parabolic evolution equations
in polyhedral cones contained in ${\mathbb R}^3$.  For these problems, regularity estimates in Kondratiev spaces have been
derived in  \cite{LL15}. However, for our purposes, it has been necessary to modify these results in order to treat semilinear problems later on as well,  see Section \ref{Sect-3}. Then, by a combination with embedding results from 
\cite{Han15}, we obtain our first main results.  They tell  us that if the right-hand side as well as its time derivatives are
contained in specific Kondratiev spaces, then, for every $t \in (0,T)$  the spatial Besov smoothness  
of the solution to (\ref{parab-1a-i}) is always larger than  $2m$, provided that some technical conditions on the operator pencil are satisfied, see Theorems \ref{thm-parab-Besov} and \ref{thm-parab-Besov-2}.
The reader should observe that the results are independent of the shape of the cone, and that the classical Sobolev smoothness
is usually limited by  $m$, see \cite{LL15}.  Therefore, for every $t$, the spatial Besov regularity is more than twice as
high as the Sobolev smoothness, which of course justifies the use of (spatial) adaptive algorithms. 
Moreover,  for smooth domains
and  right--hand sides in $L_2,$ the best one could  expect would be smoothness order $2m$ in 
the classical Sobolev scale. So, the Besov smoothness on polyhedral cones is at least as high as the Sobolev smoothness 
on smooth domains. 

Then, in Subsection \ref{Subsect-4.2} we generalize this result to semilinear equations of the form

\begin{equation} \label{parab-nonlin-1-i}
\left\{\begin{array}{rl}
\partt u+(-1)^m{L(x;D_x)}u +\varepsilon u^{M}\ =\ f \, &  \text{ in } K_T, \\
\frac{\partial^{k-1}}{\partial \nu^{k-1}}u\Big|_{\Gamma_{j,T}}\ =\ 0, & \   k=1,\ldots, m, \ j=1,\ldots, n,\\ 
u\big|_{t=0}\ =\ 0 \, & \text{ in } K,
\end{array} \right\}
\end{equation}

where $\varepsilon>0$ and $M\in \nat$. 
We show that in a sufficiently small ball containing the solution of the corresponding linear equation, there exists
a unique solution to  
(\ref{parab-nonlin-1-i}) possessing the same Besov smoothness in the scale 
(\ref{adaptivityscale}).  The proof is performed by a technically quite involved application of the Banach fixed point theorem. We show that 
(\ref{parab-nonlin-1-i}) has a unique solution in both, the classical scale of  Sobolev spaces as well as in the scale of Kondratiev spaces, and then again 
the result follows by an application of the embedding results from \cite{Han15}. The final result is stated in Theorem \ref{nonlin-B-reg3}. 

The next natural step is to also  study  the regularity in time direction.  We show that the mapping  $t\mapsto u(t, \cdot)$  is in fact a $C^l$-map into the adaptivity scale
 of Besov spaces,  
precisely, 
$$u \in \mathcal{C}^{l,\frac{1}{2}}((0,T), B^{\gamma}_{\tau,\infty}(K)), $$
see Theorem \ref{Hoelder-Besov-reg}. 

{In Section \ref{Sect-4a} we turn our attention to parabolic Besov regularity on general Lipschitz domains. Once again, our results rely on regularity estimates in Kondratiev spaces, which, for the case of general Lipschitz domains, have been derived in  \cite{Kim11}.  As in the previous section  we  use embedding results of Kondratiev spaces into the scale of Besov spaces for general Lipschitz domains as e.g. obtained in  \cite{Cio-Diss}. Comparing the regularity results  for general Lipschitz domains with the ones for  polyhedral cones from Section \ref{Sect-4}, it turns out that (as  expected) the results for the more specific  cones are much stronger. Furthermore, it is important to note that the analysis in Section \ref{Sect-4a} is restricted to second order operators, in contrast to the differential operators of general order  we considered before, cf. \eqref{L-op-i}. \\
}
Finally, in Section \ref{Sect-5} we study hyperbolic equations of the form 
(\ref{hyp-1a-i}). Also for these kinds of equations regularity estimates in Kondratiev spaces have 
been derived in \cite{LT15}, so that it is tempting to proceed in a similar way as in the parabolic
case. But then the following problem occurs:  the specific domains treated in \cite{LT15}
are not directly covered by the theory presented in \cite{Han15}. Therefore, we proceed in a slightly different way.
We use the  fact that Besov spaces can be characterized by wavelet expansions, and therefore, in order to establish
Besov smoothness of the solutions to (\ref{hyp-1a-i}), their wavelet coefficients have to be estimated.  This can be
done by once again exploiting the regularity estimates in Kondratiev spaces. \\

\section{Function spaces}\label{Sect-2}

\subsection{Preliminaries}

We start by collecting some general notation used throughout the paper. As usual,  we denote by $\nat$ the set of all natural numbers, $\nat_0=\mathbb N\cup\{0\}$, and 
$\real^d$, $d\in\nat$,  the $d$-dimensional real Euclidean space with $|x|$, for $x\in\real^d$, denoting the Euclidean norm of $x$. 
By $\mathbb{Z}^d$ we denote the lattice of all points in $\real^d$ with integer components. \\
We denote by  $c$ a generic positive constant which is independent of the main parameters, but its value may change from line to line. 
The expression $A\lesssim B$ means that $ A \leq c\,B$. If $A \lesssim
B$ and $B\lesssim A$, then we write $A \sim B$.  

Given two quasi-Banach spaces $X$ and $Y$, we write $X\hookrightarrow Y$ if $X\subset Y$ and the natural embedding is bounded. By $\supp f$ we denote the support of the function $f$. For a domain $\Omega\subset \real^d$ and $r\in \nat\cup \{\infty\}$ we write $C^r(\Omega)$ for the space of all {real}-valued $r$-times continuously differentiable functions, 
whereas $C(\Omega)$ is the space of bounded uniformly continuous functions, and  $\mathcal{D}(\Omega)$ for the set of test functions, i.e., the collection of all infinitely differentiable functions with  {support compactly contained in $\Omega$. Moreover,  $L^1_{\text{loc}}(\Omega)$ denotes the space of locally integrable functions on $\Omega$.} \\
For  a multi-index  $\alpha = (\alpha_1, \ldots,\alpha_d)\in \nat_0^d$ with with  $|\alpha| := \alpha_1+\ldots+ \alpha_d=r$, $r\in \nat_0$,  and an $r$-times differentiable function $u:\Omega\rightarrow \real$, we write 
\[
D^{(\alpha)}u=\frac{\partial^{|\alpha|}}{\partial x_1^{\alpha_1}\dots \partial x_d^{\alpha_d}} u
\]
for the corresponding classical partial derivative as well as $u^{(k)}:=D^{(k)}u$ in the one-dimensional case. Hence, the space $C^r(\Omega)$ is normed by 
\[
\| u| C^r(\Omega)\|:=\max_{|\alpha|\leq r}\sup_{x\in \Omega}|D^{(\alpha)}u(x)|<\infty. 
\]
Moreover, $\mathcal{S}(\real^d)$ denotes the Schwartz space of rapidly decreasing functions. The set of distributions on $\Omega$ will be denoted by $\mathcal{D}'(\Omega)$, whereas $\mathcal{S}'(\real^d)$ denotes the set of tempered distributions on $\real^d$. The terms {\em distribution} and {\em generalized function} will be used synonymously. For the application of a distribution $u\in \mathcal{D}'(\Omega)$ to a test function $\varphi\in \mathcal{D}(\Omega)$ we write $(u,\varphi)$. The same notation will be used if $u\in \mathcal{S}'(\real^d)$ and $\varphi\in \mathcal{S}(\real^d)$ (and also for the inner product in $L_2(\Omega)$).  For $u\in \mathcal{D}'(\Omega)$  and a multi-index $\alpha = (\alpha_1, \ldots,\alpha_d)\in \nat_0^d$, we write $D^{\alpha}u$ for the $\alpha$-th {\em generalized} or {\em distributional derivative} of $u$ with respect to $x=(x_1,\ldots, x_d)\in \Omega$, i.e., $D^{\alpha}u$ is a distribution on $\Omega$, uniquely determined by the formula   
\[
(D^{\alpha}u,\varphi):=(-1)^{|\alpha|}(u,D^{(\alpha)}\varphi), \qquad \varphi \in \mathcal{D}(\Omega). 
\]
{In particular, if  $u\in L^1_{\text{loc}}(\Omega)$ and  there exists a function $v\in L^1_{\text{loc}}(\Omega)$ such that 
\[
\int_\Omega v(x)\varphi(x)\ud x=(-1)^{|\alpha|}\int_{\Omega}u(x)D^{(\alpha)}\varphi(x)\ud x \qquad \text{for all} \qquad \varphi \in \mathcal{D}(\Omega), 
\]
we say that $v$ is the {\em $\alpha$-th weak derivative} of $u$ and  write $D^{\alpha}u=v$. 
}
We also use the notation $
\frac{\partial^k}{\partial x_j^k}u:=D^{\beta}u
$ as well as $\partial_{x_j^k}:=D^{\beta}u$,   for some 
multi-index  $\beta=(0,\ldots, k, \ldots,0)$ with $\beta_j=k$, $k\in \nat$. Furthermore, for $m\in \nat_0$, we write $D^mu$ for any (generalized) $m$-th order derivative of $u$, where $D^0u:=u$ and $Du:=D^1u$. Sometimes we shall use subscripts such as $D^m_x$ or  $D^m_t $ to emphasize that we only take derivatives with respect to $x=(x_1, \ldots, x_d)\in \Omega$ or $t\in \real$.

{For our analysis of parabolic and hyperbolic problems we shall deal with several different types of domains, which we introduce now.} \\
Let  $D$  denote some bounded polyhedral domain in $\real^d$. For $0<T<\infty$ put $D_T=D\times (0,T)$.  
As a special case of a polyhedral domain in $\real^3$  we will consider a cone (unbounded) with edges defined as follows.

\begin{definition}\label{def_polyhedral_cone}
Let \\ 
\begin{minipage}{0.5\textwidth}
\[
\K:=\{x\in \mathbb{R}^3: \ x/|x|=w\in \Omega\}
\]
be a polyhedral cone in $\mathbb{R}^3$ with vertex at the origin. Suppose that the boundary $\partial \K$ consists of the vertex $x=0$, the edges (half-lines) $M_1,\ldots, M_n$ and smooth faces $\Gamma_1,\ldots, \Gamma_n$. Moreover,  $\Omega=\K\cap S^2$ is a curvilinear polygon on the unit sphere bounded by the arcs $\gamma_1,\ldots, \gamma_n$. The angle at the edge $M_k$ will be denoted by $\theta_k$, $k=1,\ldots,n$. \\
Furthermore,  put 
$ 
\Gamma_{j,T}=\Gamma_j\times(0,T)$ and  $K_T=K\times (0,T). $ \\
\end{minipage}\hfill \begin{minipage}{0.4\textwidth}
\includegraphics[width=5cm]{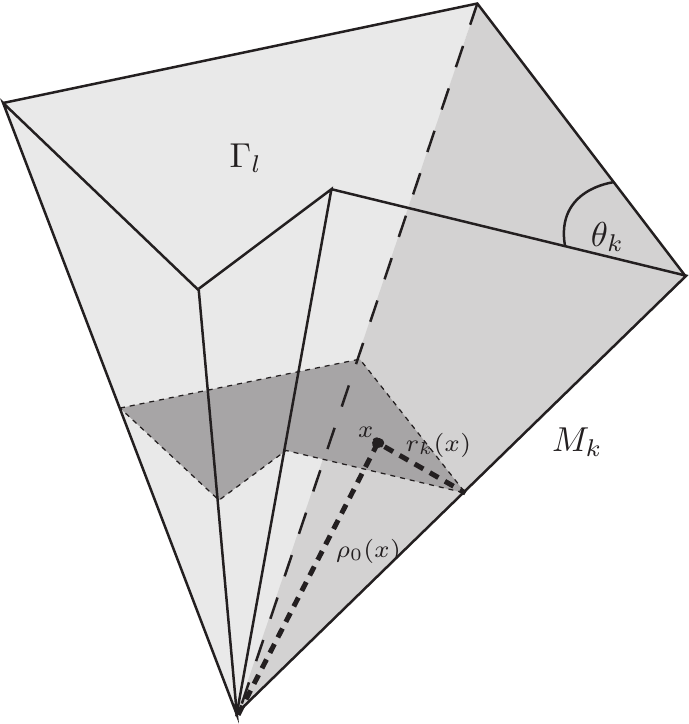}
\captionof{figure}{Polyhedral cone $K$}
\end{minipage}\\
\end{definition}

We shall also deal with the truncated cone 
\begin{equation}\label{trunc-cone}
K_0:=\{x\in K: \ |x|<r_0\} 
\end{equation}
for some real number $r_0>0$ and put $K_{0,T}:=K_0\times (0,T)$.


For hyperbolic problems we will consider the following domains.  

\begin{definition}\label{def_special_Lip}
Let  $\Omega\subset \real^d$, $d>2$,  be a Lipschitz domain, whose boundary $\partial \Omega$ consists of two (smooth) surfaces $\Gamma_1$ and $\Gamma_2$ intersecting along a manifold $l_0$. We assume that in a neighbourhood of each point of $l_0$ the set $\overline{\Omega}$ is diffeomorphic to a dihedral angle. {For any $P\in l_0$ {define $T_1(P)$ and $T_2(P)$ as (part of) the tangent spaces in $P$ w.r.t. $\Gamma_1$ and $\Gamma_2$.}} 
Furthermore, $\Omega_T:=\Omega\times (0,T)$. \\

\begin{minipage}{\textwidth}
\begin{center} \includegraphics[width=11cm]{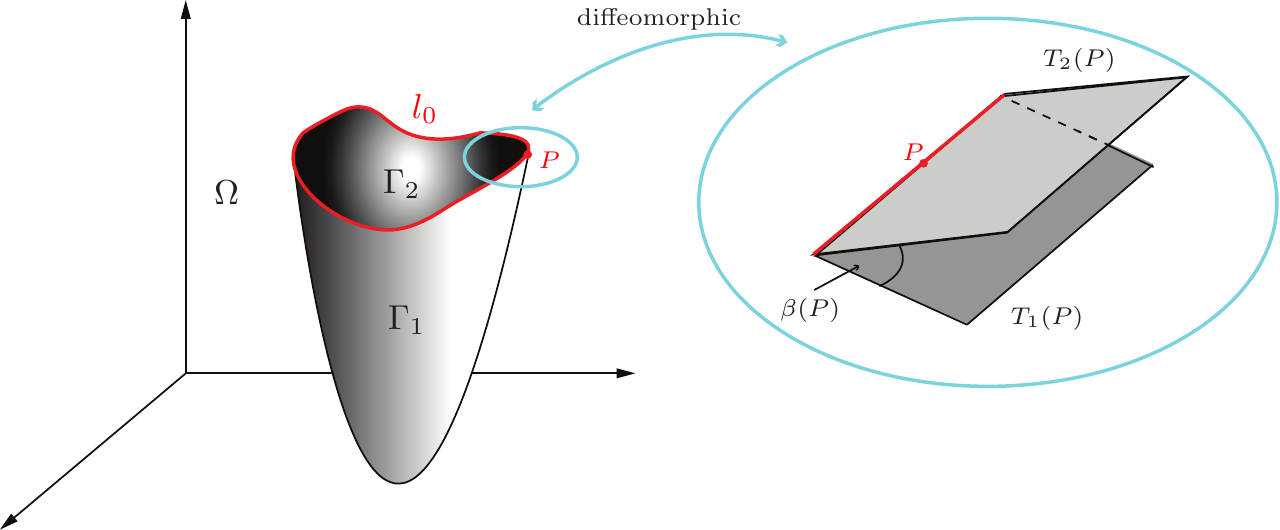}\end{center}
\captionof{figure}{Special Lipschitz domain $\Omega$}
\end{minipage}\\
\end{definition}

\subsection{Sobolev and H\"older spaces}

Unless stated otherwise, let $\mathcal{O}$ stand for either one of the domains $D$, $K$ or $\Omega$ defined above and put  $\mathcal{O}_T:=\mathcal{O}\times (0,T)$. 
We introduce some Sobolev spaces. Let $W_p^m(\mathcal{O})$, $1<p<\infty$, $m\in \nat_0$, denote the Sobolev space containing all complex-valued functions $u(x)$ defined on $\mathcal{O}$ such that 
\[
\|u|W_p^m(\mathcal{O})\|=\left(\sum_{|\alpha|\leq m}\int_{\mathcal{O}}|D^{\alpha}_xu(x)|^p\ud x\right)^{1/p}<\infty.
\]
By $\mathring{W}_p^m(\mathcal{O})$ we denote the closure of $\mathcal{D}(\mathcal{O})$ in $W_p^m(\mathcal{O})$. Moreover,  $W^{-m}_p(\mathcal{O})$ stands for the dual space $\left(\mathring{W}^m_p(\mathcal{O})\right)'$ of $\mathring{W}^m_p(\mathcal{O})$. The duality pairing is denoted by $(u, v)$ for $u\in W^{-m}_p(\mathcal{O})$ and $v\in\mathring{W}^m_p(\mathcal{O})$.  

\medskip

{For $s\geq 0$, fractional Sobolev spaces $W^s_2(\real^d)$ are defined as the spaces which contain  all complex-valued  functions $u(x)$ defined on $\real^d$} such that for $s=k+\lambda$ with $k\in \nat_0$ and $\lambda \in (0,1)$ it holds 
\begin{equation}\label{slobodeckij}
\|u|W^s_2(\real^d)\|=\|u|W^k_2(\real^d)\|+\sum_{|\alpha|=k}\left(\int_{\real^d\times \real^d}\frac{|D^{\alpha}u(x)-D^{\alpha}u(y)|^2}{|x-y|^{d+2\lambda}}\ud x\ud y\right)^{1/2}<\infty.
\end{equation}
It is well known that an equivalent norm is given by 
\[
\|u|W^s_2(\real^d)\|\sim \|\left((1+|\xi|^2)^{s/2}\hat{u}\right)^{\vee}|L_2(\real^d)\|, 
\]
(where $\hat{u}$ and $\check{u}$ denote the Fourier transform and its inverse, respectively), cf. \cite{Hack92}. 
Corresponding spaces on domains can be defined via restriction, i.e., we put 
\begin{eqnarray*}
W^s_2(\mathcal{O})&=& \left\{f\in L_2(\mathcal{O}): \ \exists g\in W^s_{2}(\real^d), \ g\big|_{\mathcal{O}}=f \right\},\\
\|f|W^s_{2}(\mathcal{O})\|&=& \inf_{g|_{\mathcal{O}}=f}\|f|W^s_{2}(\real^d)\|. 
\end{eqnarray*}

\paragraph{The spaces $W^m_p(I,X)$ and $\mathcal{C}^{k,\alpha}(I,X)$}

Consider a Banach space $X$ and an open interval $I=(0,T)\subset \real$ with $T<\infty$.  We write $C(I,X)$ for the space consisting of all bounded and (uniformly) continuous functions $u: I\rightarrow X$ normed by 
\[
\|u|C(I,X)\|:=\max_{t\in I}\|u(t)|X\|.
\] 
Moreover, we say that $u\in C^k(I,X)$, $k\in \nat_0$, if $u$ has a Taylor expansion 
\[
u(t+h)=u(t)+u'(t)h+\frac{1}{2}u''(t)h^2+\ldots + \frac{1}{k!}u^{(k)}(t)h^k+r_k(t,h)
\] 
for all $t+h,t\in I$ such that 
\begin{itemize}
\item $u^{(j)}(t)$ depends continuously on $t$ for all $j=0,\ldots, k$,
\item $\ds \lim_{|h|\rightarrow 0}\frac{\|r_k(t,h)|X\|}{|h|^k}= 0$. 
\end{itemize}
The space $C^k(I,X)$ is then equipped with the following norm 
\[
\|u|C^k(I,X)\|:=\sum_{j=0}^k \|u^{(j)}|C(I,X)\|.
\]
Given $\alpha\in (0,1)$, we denote by $\mathcal{C}^{\alpha}(I,X)$ the H\"older space containing all $u\in C(I,X)$ such that 
\begin{eqnarray*}
\|u|{\mathcal{C}^{\alpha}}(I,X)\|&:=&\|u|C(I,X)\|+|u|_{C^{\alpha}({I,X})}\\
&=&\|u|C(I,X)\|+\sup_{t,s\in I\atop t\neq s}\frac{\|u(t)-u(s)|X\|}{|t-s|^{\alpha}}<\infty.
\end{eqnarray*}
Consequently, $\mathcal{C}^{k,\alpha}(I,X)$ contains all functions $u\in C(I,X)$ such that 
\[
\|u|{\mathcal{C}^{k,\alpha}}(I,X)\|:=\|u|C^k(I,X)\|+|u^{(k)}|_{C^{\alpha}({I,X})}<\infty.
\]

The above concepts extend to spaces $C^k(I,Y)$ {and $\mathcal{C}^{k,\alpha}(I,Y)$, respectively}, where $Y$ is a quasi-Banach space. For some further comments we refer to  Remark \ref{quasi-B-deriv} in  Appendix \ref{Appendix-A}. 

Let us briefly recall the definition of Lebesgue and Sobolev spaces for functions with values in a Banach space $X$. We denote by $L_p(I,X)$, $1\leq p\leq \infty$, the space of (equivalence classes of) measurable functions $u:I\rightarrow X$ such that the mapping $t\mapsto \|u(t)\|_X$ belongs to $L_p(I)$, which is endowed with the norm 
\[
\|u|{L_p(I,X)}\|=
\begin{cases}
\ds \left(\int_I \|u(t)|{X}\|^p\ud t\right)^{1/p} & \text{if } p<\infty, \\
\ds \mathrm{ess}\ \sup_{t\in I}\|u(t)|X\| & \text{if } p=\infty. 
\end{cases}
\]
{The definition of weak  derivatives of  Banach-space valued distributions is a natural generalization of the one for real-valued distributions. We refer to \cite[Part I, Sect. 3]{Cap14} in this context. Let  $\mathcal{D}'(I,X):=\mathcal{L}(\mathcal{D}(I),X)$ be the space of $X$-valued distributions,  where $\mathcal{L}(U,V)$ denotes the space of all linear continuous functions from $U$ to $V$. For the application of a distribution $u\in \mathcal{D'}(I,X)$ to a test function $\varphi\in \mathcal{D}(I)$, we use the notation $(u,\varphi)$. 
For  $u\in \mathcal{D}'(I,X)$ and $k\in \nat$,  the  {$k$-th \em generalized} or {\em distributional derivative} ${\partial_{t^k}}u$ is defined as an $X$-valued  distribution satisfying 
\[
({\partial_{t^k}}u,\varphi):=(-1)^k (u, {\partial_{t^k}}\varphi), \qquad \varphi\in \mathcal{D}(I).
\]
In particular, if $u:I\rightarrow X$ is an integrable function  and there exists an integrable function $v:I\rightarrow X$ satisfying  
\[
\int_I v(t)\varphi(t)\ud t=(-1)^k\int_I u(t)\partial_{t^k}\varphi(t)\ud t\qquad \text{for all} \quad \varphi\in \mathcal{D}(I), 
\]
where the integrals above are  Bochner integrals, cf. \cite{Cohn13}, we say that $v$ is the {\em $k$-th weak derivative} of $u$ and write  ${\partial_{t^k}}u=v$.  
}
For $m\in \nat_0$ we denote by   $W^m_p(I,X)$  the space of all  {functions $u\in L_p(I,X)$}, whose weak derivatives of order $0\leq k\leq m$ belong to $L_p(I,X)$,  normed by  
\[
\|u\|_{W^m_p(I,X)}=\begin{cases}
\ds \left(\sum_{k=0}^{m}\left\|\partial_{t^k} u|{L_p(I,X)}\right\|^p\right)^{1/p}& \text{if } p<\infty, \\
\ds \max_{0\leq k\leq m}\left\|\partial_{t^k}u|{L_{\infty}(I,X)}\right\| & \text{if }p=\infty.
\end{cases}
\]
$L_p(I,X)$ and $W^m_p(I,X)$ are Banach spaces. \\

In the course of this paper, we will also need a version of Sobolev's embedding theorem for Banach-space valued functions. For the reader's convenience we give a short proof in Appendix \ref{Appendix-A}, since a suitable reference was not found.

\begin{theorem}[Sobolev embedding]\label{thm-sob-emb}
Let $1<p<\infty$ and $m\in \nat$. Then 
\begin{equation}
W^{m}_p(I,X)\hookrightarrow \mathcal{C}^{m-1,1-\frac 1p}(I,X). 
\end{equation}
\end{theorem}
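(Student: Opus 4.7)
The plan is to mimic the scalar Sobolev embedding proof, with vector-valued ingredients (Bochner integral, vector-valued fundamental theorem of calculus, H\"older's inequality for $L_p(I,X)$), and then iterate over the derivative order.

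\textbf{Step 1 (Base case $m=1$).} First I would show that every $u\in W^1_p(I,X)$ possesses a continuous representative $\tilde u\in C(I,X)$ satisfying the Bochner-valued fundamental theorem of calculus
\[
\tilde u(t)-\tilde u(s)=\int_s^t \partial_t u(r)\,\ud r, \qquad s,t\in I.
\]
This is standard: the function $w(t):=\int_0^t \partial_t u(r)\,\ud r$ is continuous (in fact absolutely continuous) because $\partial_t u \in L_p(I,X)\subset L_1(I,X)$ (here the boundedness of $I$ enters), and its weak derivative equals $\partial_t u$; hence $u-w$ has vanishing weak derivative, which forces $u=w+\text{const}$ a.e., giving the representative $\tilde u$. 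A reference in the paper's spirit is \cite{Cap14}.

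\textbf{Step 2 (H\"older and sup estimates for $m=1$).} From the FTC and H\"older's inequality with exponent $p$ one immediately has
\[
\|\tilde u(t)-\tilde u(s)\|_X \leq \int_s^t \|\partial_t u(r)\|_X\,\ud r \leq |t-s|^{1-\frac{1}{p}}\,\|\partial_t u\|_{L_p(I,X)},
\]
which delivers the H\"older seminorm bound. For the sup bound, I pick $s_0\in I$ with $\|\tilde u(s_0)\|_X \leq T^{-1/p}\|u\|_{L_p(I,X)}$ (such a point exists by the mean value of the scalar function $t\mapsto \|\tilde u(t)\|_X \in L_p(I)$) and estimate
\[
\|\tilde u(t)\|_X \leq \|\tilde u(s_0)\|_X + \|\tilde u(t)-\tilde u(s_0)\|_X \leq T^{-\frac 1p}\|u\|_{L_p(I,X)} + T^{1-\frac 1p}\|\partial_t u\|_{L_p(I,X)}.
\]
Combining both estimates gives $\|\tilde u\|_{\mathcal C^{1-1/p}(I,X)} \lesssim \|u\|_{W^1_p(I,X)}$, proving the case $m=1$.

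\textbf{Step 3 (Iteration for general $m$).} For $u\in W^m_p(I,X)$ each weak derivative $\partial_{t^k}u$, $0\leq k\leq m-1$, lies in $W^1_p(I,X)$, so by Step 1 it has a continuous representative $v_k\in C(I,X)$, and by Step 2
\[
v_{m-1}\in \mathcal{C}^{1-\frac{1}{p}}(I,X), \qquad \|v_{m-1}\|_{\mathcal{C}^{1-1/p}(I,X)} \lesssim \|u\|_{W^m_p(I,X)}.
\]
Furthermore, the FTC in Step 1 applied to $\partial_{t^k}u$ shows that $v_k$ is classically differentiable in the sense of $C^1(I,X)$ with derivative $v_{k+1}$. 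Hence $v_0$ lies in $C^{m-1}(I,X)$, its $(m-1)$-th classical derivative coincides with $v_{m-1}$, and the H\"older estimate for $v_{m-1}$ together with the telescoped bounds $\|v_k\|_{C(I,X)}\lesssim \|u\|_{W^m_p(I,X)}$ (obtained from Step 2 applied to each $\partial_{t^k}u$) produce
\[
\|v_0\|_{\mathcal C^{m-1,1-1/p}(I,X)} \lesssim \|u\|_{W^m_p(I,X)},
\]
which is the assertion. The only non-routine point is the vector-valued FTC in Step 1; everything else is Bochner analogue of classical scalar arguments and does not cause any difficulty as long as $I$ is bounded and $p>1$ (the latter ensures the H\"older exponent $1-1/p$ is strictly positive).
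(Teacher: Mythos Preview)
Your proof is correct and uses the same core computation as the paper (vector-valued fundamental theorem of calculus combined with H\"older's inequality on the Bochner space $L_p(I,X)$). The organizational difference is that the paper argues by density: it invokes a vector-valued Meyers--Serrin theorem to reduce to $u\in C^\infty(I,X)$, establishes the H\"older estimate for the $(m-1)$-th derivative of such smooth $u$, and then extends the identity map from this dense subset to all of $W^m_p(I,X)$; you instead work directly with weak derivatives, constructing the continuous representative via the Bochner FTC and iterating to obtain the $C^{m-1}$ structure explicitly. Your route is slightly more self-contained (no Meyers--Serrin needed) and is more explicit about the sup-norm bounds for the lower-order derivatives, which the paper's proof leaves implicit in the density/extension step; the paper's route is shorter but relies on the external reference \cite{CH98} for the base case $m=1$.
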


\remark{In particular, we have 
$\ 
W^1_p(I,X)\hookrightarrow \mathcal{C}^{0,1-\frac 1p}(I,X)
\ $
for $m=1$. This was proven in \cite[Thm. 1.4.38]{CH98}. 
}

\medskip

With the notation introduced above we  further put  for brevity 
\[
L_p(\mathcal{O}_T):=L_p((0,T), L_p(\mathcal{O})) 
\]
and 
$$
W^{m,l}_p(\mathcal{O}_T):=W^{l-1}_p((0,T), W^m_p(\mathcal{O}))\cap W^{l}_p((0,T),L_p(\mathcal{O})),\qquad l\in \nat, 
$$ 
normed by 
\[\|u|W^{m,l}_p(\mathcal{O}_T)\|=\|u|W^{l-1}_p((0,T), W^m_p(\mathcal{O}))\|+\|u|W^{l}((0,T),L_p(\mathcal{O}))\|.\]
The space ${\mathring{W}}_p^{m,l}(\mathcal{O}_T)$ is the closure in $W_p^{m,l}(\mathcal{O}_T)$ of the set consisting of all functions $u\in C^{\infty}(\mathcal{O}_T)$, which vanish near $\partial \mathcal{O}\times [0,T]$.\\

\subsection{Weighted Sobolev spaces}

In the sequel we shall further  consider several types of  weighted Sobolev spaces. The first {ones} are the so-called {\em Kondratiev spaces} $\V^m_{p,a}(\mathcal{O})$, defined as the collection of all functions $u(x)$ such that  

\begin{equation}\label{Kondratiev-1}
\|u|\V^m_{p,a}(\mathcal{O})\|:=\left(\sum_{|\alpha|\leq m}\int_{\mathcal{O}} |\varrho(x)|^{p(|\alpha |-a)}|D^{\alpha}_x u(x)|^p\ud x\right)^{1/p}<\infty,
\end{equation}
where $a\in \real$, $1<p<\infty$, $m\in \nat_0$, $\alpha\in \nat^n_0$, and the weight function $\varrho: D\rightarrow [0,1]$ is the smooth distance to the singular set of $\mathcal{O}$, i.e., $\varrho$ is a smooth function and in the vicinity of the singular set $S$  it is {equivalent} to the distance to that set.  In particular,  if $\mathcal{O}=D$ or $\mathcal{O}=K$ then in  2D the  singular set  $S$ consists of the vertices of the polygon whereas in  3D the set $S$ consists of vertices and edges of the polyhedra/polyhedral cone. \\ 

Generalizing the above concept to functions depending on the time $t\in (0,T)$, we define Kondratiev type spaces  on $\mathcal{O}_T$, denoted by $L_q((0,T),\V^m_{p,a}(\mathcal{O}))$, which   contain all functions $u(x,t)$ such that 
\begin{align}
\|u|&L_q((0,T), \V^m_{p,a}(\mathcal{O}))\|\notag\\
&:=\left(\int_{(0,T)}\left(\sum_{|\alpha|\leq m}\int_{D} |\varrho(x)|^{p(|\alpha |-a)}|D^{\alpha}_x u(x,t)|^p\ud x\right)^{q/p}\ud t\right)^{1/q}<\infty, \label{Kondratiev-3}
\end{align}
with $0<q\leq \infty$ and  parameters $a,p,m$  as above.

Finally, concerning the special Lipschitz domains $\Omega\subset \real^d$, $d>2$,  from Definition \ref{def_special_Lip}
the corresponding Sobolev spaces $\mathcal{K}^m_{p,a}(\Omega)$ we consider are defined as in \eqref{Kondratiev-1} and contain all functions $u(x)$ such that 
\begin{equation}\label{Kondratiev-5}
\|u|\mathcal{K}^m_{p,a}(\Omega)\|:=\left(\sum_{|\alpha|\leq m}\int_{\Omega}|\rho(x)|^{p(|\alpha|-a)}|D^{\alpha}_x u(x)|^p\ud x\right)^{1/p}<\infty,
\end{equation}
where the weight function $\rho:\Omega\rightarrow [0,1]$ now is the smooth distance to $l_0$, i.e., $\rho$ is a smooth function and in the vicinity of $l_0$ it is equivalent to $\mathrm{dist}(x,l_0)$. The spaces $L_q((0,T),\mathcal{K}^{m}_{p,a}(\Omega))$ are defined in an obvious way {analogously} to \eqref{Kondratiev-3}.  \\

\paragraph{Some properties of weighted Sobolev spaces}

\begin{itemize}
\item Clearly, by definition we have the following type of embeddings 
\begin{equation}\label{kondratiev-emb}
K^{m}_{p,a}(\mathcal{O})\hookrightarrow K^{m'}_{p,a}(\mathcal{O}), \qquad K^{m}_{p,a}(\mathcal{O})\hookrightarrow K^{m}_{p,a'}(\mathcal{O}),
\end{equation}
if  $m'<m$ and $a'<a$. 
\item A function ${\varphi} \in C^m(\mathcal{O})$ is a pointwise multiplier for $\mathcal{K}^m_{p,a}(\mathcal{O})$, i.e., ${\varphi} u\in \mathcal{K}^m_{p,a}(\mathcal{O})$ for all $u\in \mathcal{K}^m_{p,a}(\mathcal{O})$ and 
\begin{equation}\label{multiplier}
\|{\varphi} u|\mathcal{K}^m_{p,a}(\mathcal{O})\|\leq c_{{\varphi}}\|u|\mathcal{K}^m_{p,a}(\mathcal{O})\|. 
\end{equation}
\end{itemize}

Concerning pointwise multiplication the following results are proven in \cite{DHS17a}. 
Note that the domains of polyhedral type considered there include our polyhedral cones $K\subset \real^3$ as well as bounded polyhedral domains $D\subset \real^d$.


{
\begin{corollary}\label{thm-pointwise-mult-2}
\begin{itemize}
\item[(i)] Let $m\in \nat$, $a\geq \frac dp$, and either $1<p<\infty$ and $m>\frac dp$ or $p=1$ and $m\geq d$. 
Then the Kondratiev space $\calk^m_{a,p}(K)$ is an algebra with respect to pointwise multiplication, i.e.,  there exists a constant $c$ such that 
\[
\|uv| \mathcal{K}^{m}_{a,p}(K)\|\leq c\|u|\mathcal{K}^{m}_{a,p}(K)\|\cdot \|v|\mathcal{K}^{m}_{a,p}(K)\|
\]
holds for all $u,v\in \mathcal{K}^{m}_{a,p}(K)$.
\item[(ii)] Let $\frac{d}{2}<p<\infty$, $m\in \nat$, and $a\geq \frac{d}{p}-1$. Then there exists a constant $c$ such that 
\[
\|uv| \mathcal{K}^{m-1}_{a-1,p}(K)\|\leq c\|u|\mathcal{K}^{m+1}_{a+1,p}(K)\|\cdot \|v|\mathcal{K}^{m-1}_{a-1,p}(K)\|
\]
holds for all $u\in \mathcal{K}^{m+1}_{a+1,p}(K)$ and $v\in \mathcal{K}^{m-1}_{a-1,p}(K)$.
\end{itemize}
\end{corollary}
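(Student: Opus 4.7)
The plan is to treat both parts of the corollary as specializations of the general pointwise-multiplier theorems for Kondratiev spaces on domains of polyhedral type proved in \cite{DHS17a}. The first step is to check that the class of domains considered there indeed covers the polyhedral cone $K \subset \real^3$ from Definition \ref{def_polyhedral_cone} (with the convention that the singular set consists of the vertex together with the edges $M_1,\dots,M_n$), so that the theorems of \cite{DHS17a} apply verbatim. Once this identification is made, parts (i) and (ii) are read off by matching the numerical hypotheses ($m > d/p$ or $m \geq d$, and $a \geq d/p$ in the algebra case; $p > d/2$ and $a \geq d/p - 1$ in the mixed case).

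For readers who prefer to see the mechanism, I would sketch the underlying argument. Apply the Leibniz rule
\[
D^{\alpha}(uv) = \sum_{\beta \leq \alpha} \binom{\alpha}{\beta} D^{\beta} u \cdot D^{\alpha-\beta} v,
\]
multiply by $\varrho^{|\alpha|-a}$, and split the weight additively as $\varrho^{|\alpha|-a} = \varrho^{|\beta|-a_1}\cdot \varrho^{|\alpha-\beta|-a_2}$ with $a_1 + a_2 = a$. For each $\beta$ one then applies Hölder's inequality with exponent pair $(\infty,p)$: the factor carrying fewer derivatives is placed in a weighted $L_\infty$ space, the other into a weighted $L_p$ space that directly contributes to the Kondratiev norm of the remaining variable. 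The crucial input is a weighted Sobolev embedding of the form
\[
\mathcal{K}^{m}_{a,p}(K) \hookrightarrow L_\infty(K),
\]
valid precisely in the regime $m > d/p$ (respectively $m \geq d$ when $p = 1$) and $a \geq d/p$ that appears in the hypothesis of part (i). This embedding on the cone is proved by decomposing $K$ into dyadic shells $\{2^{-j} \leq |x| < 2^{-j+1}\}$, further subdividing each shell into neighbourhoods of the edges versus the bulk, rescaling to a fixed reference configuration where the weight $\varrho$ becomes of unit order, and invoking the classical Sobolev embedding $W^m_p \hookrightarrow L_\infty$ on the reference piece; the choice $a \geq d/p$ is exactly what makes the resulting geometric series summable.

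For part (ii), the same Leibniz--Hölder scheme is used, but with the book-keeping shifted: the factor $v$ carries at most $m-1$ derivatives with weight parameter $a-1$, while $u$ is allowed $m+1$ derivatives with weight $a+1$. The weight accounting $|\beta|-a_1 + |\alpha-\beta|-a_2 = |\alpha|-(a-1)$ forces $a_1 + a_2 = a-1$, and one arranges the split so that $u$ always furnishes the $L_\infty$ factor through the embedding $\mathcal{K}^{m+1}_{a+1,p}(K) \hookrightarrow L_\infty(K)$. The relaxed dimensional condition $p > d/2$ (compared with $p > d/m$-type conditions in (i)) reflects the fact that one has gained one extra derivative and one extra unit of weight in the $u$-factor, which is precisely what is needed for the embedding to hold in this lower-regularity regime.

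The main obstacle is the verification of the weighted $L_\infty$-embedding on the cone, especially at the endpoint $a = d/p$ in (i) and $a = d/p - 1$ in (ii): one must ensure that constants are uniform across the dyadic shells and that the behaviour near the vertex and near the edges can be controlled simultaneously by the same weight $\varrho$. Once that embedding is available, the Leibniz--Hölder estimate and summation over $\beta$ are essentially mechanical.
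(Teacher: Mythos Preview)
Your proposal is correct and matches the paper's approach exactly: the paper does not prove this corollary but simply cites \cite{DHS17a} and remarks that the polyhedral cones $K\subset\real^3$ fall within the class of polyhedral-type domains treated there. Your additional sketch of the Leibniz--H\"older mechanism and the weighted $L_\infty$-embedding goes beyond what the paper provides, but accurately reflects the strategy of \cite{DHS17a}.
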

}

Furthermore, we shall  need  a lifting property for Kondratiev spaces. For classical Sobolev spaces by definition it is clear that 
\begin{equation}\label{lift}
u\in W^m_p\quad \Longrightarrow \quad D^{\alpha}u\in W^{m-|\alpha|}_p, 
\end{equation}
for $\alpha\in \nat_0^n$ with $|\alpha|\leq m$. For a generalization to Besov and Triebel-Lizorkin spaces we  also refer to \cite[p. 22, Prop. 2]{RS96} in this context. 
In the following theorem we will study the behaviour of 
$ \  u\rightarrow D^{\alpha} u \ $  
in Kondratiev spaces, which turns out to be very similar as for Sobolev spaces. 

\medskip 

\begin{theorem}\label{thm-lift-kondratiev}
Let  $a\in \real$, $1<p<\infty$, and $m\in \nat_0$. Then for  $u\in \mathcal{K}^m_{p,a}(K)$  and $\alpha\in \nat_0^n$ with $|\alpha|\leq m$, we have 
\[
D^{\alpha}u\in \mathcal{K}^{m-|\alpha|}_{p,a-|\alpha|}(K). 
\] 
\end{theorem}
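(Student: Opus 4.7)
The proof is a direct re-indexing computation at the level of the defining norm; no serious obstacle is expected. The plan is to expand the Kondratiev norm of $D^{\alpha}u$ and match its summands with a subset of the summands appearing in the norm of $u$.

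First I write out, by the very definition \eqref{Kondratiev-1},
\[
\|D^{\alpha}u\,|\,\mathcal{K}^{m-|\alpha|}_{p,a-|\alpha|}(K)\|^{p}
=\sum_{|\gamma|\leq m-|\alpha|}\int_{K}\varrho(x)^{p(|\gamma|-(a-|\alpha|))}\,|D^{\gamma}(D^{\alpha}u)(x)|^{p}\,\mathrm{d}x.
\]
Since the distributional derivatives commute, $D^{\gamma}D^{\alpha}u=D^{\gamma+\alpha}u$, and the weight exponent rewrites as $p(|\gamma|-(a-|\alpha|))=p(|\gamma+\alpha|-a)$. Setting $\beta:=\gamma+\alpha$, the condition $|\gamma|\leq m-|\alpha|$ becomes $|\beta|\leq m$, and, since $\gamma\in\mathbb{N}_{0}^{n}$, we have in addition $\beta\geq\alpha$ componentwise. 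Hence
\[
\|D^{\alpha}u\,|\,\mathcal{K}^{m-|\alpha|}_{p,a-|\alpha|}(K)\|^{p}
=\sum_{\substack{|\beta|\leq m\\ \beta\geq \alpha}}\int_{K}\varrho(x)^{p(|\beta|-a)}\,|D^{\beta}u(x)|^{p}\,\mathrm{d}x.
\]

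The right-hand side is a subsum of the full norm $\|u\,|\,\mathcal{K}^{m}_{p,a}(K)\|^{p}$, so
\[
\|D^{\alpha}u\,|\,\mathcal{K}^{m-|\alpha|}_{p,a-|\alpha|}(K)\|\leq \|u\,|\,\mathcal{K}^{m}_{p,a}(K)\|<\infty,
\]
which gives the claimed membership $D^{\alpha}u\in \mathcal{K}^{m-|\alpha|}_{p,a-|\alpha|}(K)$.

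The only conceptual points to be careful about are (i) that the commutation $D^{\gamma}D^{\alpha}=D^{\gamma+\alpha}$ is valid in the distributional sense on $K$ (which is immediate, since all derivatives are interpreted in $\mathcal{D}'(K)$), and (ii) that the bookkeeping of weight exponents matches exactly, namely that decreasing $a$ by $|\alpha|$ precisely compensates the loss of $|\alpha|$ derivatives. Both are essentially routine, and no delicate estimate is required; this is the Kondratiev analogue of the elementary lifting property \eqref{lift} for classical Sobolev spaces.
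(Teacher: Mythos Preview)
Your proof is correct and follows essentially the same approach as the paper: expand the Kondratiev norm of $D^{\alpha}u$, use $D^{\gamma}D^{\alpha}=D^{\gamma+\alpha}$, re-index, and observe that the resulting sum is a subsum of $\|u|\mathcal{K}^{m}_{p,a}(K)\|^{p}$. The paper's argument is in fact slightly less explicit about the subsum observation (writing $\lesssim$ where your $\leq$ with constant $1$ is valid), but the content is identical.
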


\begin{proof}
The result  follows immediately from the following observation 
\begin{align*}
\left\|D^{\alpha}u|\mathcal{K}^{m-|\alpha|}_{p,a-|\alpha|}(K)\right\|^p
& = \sum_{|\beta|\leq m-|\alpha|}\int_K \rho(x)^{p(|\beta|-(a-|\alpha|))}|D^{\beta}(D^{\alpha}u(x))|^p \ud x\\
& = \sum_{|\beta|+|\alpha|\leq m}\int_K \rho(x)^{p(|\beta|+|\alpha|-a)}|D^{\alpha+\beta}u(x)|^p\ud x\\
& \lesssim \sum_{|\gamma|\leq m}\int_K \rho(x)^{p(|\gamma|-a)}|D^{\gamma}u(x)|^p\ud x = \|u|\mathcal{K}^m_{p,a}(K)\|^p. 
\end{align*}
\end{proof}

\subsection{Besov spaces}
\label{sect-Besov}

\subsubsection{Definition of Besov spaces}

Besov spaces  can be defined in {various different} ways. {In this subsection, we start with a  definition via higher order differences as can be found in \cite{Tri83}. Moreover, we recall the fact that  under certain restrictions on the parameters the Besov spaces allow a characterization in terms of wavelet decompositions.} 
In this context we refer, e.g.  to \cite{Coh03, Mey92}.  {In particular, this wavelet characterization} will turn out to be extremely useful when proving embeddings of weighted Sobolev spaces into Besov spaces from the non-linear approximation scale \eqref{adaptivityscale}.  For further information on Besov spaces and related function spaces as well as equivalent definitions, we refer to \cite{Tri06} and the references given there.  

If $f$
is an arbitrary function on $\real^d$, $h\in\real^d$ and $r\in\nat$, then
\[
(\Delta_h^1 f)(x)=f(x+h)-f(x) \quad \text{and} \quad
(\Delta_h^{r+1} f)(x)=\Delta_h^1(\Delta_h^r f)(x)
\]
are the usual iterated differences. 
Given a function $f\in L_p(\real^d)$ the \textit{$r$-th modulus of smoothness} is defined
by
\[
\omega_r(f,t)_p=\sup_{|h|\leq t} \|\Delta_h^r f\mid L_p(\real^d)\|, \quad
t>0, \quad 0< p\leq \infty.
\]

\begin{definition}\label{defi-Besov}
Let $0<p,q\leq \infty$, $s>0$, and $r\in\nat$ such that $r>s$. Then the Besov
space $B^s_{p,q}(\real^d)$ contains all $f\in L_p(\real^d)$ such that
\[
\|f|B^s_{p,q}(\real^d)\| = \|f|L_p(\real^d)\| + \left(\int_0^1 t^{-sq} \omega_r(f,t)_p^q
\ \frac{\ud t}{t}\right)^{1/q}<\infty
\]
$($with the usual modification if $q=\infty)$.
\end{definition}

\begin{remark}{
Definition~\ref{defi-Besov} is independent of $r$, meaning that different values
of $r>s$ result in (quasi-)norms which are equivalent.  
Furthermore, the spaces are quasi-Banach spaces (Banach spaces if $p,q\geq 1$).
Note that we deal with subspaces of $L_p(\rn)$, in particular, for $s>0$ and $0<q\leq\infty$, we have the embedding
\[
 B^s_{p,q}(\real^d)\hookrightarrow L_p(\real^d),\qquad  0<p\leq \infty. 
\]}
\end{remark}

\subsubsection{Wavelet characterization of Besov spaces}

Wavelets are specific orthonormal bases for $L_2(\mathbb{R})$ that are obtained by dilating, translating and scaling one fixed function, the so--called  {\em mother wavelet} $\psi$. The mother wavelet is usually constructed by means of a so-called {\em multiresolution analysis,} that is, a sequence  $\{V_j\}_{j \in \mathbb{Z}}$  of shift-invariant, closed subspaces of $L_2(\mathbb{R})$ whose union is dense in $L_2$ while their intersection is zero. Moreover, all the spaces are related via dyadic dilation, and the space  $V_0$  is spanned  by the translates of  one fixed function $\phi$, called the {\em generator}.  In her fundamental work \cite{Dau1, Dau2}  I. Daubechies has shown that there exist families of compactly supported wavelets. By taking tensor products, a compactly supported orthonormal basis for $L_2(\mathbb{R}^d)$ can be constructed which will also be used in this paper.\\
Let ${\phi}$ be a father wavelet of tensor product type on $\real^d$ and let $\Psi'=\{\psi_i: \ i=1,\ldots, 2^d-1\}$ be the set containing the corresponding multivariate mother wavelets such that, for a given $r\in \nat$ and some $N>0$ the following localization, smoothness and vanishing moment conditions hold. For all $\psi\in \Psi'$, 
\begin{align}
\supp{\phi}, \ \supp \psi   & \ \subset \ [-N,N]^d, \label{wavelet-1}\\
{\phi}, \ \psi  & \ \in \ C^r(\real^d), \label{wavelet-2}\\
\int_{\real^d} x^{\alpha}\psi(x)\ud x & \ =\ 0 \quad \text{ for all } \alpha \in \nat_0^d \ \text{ with } \  \ |\alpha|\leq r. \label{wavelet-3}
\end{align}  
We refer again to  \cite{Dau1, Dau2} for a detailed discussion. 
The set of all dyadic cubes in $\real^d$ with measure at most $1$ is denoted by 
\[
\mathcal{D}^{+}:=\left\{I\subset \real^d: \ I=2^{-j}([0,1]^d+k), \ j\in \nat_0, \ k\in \mathbb{Z}^d\right\}
\]
and we set $\mathcal{D}_j:=\{I\in \mathcal{D}^+: \ |I|=2^{-jd}\}.$ 
For the dyadic shifts and dilations of the father wavelet and the corresponding wavelets we use the abbreviations 
\begin{equation}\label{wavelet-4}
{\phi}_k(x):={\phi}(x-k), \quad \psi_{I}(x):=2^{jd/2}\psi(2^jx-k) \qquad \text{for}\quad  j\in \nat_0, \ k\in \mathbb{Z}^d, \ \psi\in \Psi'. 
\end{equation}
It follows that 
\[
\left\{{\phi}_k, \ \psi_{I}:  \ k\in \mathbb{Z}^d, \  I\in \mathcal{D}^+, \  \psi\in \Psi'\right\}
\]
is an orthonormal basis in $L_2(\real^d)$. 
Denote by $Q(I)$ some dyadic cube (of minimal size) such that $\supp \psi_I \subset Q(I)$ for every $\psi\in \Psi'$. Then, we clearly have $Q(I)=2^{-j}k+2^{-j}Q$ for some dyadic cube $Q$. Put $\Lambda'=\mathcal{D}^{+}\times \Psi'$.  
Then, every function $f\in L_2(\real^d)$ can be written as 
\[
f=
\sum_{k\in \mathbb{Z}^d}\langle f,{{\phi}}_k\rangle {{\phi}}_k +\sum_{(I,\psi)\in \Lambda'}\langle f, {\psi}_I\rangle \psi_I.  
\]
Later on, it will be convenient to include ${\phi}$ into the set $\Psi'$. We use the notation ${\phi}_I:=0$ for $|I|<1$, ${\phi}_I={\phi}(\cdot-k)$ for $I=k+[0,1]^d$, and can simply write 
\[
f=\sum_{(I,\psi)\in \Lambda}\langle f, {\psi}_I\rangle \psi_I, \qquad \Lambda=\mathcal{D}^+\times \Psi, \quad \Psi=\Psi'\cup \{{\phi}\}.
\]

We describe Besov spaces on $\real^d$ by decay properties of the wavelet coefficients, if the parameters fulfill certain conditions.  \\

\begin{theorem}[{Wavelet decomposition of Besov spaces}]\label{thm-wavelet-dec}
Let $0<p,q<\infty$ and $s>\max\left\{0,d(1/p-1)\right\}$. Choose $r\in \nat$ such that $r>s$ and construct a wavelet Riesz basis as described above. Then a function $f\in L_p(\real^d)$ belongs to the Besov space $B^s_{p,q}(\real^d)$ if, and only if, 
\begin{equation}\label{besov-decomp}
f=\sum_{k\in \mathbb{Z}^d}\langle f,{{\phi}}_k\rangle {\phi}_k +\sum_{(I,\psi)\in \Lambda'}\langle f, {\psi}_I\rangle \psi_I  
\end{equation}
(convergence in $\mathcal{S}'(\real^d)$) with 
\begin{align}
\|f|B^s_{p,q}(\real^d)\| 
&\sim  \left(\sum_{k\in \mathbb{Z}^d} |\langle f,{{\phi}}_k\rangle|^p\right)^{1/p} + \notag\\
& \qquad   \left(\sum_{j=0}^{\infty}2^{j\left(s+d(\frac 12-\frac 1p)\right)q}\left(\sum_{(I,\psi)\in \mathcal{D}_j\times \Psi'}|\langle f, {\psi}_{I}\rangle|^p\right)^{q/p}\right)^{1/q}<\infty.\label{besov-norm}
\end{align}
\end{theorem}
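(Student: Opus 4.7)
The plan is to establish this classical wavelet characterization via the standard two-step strategy that rests on the dual role of the system $\{\phi_k,\psi_I\}$: the assumptions \eqref{wavelet-1}--\eqref{wavelet-3} with $r>s$ make the wavelets simultaneously smooth, compactly supported atoms and, via the $r$ vanishing moments, efficient analyzers of local polynomial behaviour. Using the equivalent characterization of $B^s_{p,q}(\real^d)$ by higher order differences from Definition \ref{defi-Besov}, I would prove the two inequalities $\lesssim$ and $\gtrsim$ in \eqref{besov-norm} separately.

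First, I would handle the direction ``$\Leftarrow$''. Assume that $f$ has an expansion \eqref{besov-decomp} with the right-hand side of \eqref{besov-norm} finite. Splitting $f=\sum_{j\ge 0} f_j$ where $f_j=\sum_{(I,\psi)\in \mathcal{D}_j\times \Psi'}\langle f,\psi_I\rangle\psi_I$ (with $f_{-1}$ the father-wavelet part), I would use the support/smoothness of $\psi$ to estimate
\[
\omega_r(\psi_I,t)_p \;\lesssim\; \min\bigl(1,(2^j t)^r\bigr)\cdot 2^{jd(1/2-1/p)},
\qquad I\in\mathcal{D}_j,
\]
together with the bounded-overlap property of $\{\mathrm{supp}\,\psi_I:I\in\mathcal{D}_j\}$ to pass from the $\ell^p$-sum of coefficients to the $L_p$-norm of $f_j$ and of $\Delta_h^r f_j$. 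Dyadic discretisation of the $t$-integral and summation over $j$ (splitting at $2^{-j}\sim t$ and using the $p$-triangle inequality when $p<1$) give $\|f|B^s_{p,q}(\real^d)\|\lesssim$ (right-hand side of \eqref{besov-norm}). The threshold $s>d(1/p-1)$ is exactly what is needed so that the geometric series arising for the low-frequency part converges.

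For the converse ``$\Rightarrow$'', I would exploit the vanishing moments. For $I\in\mathcal{D}_j$ and $\psi\in\Psi'$, let $P_I$ be a best $L_p$-approximation of $f$ on $Q(I)$ by polynomials of degree less than $r$. Then, since $\int x^{\alpha}\psi_I(x)\,\ud x=0$ for $|\alpha|<r$,
\[
|\langle f,\psi_I\rangle| \;=\; |\langle f-P_I,\psi_I\rangle|
\;\lesssim\; 2^{jd(1/2-1/p)}\,\|f-P_I\,|\,L_p(Q(I))\|,
\]
and Whitney's local approximation theorem bounds the right-hand side by a constant multiple of $\omega_r(f,2^{-j})_{p,Q(I)}$. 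Raising to the $p$-th power and summing over $I\in\mathcal{D}_j$ (using the finite overlap of the cubes $Q(I)$) yields
\[
\sum_{(I,\psi)\in\mathcal{D}_j\times\Psi'}|\langle f,\psi_I\rangle|^{p}
\;\lesssim\; 2^{jp\,d(1/2-1/p)}\,\omega_r(f,2^{-j})_{p}^{p}.
\]
A discretisation of $\int_0^1 t^{-sq}\omega_r(f,t)_p^q\,\ud t/t$ into dyadic pieces then converts the Besov seminorm into the sum on the right-hand side of \eqref{besov-norm}, and the coarse-scale coefficients $\langle f,\phi_k\rangle$ are controlled directly by $\|f|L_p\|$ via H\"older, since $\phi$ has compact support.

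The main obstacle I anticipate is the quasi-Banach regime $0<p<1$: the triangle inequality must be replaced throughout by the $p$-triangle inequality, and the lower bound $s>d(1/p-1)$ is needed precisely to keep the relevant series summable and to guarantee that \eqref{besov-decomp} converges in $\mathcal{S}'(\real^d)$. A secondary technical point is justifying the passage between pointwise overlap properties of wavelet supports and sequence-space norms; this is standard but must be done with care when $p<1$ by working with $\ell^p$ aggregation scale-by-scale rather than globally.
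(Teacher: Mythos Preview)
The paper does not supply a proof of Theorem~\ref{thm-wavelet-dec}; it is quoted as a classical result, with the surrounding discussion pointing to standard references such as \cite{Coh03}, \cite{Mey92}, and \cite{Tri06}. So there is no ``paper's own proof'' to compare against.

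That said, your outline is the standard and correct strategy for establishing this characterization, and it would produce a valid proof. The ``$\Leftarrow$'' direction is essentially the atomic/molecular decomposition argument: the functions $2^{-jd(1/2-1/p)}\psi_I$ are $(s,p)$-atoms once $r>s$, and the restriction $s>\max\{0,d(1/p-1)\}$ is precisely what makes the series in \eqref{besov-decomp} converge in $\mathcal{S}'(\real^d)$ and the resulting $L_p$- and modulus-estimates summable. The ``$\Rightarrow$'' direction via vanishing moments, local polynomial subtraction, and Whitney's inequality is exactly how one extracts coefficient decay from membership in $B^s_{p,q}$; summing over the finitely overlapping cubes $Q(I)$ and discretising the modulus integral dyadically is the right mechanism. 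Your identification of the quasi-Banach case $p<1$ as the delicate point is accurate: one must work with the $p$-triangle inequality level-by-level, and the condition $s>d(1/p-1)$ is sharp for convergence. Nothing in your sketch is wrong or missing; it matches the arguments one finds, for instance, in Triebel's books or in the original work of Frazier--Jawerth and DeVore--Jawerth--Popov.
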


{
\begin{remark}{
\begin{itemize}
\item[(i)] For parameters $q=\infty$ we  use the usual modification (replacing the outer sum by a supremum), i.e., 
\begin{align*}
\|f|B^s_{p,\infty}(\real^d)\|
&\sim \left(\sum_{k\in \mathbb{Z}^d} |\langle f,{{\phi}}_k\rangle|^p\right)^{1/p}+ \notag\\ 
& \qquad \sup_{j\geq 0}2^{j\left(s+d(\frac 12-\frac 1p)\right)}\left(\sum_{(I,\psi)\in \mathcal{D}_j\times \Psi'}|\langle f, {\psi}_{I}\rangle|^p\right)^{1/p}<\infty.  
\end{align*}
\item[(ii)] In particular, for our adaptivity scale \eqref{adaptivityscale}, i.e., $B^s_{\tau,\tau}(\real^d)$ with $s=d\left(\frac{1}{\tau}-\frac 1p\right)$,  we see that the norm  \eqref{besov-norm} becomes 
\begin{align}
\|f|B^s_{\tau,\tau}(\real^d)\|&\sim  \left(\sum_{k\in \mathbb{Z}^d} |\langle f,{{\phi}}_k\rangle|^{\tau}\right)^{1/\tau} +   \notag\\ 
 & \qquad  \left(\sum_{j=0}^{\infty}2^{jd\left(\frac 12-\frac 1p\right)\tau}\sum_{(I,\psi)\in \mathcal{D}_j\times \Psi'}|\langle f, {\psi}_{I}\rangle|^{\tau}\right)^{1/\tau}. \label{besov-norm2}
\end{align}
\end{itemize}
}
\end{remark}
}

Corresponding function spaces on domains can be introduced via restriction, i.e., 
\begin{eqnarray*}
B^s_{p,q}(\mathcal{O})&=& \left\{f\in \mathcal{D}'(\mathcal{O}): \ \exists g\in B^s_{p,q}(\real^d), \ g\big|_{\mathcal{O}}=f \right\},\\
\|f|B^s_{p,q}(\mathcal{O})\|&=& \inf_{g|_{\mathcal{O}}=f}\|f|B^s_{p,q}(\real^d)\|. 
\end{eqnarray*}
Alternative (different or equivalent) versions of this definition can be found, depending on possible additional properties of the distributions $g$ (most often their support). We refer to \cite{Tri08} for details and references.

\section{Regularity results in weighted Sobolev spaces}\label{Sect-3}

\subsection{Parabolic regularity results}


\subsubsection{The fundamental problem}

Let $m\in \nat$. We consider the following first initial-boundary value problem 

\begin{equation} \label{parab-1a}
\left\{\begin{array}{rl}
\partt u+(-1)^m{L(x,t;D_x)}u\ =\ f \, &  \text{ in } K_T, \\
\frac{\partial^{k-1}u}{\partial \nu^{k-1}}\Big|_{\Gamma_{j,T}}\ =\ 0, & \   k=1,\ldots, m, \ j=1,\ldots, n,\\ 
u\big|_{t=0}\ =\ 0 \, & \text{ in } K.
\end{array} \right\}
\end{equation}

Here {$f$ is a function given on $K_T$, $\nu$ denotes the exterior normal to $\Gamma_{j,T}$}, and  the partial differential operator $L$ is given by
\[{L(x,t;D_x)}=\sum_{|\alpha|, |\beta|=0}^m D^{\alpha}_x({a_{\alpha \beta}(x,t)}D^{\beta}_x),\]
where $a_{\alpha \beta}$ are bounded real-valued functions from $C^{\infty}(K_T)$ with  $a_{\alpha \beta}=(-1)^{|\alpha|+|\beta|}{a}_{\beta \alpha}$. 
Furthermore, the operator $L$ is assumed to be strongly elliptic {{ uniformly with respect to $t\in (0,T)$}}, i.e., 
\begin{equation}\label{operator_L}
\sum_{|\alpha|, |\beta|=m}a_{\alpha \beta}\xi^{\alpha}\xi^{\beta}\geq c|\xi|^{2m} \qquad {\text{for all}}\quad  (x,t)\in K_T, \quad \xi\in \mathbb{R}^d.
\end{equation}

Moreover, a function $u\in {\mathring{W}}_p^{m,1}(K_T)$ is called a {\em generalized solution} of problem \eqref{parab-1a} if, and only if, $u(x,0)=0$ for all $x\in K$ and the equality 
\[
\left(\partial_t u, v\right)+(-1)^m \left({L(x,t;D_x)}u,v\right)=\left(f,v\right)\quad \text{a.e. } t\in [0,T],
\]
holds for all $v\in \mathring{W}^m_2(K)$. 


Concerning the Sobolev regularity of the generalized solution of problem \eqref{parab-1a} the following result may be found in \cite[Thm.~2.1., L.~3.1]{LL15}.

\begin{proposition}\label{GenSol_Sobolev}
Let $l\in \nat_0$ and assume that 
\begin{itemize}
\item[(i)] {$\sup \{|\partial_{t^k}a_{\alpha \beta}(x,t)|: \ (x,t)\in K_T,\  k\leq l+1\}<\infty$. }
\item[(ii)] $f\in W^l_2((0,T), L_2(K))$, \ $\partial_{t^k} f(x,0)=0$, \ $0\leq k\leq l-1$
\end{itemize}
{(where the second condition in (ii) is not applicable if $l=0$). Then  problem \eqref{parab-1a} has a unique  generalized solution  $u\in \mathring{W}_2^{m,l+1}(K_T)$ with a priori estimate 
\begin{equation}\label{form_gen_sol_sobolev}
\|u\|_{W^{m,l+1}_{2}(K_T)}\lesssim \|f\|_{W^l_2((0,T),L_2(K))}. 
\end{equation}
}
\end{proposition}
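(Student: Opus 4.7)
The plan is to prove this by the standard Galerkin method combined with an induction on $l$ to obtain higher regularity in the time variable. This is a classical parabolic existence/uniqueness/regularity argument adapted to the nonsmooth setting, but the key point is that since we only ask for spatial smoothness of order $m$ (i.e., in $\mathring{W}_2^m(K)$), no sophisticated theory for corner/edge singularities is needed here — the ellipticity on $\mathring{W}_2^m$ through a G{\aa}rding-type inequality is enough.

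First I would treat the base case $l=0$. Fix a basis $\{w_k\}_{k\in\nat}$ of $\mathring{W}^m_2(K)$ that is orthonormal in $L_2(K)$, and look for Galerkin approximations $u_N(x,t)=\sum_{k=1}^N c_k^N(t)w_k(x)$ satisfying
\[
(\partial_t u_N,w_j)+(-1)^m(L(x,t;D_x)u_N,w_j)=(f,w_j),\qquad j=1,\ldots,N,
\]
with $c_k^N(0)=0$. This is a linear ODE system with time-dependent, bounded coefficients, which admits a unique global solution on $[0,T]$. Testing with $u_N$ itself, using self-adjointness $a_{\alpha\beta}=(-1)^{|\alpha|+|\beta|}a_{\beta\alpha}$, the strong ellipticity condition~\eqref{operator_L}, G{\aa}rding's inequality and Gronwall's lemma, I obtain the uniform estimate
\[
\|u_N\|_{L_\infty((0,T),L_2(K))}^2+\|u_N\|_{L_2((0,T),W^m_2(K))}^2\lesssim \|f\|_{L_2(K_T)}^2.
\]
From the equation itself one then bounds $\partial_t u_N$ in $L_2((0,T),W^{-m}_2(K))$, and weak compactness plus a standard subsequence argument yields a limit $u\in\mathring{W}_2^{m,1}(K_T)$ satisfying the weak formulation with $u(\cdot,0)=0$. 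Uniqueness follows from the same energy estimate applied to the difference of two solutions.

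Next I would handle $l\geq 1$ by induction. Formally differentiating the PDE $k$ times in $t$ and writing $v_k:=\partial_{t^k}u$, one obtains
\[
\partial_t v_k+(-1)^m L(x,t;D_x)v_k=\partial_{t^k}f-(-1)^m\sum_{j=1}^{k}\binom{k}{j}(\partial_{t^j}L)v_{k-j},
\]
together with the crucial initial condition $v_k(\cdot,0)=0$: for $k\leq l$ the compatibility assumption $\partial_{t^j}f(\cdot,0)=0$ for $j\le l-1$ combined with $u(\cdot,0)=0$ propagates, via the equation, to $\partial_{t^k}u(\cdot,0)=0$. One then reruns the Galerkin/energy argument at each level, absorbing the lower-order terms using the induction hypothesis and the boundedness of $\partial_{t^j}a_{\alpha\beta}$ up to $j\leq l+1$, to get
\[
\|v_k\|_{W^{m,1}_2(K_T)}\lesssim\|f\|_{W^l_2((0,T),L_2(K))},\qquad k=0,1,\ldots,l,
\]
which, assembled together, gives exactly $u\in\mathring{W}_2^{m,l+1}(K_T)$ with the stated a priori bound~\eqref{form_gen_sol_sobolev}.

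The main obstacle I anticipate is the rigorous justification that the formal time-differentiation is legitimate at the Galerkin level (one needs $\partial_{t^k} u_N(\cdot,0)=0$ to hold, which in turn requires the compatibility conditions on $f$ together with the fact that the Galerkin ODE is solved exactly), and the careful bookkeeping of the cross terms $(\partial_{t^j}L)v_{k-j}$ so that the final constant in the estimate does not depend on $N$. Everything else — coercivity, Gronwall, weak compactness, lower-semicontinuity of norms — is by now routine and produces the conclusion once the inductive energy estimates are in place.
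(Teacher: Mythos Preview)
Your Galerkin-plus-induction strategy is the standard and correct approach for this kind of parabolic existence/regularity result, and the outline you give (energy estimates, G{\aa}rding, Gronwall, weak compactness, then differentiating in time and using the compatibility conditions $\partial_{t^k}f(\cdot,0)=0$ to propagate zero initial data to $\partial_{t^k}u$) is essentially right.

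However, you should be aware that the paper does \emph{not} prove this proposition at all: it is simply quoted from \cite[Thm.~2.1, L.~3.1]{LL15}. So there is no ``paper's own proof'' to compare against. Your proposal is presumably close to what the cited reference does, since the Galerkin method is the canonical route here; but if your goal was to reproduce the paper's argument, there is nothing to reproduce beyond the citation.

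One small technical remark on your sketch: in the base case $l=0$ you write that one bounds $\partial_t u_N$ in $L_2((0,T),W^{-m}_2(K))$, but the target space $\mathring{W}^{m,1}_2(K_T)=L_2((0,T),W^m_2(K))\cap W^1_2((0,T),L_2(K))$ actually requires $\partial_t u\in L_2((0,T),L_2(K))$, not just the dual space. To get this you need to test the Galerkin system with $\partial_t u_N$ (not just $u_N$), use the symmetry of the bilinear form to write $(-1)^m(Lu_N,\partial_t u_N)$ as a total time derivative plus lower-order terms controlled by assumption (i), and then Gronwall again. This is routine but should be mentioned, since the $L_2$-in-time bound on $\partial_t u$ is what the space $W^{m,l+1}_2(K_T)$ demands.
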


\subsubsection{Operator pencils} 

In order to state the global regularity results in weighted Sobolev spaces for our parabolic problem  we need to  define operator pencils generated by the Dirichlet problem for elliptic equations in the cone ${K}$. {Let us recall the basic facts, further informations on this subject may be found in  \cite[Sect.~2.3, Sect.~3.2.]{MR10}.} Let $M_k$ be an edge of the cone ${K}$, and let $\Gamma_{k_{\pm}}$ be the faces adjacent to $M_k$. Then by $\mathcal{D}_k$ we denote the dihedron which is bounded by the half-planes $\mathring{\Gamma}_{k_{\pm}}$ tangent to $\Gamma_{k_{\pm}}$ at $M_k$. Let $r,\varphi$ be polar coordinates in the plane perpendicular to $M_k$ such that 
\[
\mathring{\Gamma}_{k_{\pm}}=\left\{x\in \real^3: \ r>0, \ \varphi=\pm \frac{\theta_k}{2}\right\}.
\]
Fixing $t\in [0,T]$, we define the operator $A_k(\lambda,t)$ as follows: 
\[
A_k(\lambda,t)U=r^{2m-\lambda}L^{0}(0,t,D_x)(r^{\lambda}U), 
\]
where $u(x)=r^{\lambda}U(\varphi)$,  $\lambda\in \mathbb{C}$,  {$U$ is a function on $I_k:=\left(\frac{-\theta_k}{2}, \frac{\theta_k}{2}\right)$,}  and 
\[
L^{0}(0,t,D_x)=\sum_{|\alpha|=|\beta|=m}D^{\alpha}_x(a_{\alpha\beta}(0,t)D_x^{\beta}). 
\] 
The operator $A_k(\lambda,t)$ realizes a continuous mapping 
\[
W^{2m}_2(I_k)\cap \mathring{W}^m_2(I_k)\rightarrow L_2(I_k),
\]
for every $\lambda\in \mathbb{C}$. A complex number $\lambda_0$ is called an eigenvalue of the pencil $A_k(\lambda,t)$ if there exists  a nonzero function $U\in W^{2m}_2(I_k)\cap \mathring{W}^m_2(I_k)$ such that $A_k(\lambda_0,t)U=0$. We denote by {$\delta_{\pm}^{(k)}(t)$} the greatest positive real numbers such that the strip 
\[
m-1-\delta_{-}^{(k)}(t)<\mathrm{Re}\lambda<m-1+\delta_{+}^{(k)}(t)
\]
is free of eigenvalues of the pencil $A_k(\lambda,t)$. Furthermore, we put 
\[
{\delta_{\pm}^{(k)}}=\inf_{t\in [0,T]}{\delta_{\pm}^{(k)}(t)}, \qquad k=1,\ldots, n. 
\] 
Moreover, we introduce spherical coordinates $\rho=|x|$, $\omega=\frac{x}{|x|}$ in {$K$} and define 
\begin{equation}\label{op-pencil}
\mathfrak{U}(\lambda,t)U=\rho^{2m-\lambda}L^{0}(0,t,D_x)(\rho^{\lambda}U),
\end{equation}
where $u(x)=\rho^{\lambda}U(\omega)$ {and $U$ is a function on $\Omega=\{\omega:\ \omega=\frac{x}{|x|}, \ x\in K\}$}. 
The operator $\mathfrak{U}(\lambda,t)$ realizes a continuous mapping 
\[
W^{2m}_2(\Omega)\cap \mathring{W}_2^{m}(\Omega)\rightarrow L_2(\Omega).
\]
An eigenvalue of  $\mathfrak{U}(\lambda,t)$ is a complex number $\lambda_0$ such that $\mathfrak{U}(\lambda_0,t)U=0$ for some nonzero function $U\in W^{2m}_2(\Omega)\cap \mathring{W}_2^{m}(\Omega)$. 

{For the considerations concerning regularity that will be presented in the next subsection, we  need the following technical assumptions. }

\begin{assumption}\label{assumptions} 
Consider the operator pencil $\mathfrak{U}(\lambda,t)$, $t\in [0,T]$, from \eqref{op-pencil}. {For two  Kondratiev spaces $\calk^{\gamma}_{p,b}(K)$ and $\calk^{\gamma'}_{p,b'}(K)$  as  defined in \eqref{Kondratiev-1} with weight parameters $b,b'\in \real$,  we  assume that the closed strip between the lines $\mathrm{Re}\lambda=b+2m-\frac 32$ and $\mathrm{Re}\lambda=b'+2m-\frac 32$ does not contain eigenvalues of    $\mathfrak{U}(\lambda,t)$. Moreover, $b$ and $b'$ satisfy}
{\begin{equation}\label{restr-1-a}
-\delta_-^{(k)}<b+m<\delta_{+}^{(k)}, \qquad -\delta_-^{(k)}<b'+m<\delta_{+}^{(k)}, \quad k=1,\ldots, n.
\end{equation}}
\end{assumption}

{\begin{rem}
{
Some remarks concerning Assumption \ref{assumptions}  seem to be in order. The values of $\delta_{\pm}^{(k)}$  determine the range of $b$ and $b'$. Let us assume that the operator $L$ from \eqref{operator_L} does not depend on the time $t$.  Then the results mentioned in \cite[p.~1]{Koz91} indicate that  the strip  $|\mathrm{Re}\lambda-m+1|\leq \frac 12$ contains no eigenvalues of $A_k(\lambda)$. Thus, we obtain  
$ 
m-\frac 32\leq \mathrm{Re}\lambda \leq m-\frac 12, 
$ 
i.e., it follows that $\delta_{\pm}^{(k)}= \frac 12$ which yields 
\[
-\frac 12<b+m<\frac 12. 
\]
If we additionally assume that our polyhedral cone $K$ is convex, i.e., $\theta_k\in (0,\pi)$, then the above results can be improved. As is stated in  \cite[p.~1]{Koz91} in this case we even know that the strip  $|\mathrm{Re}\lambda-m+1|\leq 1$ does not contain eigenvalues of $A_k(\lambda)$. Hence, 
$
-2+m\leq \mathrm{Re}\lambda\leq m
$ 
yielding $\delta^{(k)}_{\pm}=1$ and 
\[
-1<b+m<1. 
\]
In particular, for $m=1$ we see that $b\in (-2,0)$. 
Finally, let us  recall the example given in \cite[p.403]{LL15} where the heat equation ($L=\Delta$ and  $m=1$) is considered. In good agreement which the results above one knows  precisely in this case that   $\delta^{(k)}_{\pm}=\frac{\pi}{\theta_k}$ (i.e., $\delta_{\pm}\geq 1$ for angles $\theta_k\in (0,\pi)$ and worst case $\delta_{\pm}^{(k)}=\frac 12$ if $\theta_k=2\pi$). Therefore, \eqref{restr-1-a} now reads as 
\[
-\frac{\pi}{\theta_k}<{b}+1<\frac{\pi}{\theta_k}. 
\]
Moreover, we additionally require in Assumption \ref{assumptions} that the closed strip between the lines $\mathrm{Re}\lambda=b+2m-\frac 32$ and $\mathrm{Re}\lambda=b'+2m-\frac 32$  does not contain eigenvalues of the operator pencil $\mathfrak{A}(\lambda)$. 
Later on, in Theorem \ref{thm-weighted-sob-reg} we choose $b=a+2m\gamma_m$ and $b'=-m$, leading to the condition that  the strip  
\[
\left[m-\frac 32, a+2m(\gamma_m+1)-\frac 32\right], \qquad -m\leq a\leq m, 
\] 
is free of eigenvalues. We see that if the spectrum is real and discrete without cluster points,
then for $\gamma_m=0$ it is always possible to find some $a\in [-m,-m+\varepsilon)$, $\varepsilon>0$, satisfying our condition as long as $m-\frac 32$ is not an eigenvalue of $\mathfrak{A}(\lambda)$.  However, if we look at our nonlinear results established in  Theorem \ref{nonlin-B-reg1} the situation becomes more delicate. There we have the additional restrictions  $\gamma_m\geq 1$, $m\geq 2$, and $m\geq a\geq -\frac 12$. This gives for $\gamma_m=1$ and $m=2$ the condition that the strip  
\[
\left[\frac 12, a+\frac {13}{2}\right], \qquad -\frac 12\leq  a\leq 2, 
\]
is free of eigenvalues. Let us finally turn our attention once again to  the heat equation. In this case  the eigenvalues of $\mathfrak{A}(\lambda)$ are given by 
\[
\lambda^{\pm}=-\frac 12\pm \sqrt{\Lambda+\frac 14},
\] 
where $\Lambda$ denote the eigenvalues of the Laplace-Beltrami operator on $\Omega=K\cap S^2$. It is well known that the spectrum of the Laplace-Beltrami operator is a countable set of positive eigenvalues, cf. \cite[Sect.~2.2.1]{KMR01}. Hence, the interval $[-1,0]$ is free of eigenvalues of the pencils $\mathfrak{A}(\lambda)$. 
We denote the smallest positive eigenvalue of the $\mathfrak{A}(\lambda)$ by $\lambda_1^{+}$. In general it is not possible to determine precise values for $\lambda_1^+$ and arbitrary $\Omega$. 
 But for the special case that $K$ is a smooth cone with  opening angle $\theta_0$ and $\Omega=\Omega_{\theta_0}$ a spherical cap, in  \cite[Fig.~7]{DS18}  one finds different values of $\lambda_1^+$ depending on $\theta_0\in (0,\pi)$. 
In particular, it can be seen that for small $\theta_0$ the eigenvalues may become quite large, e.g. we have $\lambda_1^+=1$ for $\theta_0=90^{\circ}$ and  even $\lambda_1^+>27$ for angle $\theta_0=5^{\circ}$. \\
}
\end{rem}
}

\subsubsection{Regularity results in weighted Sobolev spaces}  

Concerning weighted Sobolev regularity for the  parabolic problem  \eqref{parab-1a} first fundamental results can be found in 
{ \cite[Thms.~3.3,~3.4]{LL15}, which form the starting point for our investigations. However, for our purposes we slightly modify the results according to our needs and give a detailed proof. In particular, we obtain an {\em a priori} estimate for  the derivatives of the solution $u$ within our scale of Kondratiev spaces, which in turn is needed in Theorem \ref{nonlin-B-reg1} for proving  the existence of a solution of the nonlinear problem \eqref{parab-nonlin-1} in weighted Sobolev spaces. 
}

{For our considerations below we rely on known results for elliptic equations. Therefore, we now }
{
consider the following Dirichlet problem for elliptic equations 
\begin{equation}
\begin{cases}Lu=F& \text{on}\quad  {K},\\
\frac{\partial^k u}{\partial \nu^k}\big|_{\Gamma_j}=0, & k=1,\ldots, m, \ j=1,\ldots, n. 
\end{cases}\qquad \label{ellipt-pde}\end{equation}
The following lemma on the regularity of solutions to elliptic boundary value problems in domains of polyhedral type is taken from \cite[Cor.~4.1.10, Thm.~4.1.11]{MR10}. We rewrite it for  our scale of Kondratiev spaces.  
}

{
\begin{lemma}[Weighted Sobolev regularity for elliptic PDEs] \label{mazja_ross}\hfill  \\
Let $u\in \mathcal{K}^{\gamma}_{2,a+2m}(K)$ be a solution of \eqref{ellipt-pde}, where 
\[
F\in \mathcal{K}^{\gamma-2m}_{2,a}(K)\cap \mathcal{K}^{\gamma'-2m}_{2,a'}(K), \qquad \gamma\geq m, \quad \gamma'\geq m. 
\] 
{Suppose that $\mathcal{K}^{\gamma}_{2,a}(K)$ and $\mathcal{K}^{\gamma'}_{2,a'}(K)$ satisfy Assumption \ref{assumptions}.} 
 Then $u\in \mathcal{K}^{\gamma'}_{2,a'+2m}(K)$ and 
 \[
 \|u|\mathcal{K}^{\gamma'}_{2,a'+2m}(K)\|\leq C\|F|\mathcal{K}^{\gamma'-2m}_{2,a'}(K)\|, 
 \] 
where $C$ is a constant independent of $u$ and $F$. 
\end{lemma}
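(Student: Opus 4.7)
The plan is to reduce the claim to the elliptic shift theorem for weighted Sobolev spaces on polyhedral cones developed in \cite{MR10}: once $u$ is a weak solution in $\mathcal{K}^{\gamma}_{2,a+2m}(K)$ and $F$ has higher regularity in the primed scale, the absence of eigenvalues of $\mathfrak{U}(\lambda,t)$ in the closed strip between $\mathrm{Re}\,\lambda=a+2m-\tfrac{3}{2}$ and $\mathrm{Re}\,\lambda=a'+2m-\tfrac{3}{2}$ will force $u$ into $\mathcal{K}^{\gamma'}_{2,a'+2m}(K)$. The argument proceeds by localization via a smooth partition of unity subordinate to a cover of $K$ by a conical neighbourhood of the vertex, tubular neighbourhoods of the edges $M_k$, and an interior piece bounded away from the singular set. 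The multiplier property \eqref{multiplier} guarantees that localizing $u$ with such cut-offs stays within the Kondratiev scale, and on the interior piece classical elliptic regularity delivers the claim.

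On a neighbourhood of an edge $M_k$ the dihedral pencil $A_k(\lambda,t)$ governs the local structure, and the condition $-\delta_-^{(k)}<b+m<\delta_+^{(k)}$ for $b\in\{a,a'\}$, built into Assumption \ref{assumptions}, is exactly what the edge shift theorem requires to lift the regularity from order $\gamma$ to order $\gamma'$ and adjust the edge weight from $a$ to $a'$. Near the vertex, the core step is a Mellin transform in $\rho=|x|$: multiplying $Lu=F$ by $\rho^{2m}$ and transforming, the equation becomes the parametric boundary-value problem
\[
\mathfrak{U}(\lambda,t)\,U(\lambda,\cdot)=\widetilde{F}(\lambda,\cdot)\quad\text{on}\quad\Omega=K\cap S^2,
\]
with the angular part subject to homogeneous Dirichlet data. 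By Parseval's identity for the Mellin transform, membership of $u$ in $\mathcal{K}^{\gamma}_{2,b+2m}(K)$ is equivalent to square-integrability of $U$ along the line $\mathrm{Re}\,\lambda=b+2m-\tfrac{3}{2}$ in an appropriate angular $W^{2m}_2(\Omega)$-norm weighted by $(1+|\lambda|)^{\gamma}$.

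The key step is then a contour shift from $\mathrm{Re}\,\lambda=a+2m-\tfrac{3}{2}$ to $\mathrm{Re}\,\lambda=a'+2m-\tfrac{3}{2}$: Assumption \ref{assumptions} ensures that $\mathfrak{U}(\lambda,t)^{-1}$ is holomorphic in the closed strip between these two lines, so no residues are picked up, and the transform of $u$ remains square-integrable on the new line with the gain in the differentiation index coming from the stronger hypothesis $F\in\mathcal{K}^{\gamma'-2m}_{2,a'}(K)$. Reassembling the vertex, edge, and interior pieces and using that the smooth distance $\varrho(x)$ to the singular set is equivalent to the product of the distance to the vertex and the distances to the edges, one obtains $u\in\mathcal{K}^{\gamma'}_{2,a'+2m}(K)$; the parameter-dependent bound on $\mathfrak{U}(\lambda,t)^{-1}$ uniform in $t\in[0,T]$ yields the $t$-independent constant in the a priori estimate. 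The main obstacle is the contour-shifting step: justifying closure of the contour requires the standard parameter-dependent elliptic estimates for $\mathfrak{U}(\lambda,t)$ giving decay of $U(\lambda,\cdot)$ as $|\mathrm{Im}\,\lambda|\to\infty$ uniformly within the strip, together with careful bookkeeping of the cut-off weights so that the spherical weight $\varrho(x)^{|\alpha|-a'}$ is faithfully reproduced after reglueing the local pieces.
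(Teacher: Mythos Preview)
The paper does not give its own proof of this lemma: immediately before the statement it says ``The following lemma \ldots\ is taken from \cite[Cor.~4.1.10, Thm.~4.1.11]{MR10}. We rewrite it for our scale of Kondratiev spaces.'' In other words, Lemma~\ref{mazja_ross} is quoted as a known result from Maz'ya--Rossmann, not proved in the paper.

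Your sketch is not wrong; it is in fact a reasonable outline of the machinery behind \cite[Thm.~4.1.11]{MR10}: localize to vertex/edge/interior pieces, use the Mellin transform in $\rho=|x|$ to convert the vertex problem into the parameter-dependent family $\mathfrak{U}(\lambda,t)$ on $\Omega$, and shift the integration contour from $\mathrm{Re}\,\lambda=a+2m-\tfrac32$ to $\mathrm{Re}\,\lambda=a'+2m-\tfrac32$, which is legitimate precisely because Assumption~\ref{assumptions} clears the strip of eigenvalues. The edge condition \eqref{restr-1-a} plays the analogous role for the dihedral pencils $A_k(\lambda,t)$. So your approach is essentially the one underlying the cited reference; the paper simply invokes that reference rather than reproducing the argument. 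One small point: your write-up emphasizes the $t$-dependence and a ``$t$-independent constant,'' but in the elliptic lemma as stated here $t$ is a fixed parameter (the coefficients $a_{\alpha\beta}(x,t)$ are frozen), so the relevant uniformity in $t$ is handled downstream in Theorems~\ref{thm-weighted-sob-reg} and~\ref{thm-weighted-sob-reg-2}, not in this lemma itself.
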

}

{
\begin{rem}\label{rem_mazja_ross_ell}
In particular, we use the fact that from Prop. \ref{GenSol_Sobolev} we already know that if for $k=0,\ldots, l+1$
\begin{equation}\label{ell_1}
\partial_{t^k}f(t) \in L_2(K)=\mathcal{K}^0_{2,0}(K) \hookrightarrow \mathcal{K}^{-m}_{2,-m}(K), 
\end{equation}
where the latter embedding follows from the corresponding duality assertion, i.e., we have  
$ 
\mathcal{K}^m_{2,m}(K) \hookrightarrow \mathcal{K}^{0}_{2,0}(K)
$  
since $m\geq 0$, then the solution $u$ of  \eqref{parab-1a} satisfies 
\begin{equation}\label{ell_2}
\partial_{t^k}u(t)\in \accentset{\circ}{W}^m_2(K)\hookrightarrow \accentset{\circ}{\mathcal{K}}^m_{2,m}(K)\hookrightarrow \mathcal{K}^0_{2,a}(K), \qquad {a\leq m}, 
\end{equation}
where the first embedding is taken from \cite[Lem.~3.1.6]{MR10} and the second embedding for Kondratiev spaces holds whenever 
$m\geq a$. 
{We additionally require in our later considerations that 
\begin{equation}\label{ell_3}
\partial_{t^k}u(t)\in \calk^0_{2,a}(K)\hookrightarrow \calk^{-m}_{2,-m}(K), 
\end{equation}
which holds for $a\geq -m$. 
}
From \eqref{ell_1} and \eqref{ell_2} we see that it is possible to take $\gamma=m$ and $a=-m$ in  Lemma \ref{mazja_ross}, {
i.e., if  $f(t)\in \calk^{-m}_{2,-m}(K)$ then $u(t)\in \calk^m_{2,m}(K)$. {Note that all our arguments with $u(t)$ and $f(t)$, respectively, hold for a.e. $t\in [0,T]$. However, since lower order time derivatives are continuous w.r.t. suitable spaces (but not necessarily the highest one, cf. the proof of Thm. \ref{Hoelder-Besov-reg}), we will suppress this distinction in the sequel.}
}
\end{rem}
}

{
Using similar arguments as in \cite[Thm.~3.3]{LL15} we are now able to show the following regularity result in Kondratiev spaces.} 

{
\begin{theorem}[Weighted Sobolev regularity I]\label{thm-weighted-sob-reg}
Let $K\subset \real^3$ be a polyhedral cone. Let $\gamma\in \nat $ with  $\gamma\geq 2m$ and put $\gamma_m:=\lfloor \frac{\gamma-1}{2m}\rfloor$. Furthermore, let  $a\in \real$ with   ${a\in [-m,m]}$.  Assume that the right hand side $f$ of \eqref{parab-1a} satisfies 
\begin{itemize}
\item[(i)] $\partial_{t^k} f\in L_2(K_T)\cap L_2((0,T),\mathcal{K}^{2m(\gamma_m-k)}_{2,a+2m(\gamma_m-k)}(K))$, \ $k=0,\ldots, \gamma_m$, \\ and 
$\partial_{t^{\gamma_m+1}} f\in L_2(K_T)$. 
\item[(ii)] $\partial_{t^k} f(x,0)=0$, \quad  $k=0,1,\ldots, {\gamma_m}.$
\end{itemize}
{Furthermore, let  Assumption \ref{assumptions}  hold for weight parameters $b=a+2m(\gamma_m-i)$, where $i=0,\ldots, \gamma_m$, and  $b'=-m$.}
Then for the generalized solution $u\in {\mathring{W}}_2^{m,\gamma_m+2}(K_T)$ of problem \eqref{parab-1a} we have 
$$\partial_{t^{l+1}} u\in L_2((0,T),\mathcal{K}^{2m(\gamma_m-l)}_{2,a+2m(\gamma_m-l)}(K))$$ for $l=-1,0,\ldots, \gamma_m$. In particular, for the derivatives $\partial_{t^{l+1}} u$ up to order $\gamma_m+1$ we have the a priori estimate 
\begin{align}
\sum_{l=-1}^{\gamma_m}& \|\partial_{t^{l+1}} u|{L_2((0,T),\mathcal{K}^{2m(\gamma_m-l)}_{2,a+2m(\gamma_m-l)}(K))}\|\notag\\
&\lesssim  \sum_{k=0}^{\gamma_m}\|\partial_{t^k} f|{L_2((0,T), \mathcal{K}^{2m(\gamma_m-k)}_{2,a+2m(\gamma_m-k)}(K))}\|+\sum_{k=0}^{\gamma_m+1}\|\partial_{t^k} f|{L_2(K_T)}\|,\label{weighted-sobolev-est}
\end{align}
where the  constant is  independent of $u$ and $f$. 
\end{theorem}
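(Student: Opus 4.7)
The plan is to combine the Sobolev-in-time regularity of Proposition \ref{GenSol_Sobolev} with the elliptic regularity Lemma \ref{mazja_ross}, trading one time derivative for $2m$ orders of spatial Kondratiev regularity at each induction step.

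First I would apply Proposition \ref{GenSol_Sobolev} with time order $l = \gamma_m + 1$; assumptions (i) and (ii) of the present theorem supply the required $f \in W^{\gamma_m+1}_2((0,T),L_2(K))$ together with the vanishing time traces, so the generalized solution satisfies $u \in \mathring{W}_2^{m,\gamma_m+2}(K_T)$ with
\[
\sum_{k=0}^{\gamma_m+1}\|\partial_{t^k}u\|_{L_2((0,T),W^m_2(K))} + \|\partial_{t^{\gamma_m+2}}u\|_{L_2(K_T)} \;\lesssim\; \sum_{k=0}^{\gamma_m+1}\|\partial_{t^k}f\|_{L_2(K_T)}.
\]
Combining this with the Hardy-type chain $\mathring{W}^m_2(K) \hookrightarrow \mathring{\mathcal{K}}^m_{2,m}(K) \hookrightarrow \mathcal{K}^0_{2,a}(K)$ from Remark \ref{rem_mazja_ross_ell} (valid because $a\le m$) then yields $\partial_{t^k}u \in L_2((0,T),\mathcal{K}^0_{2,a}(K))$ for all $k=0,\ldots,\gamma_m+1$; in particular the case $l = \gamma_m$ of the theorem is already established.

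The remaining cases $l = \gamma_m - 1, \gamma_m - 2, \ldots, -1$ will be treated by backward induction on $l$. For fixed $l$, differentiating \eqref{parab-1a} $(l+1)$ times in $t$ via Leibniz' rule shows that $w := \partial_{t^{l+1}} u$ satisfies, for a.e.\ $t \in (0,T)$, the elliptic Dirichlet problem
\begin{equation*}
(-1)^m L(x,t;D_x)\, w \;=\; g_l \;:=\; \partial_{t^{l+1}}f - \partial_{t^{l+2}}u - (-1)^m \sum_{k=1}^{l+1}\binom{l+1}{k}\bigl[\partial_{t^k}L\bigr]\,\partial_{t^{l+1-k}}u,
\end{equation*}
where each $\partial_{t^k}L$ has bounded smooth coefficients. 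By hypothesis (i) and the inductive hypothesis, $\partial_{t^{l+1}}f$ and $\partial_{t^{l+2}}u$ belong to $L_2((0,T),\mathcal{K}^{2m(\gamma_m-l-1)}_{2,a+2m(\gamma_m-l-1)}(K))$; the correction terms involve time derivatives of $u$ of strictly lower time-order and therefore, by induction, of strictly higher spatial Kondratiev regularity, and can be absorbed into the same target space via the pointwise multiplier bound \eqref{multiplier}, the scale embeddings \eqref{kondratiev-emb}, and the lifting property from Theorem \ref{thm-lift-kondratiev}. This yields $g_l \in L_2((0,T),\mathcal{K}^{2m(\gamma_m-l-1)}_{2,a+2m(\gamma_m-l-1)}(K))$.

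I would then invoke Lemma \ref{mazja_ross} pointwise in $t$ for this elliptic problem: the base inclusion $w(\cdot,t) \in \mathcal{K}^m_{2,m}(K)$ is supplied by Step~1, the right-hand side satisfies $g_l(\cdot,t) \in \mathcal{K}^{-m}_{2,-m}(K) \cap \mathcal{K}^{2m(\gamma_m-l-1)}_{2,a+2m(\gamma_m-l-1)}(K)$, and the spectral gap on the operator pencil $\mathfrak{U}(\lambda,t)$ needed is precisely Assumption \ref{assumptions} for weight parameters $b = a + 2m(\gamma_m - i)$ with $i := l+1$ and $b' = -m$, which matches the hypothesis postulated in the theorem statement. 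The lemma then promotes $w$ to $\mathcal{K}^{2m(\gamma_m-l)}_{2,a+2m(\gamma_m-l)}(K)$ with a quantitative elliptic estimate; squaring and integrating this estimate over $(0,T)$, then summing over $l$ and adding the Step~1 bound, produces \eqref{weighted-sobolev-est}. The hardest part will be the rigorous control of the Leibniz corrections $[\partial_{t^k}L]\,\partial_{t^{l+1-k}}u$: one must verify that applying a spatial differential operator of order $2m$ to a function with fewer time derivatives but stronger spatial Kondratiev norm keeps the result in the target Kondratiev space, uniformly in $t$, and this is exactly the bookkeeping supplied by the interplay of Theorem \ref{thm-lift-kondratiev} with the multiplier bound \eqref{multiplier}.
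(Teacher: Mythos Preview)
Your overall strategy---Proposition \ref{GenSol_Sobolev} for the top time derivative, then elliptic regularity via Lemma \ref{mazja_ross} to trade time derivatives for spatial Kondratiev regularity---is exactly right, and the use of Assumption \ref{assumptions} with $i=l+1$ is identified correctly. However, there is a genuine gap in the handling of the Leibniz corrections.

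In your backward induction, at step $l$ you have already established the claimed Kondratiev regularity of $\partial_{t^{l+2}}u,\ldots,\partial_{t^{\gamma_m+1}}u$, i.e.\ of the \emph{higher} time derivatives. But the correction terms $[\partial_{t^k}L]\,\partial_{t^{l+1-k}}u$, $k=1,\ldots,l+1$, involve $\partial_{t^j}u$ with $j\le l$, i.e.\ \emph{lower} time derivatives. Your sentence ``the correction terms involve time derivatives of $u$ of strictly lower time-order and therefore, by induction, of strictly higher spatial Kondratiev regularity'' has the induction direction backwards: lower time derivatives are what you want to prove \emph{later} in the backward induction, not what you already know. At this point all you have for $\partial_{t^j}u$, $j\le l$, is the Sobolev bound $\partial_{t^j}u\in\mathring{W}^m_2(K)\hookrightarrow\mathcal{K}^m_{2,m}(K)$, and applying the order-$2m$ operator $[\partial_{t^k}L]$ only lands you in $\mathcal{K}^{-m}_{2,-m}(K)$, which is not enough to put $g_l$ in $\mathcal{K}^{2m(\gamma_m-l-1)}_{2,a+2m(\gamma_m-l-1)}(K)$ once $\gamma_m-l-1\ge 0$.

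The paper closes this gap by running a \emph{double} induction: an outer induction on $\gamma$ and, inside each step, your backward induction on $l$. The outer hypothesis (the theorem for $\gamma-1$) supplies $\partial_{t^k}u\in L_2((0,T),\mathcal{K}^{2m((\gamma-1)_m-(k-1))}_{2,a+2m((\gamma-1)_m-(k-1))}(K))$ for the lower-order $k$, and since $(\gamma-1)_m\ge\gamma_m-1$ this embeds into the space you need for the corrections. With that additional layer in place, the rest of your argument goes through essentially as written.
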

}

{
\begin{proof}
We proof the theorem by  induction. Let $\gamma=2m$, then we have  $\gamma_m=0$.  Since by our assumptions 
$f, \partial_t f\in L_2(K_T)$ it follows from  Proposition \ref{GenSol_Sobolev} that $\partial_t u(t)\in   \mathring{W}^m_2(K)\subset \mathcal{K}^0_{2,a}(K)$ if ${a\leq m}$. In this case Proposition \ref{GenSol_Sobolev} gives the estimate 
\begin{equation}\label{apriori1}
\|\partial_t u|L_2((0,T),\calk^0_{2,a}(K))\|\lesssim \|\partial_t u|L_2((0,T),\mathring{W}^m_2(K))\|\leq \sum_{k=0}^1\|\partial_{t^k}f|L_2(K_T)\|. 
\end{equation}
Moreover, since 
\[
(-1)^m Lu=f-\partial_{t}u,
\]
where for fixed $t$ the right hand side belongs to $\calk^0_{2,a}(K)$ an application of Lemma \ref{mazja_ross} (with $\gamma=m$, $a=-m$, $\gamma'=2m$, $a'=a$) gives $u(t)\in \calk^{2m}_{2,a+2m}(K)$ with the a priori estimate 
\begin{align*}
\|u(t)|\calk^{2m}_{2,a+2m}(K)\|
&\lesssim \|f(t)|\calk^0_{2,a}(K)\|+\|\partial_t u|\calk^0_{2,a}(K)\|.
\end{align*}
Now integration w.r.t. the parameter $t$ together with  \eqref{apriori1} proves the claim for $\gamma=2m$, i.e., 
\[
\|u|L_2((0,T),\calk^{2m}_{2,a+2m}(K)\|\lesssim \|f|L_2((0,T),\calk^0_{2,a}(K))\|+\sum_{k=0}^1\|\partial_{t^k}f|L_2(K_T)\|. 
\] 
Assume inductively that our assumption holds for $\gamma-1$. This means, in particular, that we have the following  a priori estimate 
\begin{align}
&\sum_{l=-1}^{(\gamma-1)_m} \|\partial_{t^{l+1}} u|{L_2((0,T),\mathcal{K}^{2m((\gamma-1)_m-l)}_{2,a+2m((\gamma-1)_m-l)}(K))}\|\notag\\
& \quad \lesssim  \sum_{k=0}^{(\gamma-1)_m}\|\partial_{t^k} f|{L_2((0,T), \mathcal{K}^{2m((\gamma-1)_m-k)}_{2,a+2m((\gamma-1)_m-k)}(K))}\|+\sum_{k=0}^{(\gamma-1)_m+1}\|\partial_{t^k} f|{L_2(K_T)}\|.\label{apriori2}
\end{align}
We are going to show that the claim then holds for $\gamma$ as well.  If $l=\gamma_m$  by our assumptions on $f$ we have  $\partial_{t^{k}} f\in L_2(K_T)$ for $k=0,\ldots, \gamma_m+1$, and by Proposition \ref{GenSol_Sobolev} together with  \eqref{ell_2} we have 
\[
\partial_{t^{\gamma_m+1}}u(t)\in \mathring{W}^m_2(K)\subset \mathcal{K}^0_{2,a}(K), \quad {a\leq m}. 
\]
In particular, Proposition \ref{GenSol_Sobolev} provides us with the a priori estimate 
\begin{align}\label{a_priori_1}
\|\partial_{t^{\gamma_m+1}}u|L_2((0,T), \mathcal{K}^0_{2,a}(K))\| &\lesssim \|\partial_{t^{\gamma_m+1}}u|L_2((0,T), \mathring{W}^m_2(K))\| \notag\\
& \lesssim \sum_{k=0}^{\gamma_m+1}\|\partial_{t^k} f| L_2(K_T)\|,  
\end{align}
which shows the claim for $l=\gamma_m$ and arbitrary $\gamma$. 
Hence, the claim holds for $l=\gamma_m$. 
We proceed by backwards induction. 
Suppose now the result holds for $l=\gamma_m, \gamma_m-1, \ldots, i$ where $0\leq i\leq \gamma_m$. We show that it then also holds for $i-1$. Differentiating \eqref{parab-1a} $i$-times gives
\begin{equation}\label{diff-i}
(-1)^mL (\partial_{t^i}u)=\partial_{t^i}f-\partial_{t^{i+1}}u-(-1)^m\sum_{k=0}^{i-1}{i\choose k}\partial_{t^{i-k}}L (\partial_{t^k}u).
\end{equation} 
From our initial assumptions on $f$ we see that $\partial_{t^i}f(t)\in \mathcal{K}^{2m(\gamma_m-i)}_{2,a+2m(\gamma_m-i)}(K)$ and from the inductive assumptions it follows that $\partial_{t^{i+1}}u(t)\in \mathcal{K}^{2m(\gamma_m-i)}_{2,a+2m(\gamma_m-i)}(K)$ and 
\begin{align}
\partial_{t^{(k-1)+1}}u \ \in \  & L_2((0,T),\calk^{2m((\gamma-1)_m-(k-1))}_{2,a+2m((\gamma-1)_m-(k-1))}(K))\notag\\
&\hookrightarrow 
L_2((0,T),\calk^{2m(\gamma_m-1-(k-1))}_{2,a+2m((\gamma_m-1-(k-1))}(K))\notag\\
&\hookrightarrow 
L_2((0,T),\calk^{2m(\gamma_m-k)}_{2,a+2m((\gamma_m-k)}(K)) \notag\\
&\hookrightarrow 
L_2((0,T),\calk^{2m(\gamma_m-i+1)}_{2,a+2m((\gamma_m-i+1)}(K)),  \label{apriori-help}
\end{align} 
where we used  $(\gamma-1)_m=\lfloor \frac{\gamma-2}{2m}\rfloor \geq \gamma_m-1$ in the second step and the fact that $k=0,\ldots, i-1$ in the last step. From \eqref{apriori-help} we see that 
\[
\partial_{t^{i-k}}L (\partial_{t^k}u)\in L_2((0,T),\calk^{2m(\gamma_m-i)}_{2,a+2m((\gamma_m-i)}(K)), 
\]
hence, the right hand side of \eqref{diff-i} belongs to $ L_2((0,T),\calk^{2m(\gamma_m-i)}_{2,a+2m((\gamma_m-i)}(K))$. 
An application of Lemma \ref{mazja_ross} (now with $\gamma'=2m(\gamma_m-(i-1))$, $a'=a+2m(\gamma_m-i)$ and again taking $\gamma=m$, $a=-m$ according to Remark \ref{rem_mazja_ross_ell}) yields 
\[
\partial_{t^{(i-1)+1}}u(t)\in \mathcal{K}^{2m(\gamma_m-(i-1))}_{2,a+2m(\gamma_m-(i-1))}(K). 
\] 
Moreover, we have the a priori estimate 
\begin{align*}
\|&\partial_{t^{(i-1)+1}}u(t)| \mathcal{K}^{2m(\gamma_m-(i-1))}_{2,a+2m(\gamma_m-(i-1))}(K)\|\\
&\lesssim \|\partial_{t^i}f(t)| \mathcal{K}^{2m(\gamma_m-i)}_{2,a+2m(\gamma_m-i)}(K)\|+\|\partial_{t^{i+1}}u(t)|\mathcal{K}^{2m(\gamma_m-i)}_{2,a+2m(\gamma_m-i)}(K)\|\\
& \qquad + \sum_{k=0}^{i-1}\|\partial_{t^{i-k}}L(\partial_{t^k}u)(t)|\calk^{2m(\gamma_m-i)}_{2,a+2m(\gamma_m-i)}(K)\| \\
&\lesssim \|\partial_{t^i}f(t)| \mathcal{K}^{2m(\gamma_m-i)}_{2,a+2m(\gamma_m-i)}(K)\|+\|\partial_{t^{i+1}}u(t)|\mathcal{K}^{2m(\gamma_m-i)}_{2,a+2m(\gamma_m-i)}(K)\|\\
& \qquad + \sum_{k=0}^{i-1}\|(\partial_{t^k}u)(t)|\calk^{2m(\gamma_m-i+1)}_{2,a+2m(\gamma_m-i+1)}(K)\| \\
&\lesssim \|\partial_{t^i}f(t)| \mathcal{K}^{2m(\gamma_m-i)}_{2,a+2m(\gamma_m-i)}(K)\|+\|\partial_{t^{i+1}}u(t)|\mathcal{K}^{2m(\gamma_m-i)}_{2,a+2m(\gamma_m-i)}(K)\|\\
& \qquad + \sum_{k=0}^{i-1}\|(\partial_{t^k}u)(t)|\calk^{2m((\gamma-1)_m-(k-1))}_{2,a+2m((\gamma-1)_m-(k-1))}(K)\| \\
\end{align*}
where we used \eqref{apriori-help} in the last step. 
Integration w.r.t. the parameter $t$ together with the inductive assumptions on $\partial_{t^{i+1}}u$  and  $\partial_{t^k} u$ (cf.  \eqref{apriori2})  gives the a priori estimate 
\begin{align*}
\|&\partial_{t^{(i-1)+1}}u| L_2((0,T),\mathcal{K}^{2m(\gamma_m-(i-1))}_{2,a+2m(\gamma_m-(i-1))}(K))\|\\
&\lesssim \|\partial_{t^{i}} f|{L_2((0,T), \mathcal{K}^{2m(\gamma_m-i)}_{2,a+2m(\gamma_m-i)}(K))}\|+\|\partial_{t^{i+1}} u|{L_2((0,T), \mathcal{K}^{2m(\gamma_m-i)}_{2,a+2m(\gamma_m-i)}(K))}\| \\
& \quad + \sum_{k=0}^{i-1}\|(\partial_{t^k}u)|L_2((0,T),\calk^{2m((\gamma-1)_m-(k-1))}_{2,a+2m((\gamma-1)_m-(k-1))}(K))\| \\
& \lesssim \sum_{k=0}^{\gamma_m}\|\partial_{t^k} f|{L_2((0,T), \mathcal{K}^{2m(\gamma_m-k)}_{2,a+2m(\gamma_m-k)}(K))}\|+\sum_{k=0}^{\gamma_m+1}\|\partial_{t^k} f|{L_2(K_T)}\| \\
& \quad +  \sum_{k=0}^{(\gamma-1)_m}\|\partial_{t^k} f|{L_2((0,T), \mathcal{K}^{2m((\gamma-1)_m-k)}_{2,a+2m((\gamma-1)_m-k)}(K))}\|+\sum_{k=0}^{(\gamma-1)_m+1}\|\partial_{t^k} f|{L_2(K_T)}\| \\
& \lesssim  \sum_{k=0}^{\gamma_m}\|\partial_{t^k} f|{L_2((0,T), \mathcal{K}^{2m(\gamma_m-k)}_{2,a+2m(\gamma_m-k)}(K))}\|+\sum_{k=0}^{\gamma_m+1}\|\partial_{t^k} f|{L_2(K_T)}\|, 
\end{align*}
where in the last step we used the fact that $(\gamma-1)_m\leq \gamma_m$. This  
shows that the claim is true for $i-1$ and completes the proof. 
\end{proof}
}

\remark{\label{rem-restr}
 The existence of the solution $u\in \mathring{W}_2^{m,{\gamma_m+2}}(K_T)$  follows from  Proposition \ref{GenSol_Sobolev} {using  $l=\gamma_m+1$}.
}

{
\remark{\label{discuss_aa}\label{rem_heat_eq} We discuss the  restrictions on the parameter $a$ appearing in Theorem \ref{thm-weighted-sob-reg}. 
\begin{itemize}
\item[(i)] {If $K$ is a polyhedral cone} we  require 
\begin{align*}
1) \quad &-\delta^{(k)}_{-}<a+2m(\gamma_m-i)+m< \delta^{(k)}_{+}, \quad i=0,\ldots, \gamma_m,\\
2) \quad & {-m}\leq a\leq m.
\end{align*}
{Condition 1) can be removed if the cone $K$ is smooth without edges. }
If we consider the heat equation, then $m=1$ and  $\delta^{(k)}_{\pm}=\frac{\pi}{\theta_k}$, {cf. \cite[Sect.~4]{LL15}.} For the extremal case that $\theta_k=2\pi$ we obtain 
\begin{align*}
1) \quad &-\frac 32<a+2(\gamma_m-i)< -\frac 12, \quad i=0,\ldots, \gamma_m,\\
2) \quad  &{-1\leq } a\leq 1.
\end{align*}
We see that 1) is only satisfied for $\gamma_m-i=0$. Thus, the theorem holds in this case only for $\gamma_m=0$ and $a\in \left({-1},-\frac 12\right)$, {i.e., regularity $\gamma\leq 2m=2$}.  \\
{The results improve for smaller angles, i.e., choosing $\theta_k=\frac{\pi}{4}$ we obtain the following conditions
\begin{align*}
1) \quad &-5<a+2(\gamma_m-i)< 3, \quad i=0,\ldots, \gamma_m,\\
2) \quad & -1\leq a\leq 1. 
\end{align*}
We see that $-1<a$ and $a<3-2(\gamma_m-i)$. However, 
\[
-1<a<3-2(\gamma_m-i)
\]
is satisfied for $\gamma_m-i<2$, i.e., we {take $\gamma_m=1$ ($\gamma\leq 4m$)} in this case  and  $a\in \left(-1,1\right)$. } {The same phenomenon has been known  for elliptic problems: the shape of the domain limits the regularity of the solutions  even for smooth data. }\\
{For general angles $\theta_k$ the restrictions on $a$ for the heat equation read as 
\begin{align*}
1) \quad &-\frac{\pi}{\theta_k}-1<a+2(\gamma_m-i)< \frac{\pi}{\theta_k}-1, \quad i=0,\ldots, \gamma_m,\\
2) \quad  & -1\leq  a\leq 1. 
\end{align*}
1) and 2) together yield 
\[
-1\leq a<\min\left(1, \frac{\pi}{\theta_k}-1-2(\gamma_m-i)\right). 
\]
In particular, for the upper bound we see that 
\[\min\left(1, \frac{\pi}{\theta_k}-1-2(\gamma_m-i)\right)=
\begin{cases}
\min(1, \frac{\pi}{\theta_k}-1),& \gamma_m=0, \\
\min(1, \frac{\pi}{\theta_k}-3),& \gamma_m=1.
\end{cases}
\]
Therefore, when $\gamma_m=0$, since it holds that  $-1<\frac{\pi}{\theta_k}-1$,  it is always possible to find suitable parameters $a$.  On  the other hand for  $\gamma_m=1$ we require   $-1<\frac{\pi}{\theta_k}-3$, which is true for angles $\theta_k<\frac{\pi}{2}$ only. 
\item[(ii)] {Later on, in Section \ref{Sect-4} we want to study the  Besov regularity of problem \eqref{parab-nonlin-1}. This will be performed by using embedding results of Kondratiev spaces into Besov spaces. Since all functions in the adaptivity scale \eqref{adaptivityscale} of Besov spaces are locally integrable, the same must hold for the Kondratiev spaces which requires $a>0$. For the linear problem \eqref{parab-1a} this is no restriction since from 
 Theorem \ref{thm-weighted-sob-reg} it follows that our solution satisfies $u\in L_2((0,T),\mathcal{K}^{\gamma}_{2,a'}(K))$, where $a'=a+2m(\gamma_m+1)>0$. Thus, we always have a locally integrable solution in this case. \\
}
 }However, the above calculations in (i) tell us that for non-convex polyhedral cones, i.e., $\theta_k>\pi$ for some $k=1,\ldots, n$, we can only choose $\gamma_m=0$ in Theorem \ref{thm-weighted-sob-reg} and the restriction on $a$ becomes
 \[
 -1\leq a<\min\left(1, \frac{\pi}{\theta_k}-1\right)<0. 
 \]
From this we see that it is possible to choose $a>0$ only as long as the polyhedral cone $K$ is convex. 
\end{itemize}
}
}

{The regularity results obtained in Theorem \ref{thm-weighted-sob-reg} {only hold under
certain restrictions on the parameter $a$ we are able to choose}. We  refer to the {previous} discussion in Remark \ref{discuss_aa}, where the heat equation is treated in detail. In particular, it turns out that we are not able to choose $\gamma_m>0$ if we have a non-convex polyhedral cone, since there is no suitable $a$ satisfying all of our requirements in this case.  Therefore, in order to be able to treat non-convex {cones as well, we need stronger
assumptions.  It turns out that we obtain a positive result if the
right--hand side $f$ is arbitrarily  smooth  with respect to time. With this
additional} assumption we are able to weaken the restrictions on the parameter $a$ and allow a larger range. {However, as a drawback, these results are
hard to apply to nonlinear equations since the right-hand sides are not
taken from a Banach or a quasi-Banach space.}
}

{
\begin{theorem}[Weighted Sobolev regularity II]\label{thm-weighted-sob-reg-2}
Let $K\subset \real^3$ be a polyhedral cone and  $\eta\in \nat $ with  $\eta\geq 2m$. Moreover, let  $a\in \real$ with   $a\in [-m,m]$.  Assume that the right hand side $f$ of \eqref{parab-1a} satisfies 
\begin{itemize}
\item[(i)] $ f\in \bigcap_{l=0}^{\infty}W^l_2((0,T), L_2(K)\cap \mathcal{K}^{\eta-2m}_{2,a}(K))$. 
\item[(ii)] $\partial_{t^l} f(x,0)=0$, \quad  $l\in \nat_0.$
\end{itemize}
Furthermore, let  Assumption \ref{assumptions}  hold for weight parameters $b=a$ and  $b'=-m$. 
Then for the generalized solution $u\in  \bigcap_{l=0}^{\infty}{\mathring{W}}_2^{m,l+1}(K_T)$ of problem \eqref{parab-1a} we have 
$$\partial_{t^{l}} u\in L_2((0,T),\mathcal{K}^{\eta}_{2,a+2m}(K))\qquad \text{for all}\quad l\in \nat_0. $$
In particular, for the derivative $\partial_{t^l} u$  we have the a priori estimate 
\begin{align}
\sum_{k=0}^{l}& \|\partial_{t^{k}} u|{L_2((0,T),\mathcal{K}^{\eta}_{2,a+2m}(K))}\|\notag\\
&\lesssim  \sum_{k=0}^{l+(\eta-2m)}\|\partial_{t^k} f|{L_2((0,T), \mathcal{K}^{\eta-2m}_{2,a}(K))}\|+\sum_{k=0}^{l+1+(\eta-2m)}\|\partial_{t^k} f|{L_2(K_T)}\|,\label{weighted-sobolev-est-2}
\end{align}
where the  constant is  independent of $u$ and $f$. 
\end{theorem}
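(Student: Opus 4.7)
The plan is to combine existence of a smooth-in-time generalized solution (via Proposition \ref{GenSol_Sobolev}) with an iterated application of the elliptic shift provided by Lemma \ref{mazja_ross}, exploiting that the hypothesis on the operator pencil is posed only for the single weight pair $(b,b') = (a,-m)$, so that every application of the Lemma can be done with identical weights. First I would apply Proposition \ref{GenSol_Sobolev} at every level $l \in \nat_0$: since by assumption $f \in W^l_2((0,T), L_2(K))$ for all $l$ and the compatibility conditions $\partial_{t^k}f(\cdot,0) = 0$ hold, this yields a unique $u \in \bigcap_l \mathring{W}^{m,l+1}_2(K_T)$ together with $\partial_{t^k}u \in L_2((0,T), \mathring{W}^m_2(K))$ for every $k \geq 0$. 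Combined with the embeddings $\mathring{W}^m_2(K) \hookrightarrow \mathcal{K}^m_{2,m}(K)$ from \cite[Lem.~3.1.6]{MR10} and $L_2(K) \hookrightarrow \mathcal{K}^{-m}_{2,-m}(K)$ from Remark \ref{rem_mazja_ross_ell}, this supplies the base regularity $\partial_{t^k}u(t) \in \mathcal{K}^m_{2,m}(K)$ needed to trigger Lemma \ref{mazja_ross} at the low end with $\gamma = m$, $a_L = -m$.

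Next, differentiating \eqref{parab-1a} exactly $l$ times in $t$ and using the Leibniz rule on $L(x,t;D_x)u$ produces, for a.e.\ $t$, the elliptic identity
\[
(-1)^m L(x,t;D_x)(\partial_{t^l}u) = \partial_{t^l}f - \partial_{t^{l+1}}u - (-1)^m\sum_{k=0}^{l-1}\binom{l}{k}(\partial_{t^{l-k}}L)(\partial_{t^k}u).
\]
For each fixed $l$ the goal is to apply Lemma \ref{mazja_ross} to this equation with $\gamma' = \eta$, $a'_L = a$, which is legitimate because Assumption \ref{assumptions} furnishes precisely the eigenvalue-free strip required. The only nontrivial verification is that the right-hand side belongs to $\mathcal{K}^{\eta-2m}_{2,a}(K)$. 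I would establish this by a double induction: an outer one raising the spatial smoothness in blocks of $2m$ through the levels $s_i := 2mi$ with $1 \leq i \leq N := \lceil \eta/(2m)\rceil$ (non-multiples of $2m$ being absorbed by \eqref{kondratiev-emb}), and, inside each outer step, a forward induction on $l$. At a generic inner step at level $i$, the three types of terms on the right-hand side are handled separately: the inhomogeneity $\partial_{t^l}f$ lies in $\mathcal{K}^{\eta-2m}_{2,a}(K) \hookrightarrow \mathcal{K}^{s_i-2m}_{2,a}(K)$ directly by hypothesis (since $s_i \leq \eta$); the term $\partial_{t^{l+1}}u$ is placed in $\mathcal{K}^{s_i-2m}_{2,a}(K)$ via the outer inductive hypothesis at level $i-1$ (at $i=1$ this reduces to the Sobolev base embedded into $\mathcal{K}^0_{2,a}(K)$, which uses $a \leq m$); and the commutator terms $(\partial_{t^{l-k}}L)(\partial_{t^k}u)$ with $k < l$ follow from Theorem \ref{thm-lift-kondratiev} (to quantify the $2m$-order drop in smoothness caused by $L$), the uniform boundedness of the time-derivatives of the coefficients $a_{\alpha\beta}$, and the inner inductive hypothesis applied to $\partial_{t^k}u$, $k<l$. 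Crucially, every application of Lemma \ref{mazja_ross} during this bootstrap uses the identical weight pair $(-m,a)$, so Assumption \ref{assumptions} needs to be invoked only once.

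The hard part will be the bookkeeping that closes the time-derivative loop: because the right-hand side at inner step $(i,l)$ genuinely requires $\partial_{t^{l+1}}u$ at the previous spatial level, each additional $2m$ units of spatial smoothness are paid for by one additional order of time regularity of $f$, and this is exactly what produces the shift in the summation indices on the right-hand side of \eqref{weighted-sobolev-est-2}. Tracking constants through the nested applications of Lemma \ref{mazja_ross} and of the a priori bound of Proposition \ref{GenSol_Sobolev} (which controls the Sobolev-type norms that launch the bootstrap) then yields the claimed estimate, and uniqueness of $u$ is inherited from the uniqueness clause in Proposition \ref{GenSol_Sobolev}.
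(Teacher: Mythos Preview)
Your approach is essentially the paper's: the same double induction (outer on spatial smoothness, inner forward on $l$), launched by Proposition \ref{GenSol_Sobolev} and the embeddings of Remark \ref{rem_mazja_ross_ell}, and driven at each step by Lemma \ref{mazja_ross} applied with the fixed weight pair $(-m,a)$ to the $l$-fold time-differentiated equation. The only difference is cosmetic: the paper increments the outer parameter $\eta$ by $1$ at a time rather than in blocks of $2m$. Your coarser stepping is equally valid and in fact requires fewer iterations, so your own bookkeeping would produce a shift of order $\lceil \eta/(2m)\rceil - 1$ on the right of \eqref{weighted-sobolev-est-2} rather than $\eta-2m$; since this is a smaller number, the stated inequality follows \emph{a fortiori}. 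Your sentence that this ``is exactly what produces the shift'' $\eta-2m$ is therefore slightly off for your own argument, but this is harmless for correctness.
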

}

{
\begin{proof}
We proof the theorem by  induction. Let $\eta=2m$.  Since by our assumptions 
$f, \partial_t f\in L_2(K_T)$ it follows from  Proposition \ref{GenSol_Sobolev} that $\partial_t u(t) \in   \mathring{W}^m_2(K)\subset \mathcal{K}^0_{2,a}(K)$ if ${a\leq m}$. In this case Proposition \ref{GenSol_Sobolev} gives the estimate 
\begin{equation}\label{apriori1a}
\|\partial_t u|L_2((0,T),\calk^0_{2,a}(K))\|\lesssim \|\partial_t u|L_2((0,T),\mathring{W}^m_2(K))\|\lesssim \sum_{k=0}^1\|\partial_{t^k}f|L_2(K_T)\|. 
\end{equation}
Moreover, since 
\[
 Lu=(-1)^m(f-\partial_{t}u)=:F,
\]
where for fixed $t$ the right hand side $F(t)$ belongs to $\calk^0_{2,a}(K)$ (and due to the fact that $a\geq -m$, we  have $\partial_t u(t) \in \mathring{W}^m_2(K)\hookrightarrow \calk^0_{2,a}(K)\hookrightarrow \calk^{-m}_{2,-m}(K)$, thus, also $F(t)\in \calk^{-m}_{2,-m}(K)$) an application of Lemma \ref{mazja_ross} (with $\gamma=m$, $a=-m$, $\gamma'=2m$, $a'=a$) gives $u(t)\in \calk^{2m}_{2,a+2m}(K)$ with the a priori estimate 
\begin{align*}
\|u(t)|\calk^{2m}_{2,a+2m}(K)\|
&\lesssim \|f(t)|\calk^0_{2,a}(K)\|+\|\partial_t u(t)|\calk^0_{2,a}(K)\|.
\end{align*}
Now  integration w.r.t. the parameter $t$ and  \eqref{apriori1a}  prove the claim for $\eta=2m$ and $l=0$, i.e., 
\[
\|u|L_2((0,T),\calk^{2m}_{2,a+2m}(K))\|\lesssim \|f|L_2((0,T),\calk^0_{2,a}(K))\|+\sum_{k=0}^1\|\partial_{t^k}f|L_2(K_T)\|.
\]
We now assume that the claim is true for $\eta=2m$ and $k=0,\ldots, l-1$. Then differentiating  \eqref{parab-1a} $l$-times gives
\begin{equation}\label{diff-ia}
L (\partial_{t^l}u)=(-1)^m\left(\partial_{t^l}f-\partial_{t^{l+1}}u-(-1)^m\sum_{k=0}^{l-1}{l\choose k}\partial_{t^{l-k}}L (\partial_{t^k}u)\right)=:F.
\end{equation} 
By our initial assumptions $\partial_{t^l}f(t)\in \calk^0_{2,a}(K)$ and $\partial_{t^{l+1}}u(t)\in \mathring{W}^m_2(K)\hookrightarrow \calk^0_{2,a}(K)$. Moreover, the inductive assumptions provide us with  $\partial_{t^k}u(t)\in \calk^{2m}_{2,a+2m}(K)$, thus, $\partial_{t^{l-k}}L(\partial_{t^k}u)(t)\in \calk^{0}_{2,a}(K)$ and we see that $F(t)\in \calk^{0}_{2,a}(K)$. Applying Lemma \ref{mazja_ross} (with $\gamma=m$, $a=-m$, $\gamma'=2m$, $a'=a$) gives $\partial_{t^l}u(t)\in \calk^{2m}_{2,a+2m}(K)$ and we have the following a priori estimate 
\begin{align*}
\|\partial_{t^l}u(t)&| \mathcal{K}^{2m}_{2,a+2m}(K)\|\\
&\lesssim \|\partial_{t^l}f(t)| \mathcal{K}^{0}_{2,a}(K)\|+\|\partial_{t^{l+1}}u(t)|\mathcal{K}^{0}_{2,a}(K)\|\\
& \qquad + \sum_{k=0}^{l-1} \|\partial_{t^{l-k}}L (\partial_{t^k}u)(t)|\mathcal{K}^{0}_{2,a}(K)\| \\
&\lesssim \|\partial_{t^l}f(t)| \mathcal{K}^{0}_{2,a}(K)\|+\|\partial_{t^{l+1}}u(t)|\mathring{W}^m_2(K)\|\\
& \qquad + \sum_{k=0}^{l-1} \| (\partial_{t^k}u)(t)|\mathcal{K}^{2m}_{2,a+2m}(K)\|. 
\end{align*}
Integration w.r.t. $t$, our inductive assumptions, and \eqref{form_gen_sol_sobolev}   give 
\begin{align*}
\|\partial_{t^l}u&| L_2((0,T),\mathcal{K}^{2m}_{2,a+2m}(K))\|\\
&\lesssim \sum_{k=0}^{l}\|\partial_{t^k}f| L_2((0,T),\mathcal{K}^{0}_{2,a}(K))\|+\sum_{k=0}^{l+1}\|\partial_{t^k}f| L_2(K_T)\|. 
\end{align*}
Assume now inductively that our assumption holds for $\eta-1$ and all derivatives $l\in \nat$. 
We are going to show first that the claim then holds for $\eta$ and $l=0$ as well.  Looking at 
\[
Lu=(-1)^m(f-\partial_{t}u)=:F,
\]
we see that  $f(t)\in \calk^{\eta-2m}_{2,a}(K)$ and by our inductive assumption $\partial_t u(t)\in \calk^{\eta-1}_{2,a+2m}(K)\hookrightarrow \calk^{\eta-2m}_{2,a}(K)$. Therefore, $F(t)\in \calk^{\eta-2m}_{2,a}(K)$ and an application of Lemma \ref{mazja_ross} (with $\gamma=m$, $a=-m$, $\gamma'=\eta$, $a'=a$) gives $u(t)\in \calk^{\eta}_{2,a+2m}(K)$ with a priori estimate 
\begin{align*}
\|u(t)| \mathcal{K}^{\eta}_{2,a+2m}(K)\|
&\lesssim \|f(t)| \mathcal{K}^{\eta-2m}_{2,a}(K)\|+\|\partial_{t}u(t)|\mathcal{K}^{\eta-2m}_{2,a}(K)\| \\
&\lesssim \|f(t)| \mathcal{K}^{\eta-2m}_{2,a}(K)\|+\|\partial_{t}u(t)|\mathcal{K}^{\eta-1}_{2,a+2m}(K)\|.  
\end{align*}
Integration w.r.t. the parameter $t$ and our inductive assumptions  show that the claim is true for $\eta$ and $l=0$, i.e., 
\begin{align*}
\|u&| L_2((0,T),\mathcal{K}^{\eta}_{2,a+2m}(K))\|\\
&\lesssim \sum_{k=0}^{1+(\eta-1-2m)}\|\partial_{t^k}f| L_2((0,T),\mathcal{K}^{\eta-2m}_{2,a}(K))\|+\sum_{k=0}^{2+(\eta-1-2m)}\|\partial_{t^k}f| L_2(K_T)\| \\
&= \sum_{k=0}^{\eta-2m}\|\partial_{t^k}f| L_2((0,T),\mathcal{K}^{\eta-2m}_{2,a}(K))\|+\sum_{k=0}^{1+(\eta-2m)}\|\partial_{t^k}f| L_2(K_T)\|. 
\end{align*} 
Suppose now that it is true for $\eta$ and derivatives $k=0,\ldots, l-1$.  Differentiating \eqref{parab-1a} $l$-times again gives
\eqref{diff-ia}. 
From our initial assumptions on $f$ we see that $\partial_{t^l}f(t)\in \mathcal{K}^{\eta-2m}_{2,a}(K)$ and from the inductive assumptions it follows that $\partial_{t^{l+1}}u(t)\in \mathcal{K}^{\eta-1}_{2,a+2m}(K)\hookrightarrow \mathcal{K}^{\eta-2m}_{2,a}(K)$. Moreover, by the inductive assumptions $\partial_{t^k}u(t)\in \calk^{\eta}_{2,a+2m}(K)$ for $k=0,\ldots, l-1$ and therefore 
\[
\partial_{t^{l-k}}L (\partial_{t^k}u)(t)\in \calk^{\eta-2m}_{2,a}(K).
\]
This shows that the right hand side in \eqref{diff-ia} satisfies  $F(t)\in \calk^{\eta-2m}_{2,a}(K)$. Applying  Lemma \ref{mazja_ross} again (with $\gamma=m$, $a=-m$, $\gamma'=\eta$, $a'=a$) gives $u(t)\in \calk^{\eta}_{2,a+2m}(K)$ with a priori estimate 
\begin{align*}
\|&\partial_{t^l}u(t)| \mathcal{K}^{\eta}_{2,a+2m}(K)\|\\
&\lesssim \|\partial_{t^l}f(t)| \mathcal{K}^{\eta-2m}_{2,a}(K)\|+\|\partial_{t^{l+1}}u(t)|\mathcal{K}^{\eta-2m}_{2,a}(K)\|\\
& \qquad + \sum_{k=0}^{l-1} \|\partial_{t^{l-k}}L (\partial_{t^k}u)(t)|\mathcal{K}^{\eta-2m}_{2,a}(K)\| \\
&\lesssim \|\partial_{t^l}f(t)| \mathcal{K}^{\eta-2m}_{2,a}(K)\|+\|\partial_{t^{l+1}}u(t)|\mathcal{K}^{\eta-1}_{2,a+2m}(K)\|\\
& \qquad + \sum_{k=0}^{l-1} \| (\partial_{t^k}u)(t)|\mathcal{K}^{\eta}_{2,a+2m}(K)\|. \\
\end{align*}
Integration w.r.t. the parameter $t$ together with our inductive assumptions gives  
\begin{align*}
\|&\partial_{t^l}u| L_2((0,T),\mathcal{K}^{\eta}_{2,a+2m}(K))\|\\
&\lesssim \|\partial_{t^l}f(t)| \mathcal{K}^{\eta-2m}_{2,a}(K)\| \\ 
 & \quad +\sum_{k=0}^{l+1+(\eta-1-2m)}\|\partial_{t^k}f| L_2((0,T),\mathcal{K}^{\eta-2m}_{2,a}(K))\|+\sum_{k=0}^{l+2+(\eta-1-2m)}\|\partial_{t^k}f| L_2(K_T)\|\\
& \quad + \sum_{k=0}^{l-1+(\eta-2m)}\|\partial_{t^k}f| L_2((0,T),\mathcal{K}^{\eta-2m}_{2,a}(K))\|
 +\sum_{k=0}^{l-1+1+(\eta-2m)}\|\partial_{t^k}f| L_2(K_T)\| \\
&\lesssim \sum_{k=0}^{l+(\eta-2m)}\|\partial_{t^k}f| L_2((0,T),\mathcal{K}^{\eta-2m}_{2,a}(K))\|+\sum_{k=0}^{l+1+(\eta-2m)}\|\partial_{t^k}f| L_2(K_T)\|, 
\end{align*}
which finishes the proof. 
\end{proof}
}

{
\remark{In Theorem \ref{thm-weighted-sob-reg-2} compared to Theorem \ref{thm-weighted-sob-reg} we only require the parameter $a$ to satisfy $a\in [-m,m]$ and $\delta_{-}^{(k)}<a+m<\delta_{+}^{(k)}$ independent of the regularity parameter {$\eta$} which can be arbitrary high. In particular, for the heat equation this leads to the restriction 
\[
-1\leq  a<\min\left(1, \frac{\pi}{\theta_k}-1\right).  
\]
We see that even in the extremal case when $\theta_k=2\pi$ we can still take $-1\leq a<-\frac 12$ (resulting in $u\in L_2((0,T), \calk^{\eta}_{a+2m}(K))$ being locally integrable since $a+2m>0$) and choose {$\eta$} arbitrary high, this way also covering non-convex polyhedral cones with our results. 
}
}

\subsection{Hyperbolic regularity results}

{In this subsection we recall regularity results of linear hyperbolic equations of second order from  \cite{LT15}. }

\subsubsection{The fundamental problem} We consider the following initial-boundary value problem 

\begin{equation} \label{hyp-1a}
\left\{\begin{array}{rl}
\frac{\partial^2}{\partial t^2}u+L(x,t,D_x) u \ = \ f(x,t)  & \text{ in } \Omega_T, \\[0.1cm]
u(x,0) \ = \ \partt u(x,0)  \ = \ 0  \  \qquad & \text{ in } \Omega,\\[0.1cm]
u\big|_{\partial \Omega\times (0,T)} \ =\ 0, \  \qquad &    \\
\end{array}
\right\}
\end{equation}
where $\Omega$ is the special Lipschitz domain from Definition \ref{def_special_Lip} and  $L$ is  a linear differential operator of second order on $\Omega_T$ of the following form 
\[
L(x,t,D_x)u=-\sum_{i,j=1}^{{d}}\frac{\partial}{\partial x_j}\left(a_{ij}(x,t)\frac{\partial u}{\partial x_i}\right)
+\sum_{i=1}^{{d}}b_i(x,t)\frac{\partial u}{\partial x_i}+c(x,t)u,
\]
where $a_{ij}(x,t)$, $b_i(x,t)$, and $c(x,t)$ are real-valued functions on $\Omega_T$ belonging to 
$C^{k+1}(\Omega_T)$, {$k\in \nat_0$}. Moreover, assume that the coefficients of $L$ and {their} derivatives are bounded on $\Omega_T$. Suppose that $a_{ij}=a_{ji}$  ($i,j=1,\ldots, n$) are continuous in $x\in \overline{\Omega}$ uniformly with respect to $t\in [0,T]$ and 
\[
\sum_{i,j=1}^{{d}} a_{ij}(x,t)\xi_i\xi_j\geq \mu_0|\xi|^2
\]
for all $\xi\in \rn\setminus \{0\}$ and $(x,t)\in \Omega_T$, where $\mu_0$ is a positive constant.  It is possible to reduce the operator $L$ with coefficients at $P\in l_0$, $t\in (0,T)$, to its canonical form 
\[
L_0^{(2)}:=-\sum_{i,j=1}^2 a_{ij}(P,t)\frac{\partial^2}{\partial x_i\partial x_j}, 
\]
{cf. \cite[p.~460]{LT15} and the references given there}. 
Via this reduction it can be realized, that after a linear transformation of coordinates { the half-spaces $T_1(P)$ and $T_2(P)$ (see Definition \ref{def_special_Lip} for details)} go over into hyperplanes $T_1^{'}$ and $T_2^{'}$, respectively. Furthermore, the angle $\beta$ at $(P,t)$ is transformed to 
\begin{equation}\label{omega}
\omega(P,t)=\arctan \frac{\left[a_{11}(P,t)a_{22}(P,t)-a_{12}^2(P,t)\right]^{1/2}}{a_{22}(P,t)\cot \beta -a_{12}(P,t)}.
\end{equation}
The value $\omega(P,t)$ does not depend on the method by which $L_0^{(2)}$ is reduced to its canonical form. Moreover, the function $\omega(P,t)$ is infinitely differentiable and $\omega(P,t)>0$. Since $\Omega$ is bounded it follows that the manifold $l_0$ is compact and we put 
\begin{equation}\label{omega_max}
\omega:=\max_{P\in l_0, t\in [0,T]}\omega(P,t).
\end{equation}

A function  $u(x,t)$ is called a {\em generalized solution} of problem \eqref{hyp-1a} on $[0,T]$, if, and only if, $u\in L_2([0,T], \mathring{W}^1_2(\Omega))$, $\partial_t u\in L_2([0,T], L_2(\Omega))$, $\partial_{t^2}u \in L_2([0,T], W^{-1}_2(\Omega))$ such that $u(x,0)=\partial_t u(x,0)=0$ and the equality 
\[
(\partial_{t^2}u(\cdot, t),v)+ B[u(t),v;t]
=(f(\cdot, t),v)
\]
holds for all $v\in \mathring{W}^1_2(\Omega)$ and for all $t\in [0,T]$, where 
\[
B[u,v;t]
=\int_{\Omega}\left(\sum_{i,j=1}^{{d}} a_{ij}(x,t)\frac{\partial u}{\partial x_i}\frac{\partial v}{\partial x_j} +\sum_{i=1}^{{d}} b_i(x,t)\frac{\partial u}{\partial x_i}v +c(x,t)uv\right) \ud x. 
\]

\subsubsection{Regularity results in weighted Sobolev spaces}

Concerning the regularity of the solution of \eqref{hyp-1a} in weighted Sobolev spaces,  a reformulation of the results from  \cite[Thms. 2.1, 2.2]{LT15} yields the following.  


\medskip 

\begin{lemma}\label{thm-hyperbolic-weight-sob-reg}
Let $\Omega\subset \real^d$ be a special Lipschitz domain from Definition \ref{def_special_Lip}. Furthermore,  let $m\in \nat$, $m\geq 2$, $m-1\leq a<\min(m,\frac{\pi}{\omega}-1)$, and   assume $f$ satisfies  
\begin{itemize}
\item[(i)] $\partial_{t^j} f \in  L_{\infty}((0,T),\mathcal{K}^{m-j}_{2,a-j}(\Omega))$, \ $0\leq j\leq m-1$, 
\item[(ii)] $\partial_{t^j} f(x,0)=0 $, \ $0\leq j\leq m-2$.
\end{itemize}
Then for the generalized solution $u$ of problem \eqref{hyp-1a} we have 
$$
\partial_{t^j} u \in L_{\infty}((0,T),W^1_2(\Omega))\cap L_{\infty}((0,T),\mathcal{K}^{m-j}_{2,a-j}(\Omega)), \qquad 0\leq j\leq m, 
$$ 
and the following a priori estimate holds 
\[
\sum_{j=0}^{m} \|\partial_{t^j} u| L_{\infty}((0,T),\mathcal{K}^{m-j}_{2,a-j}(\Omega))\|\lesssim \sum_{j=0}^{m-1} \left\|\partial_{t^j} f|L_{\infty}((0,T),\mathcal{K}^{m-j}_{2,a-j}(\Omega))\right\|. 
\]
\end{lemma}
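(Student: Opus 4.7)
The statement is essentially a reformulation of Luong--Tuan, so my plan is to describe how such a result is established rather than to reproduce their detailed machinery. The natural strategy is to combine a standard hyperbolic energy argument (which yields control of $u$ and $\partial_t u$ in the finite-energy space $W_2^1(\Omega)$ and $L_2(\Omega)$ respectively) with elliptic regularity in Kondratiev spaces on the Lipschitz domain $\Omega$ with edge $l_0$. The compatibility conditions $\partial_{t^j}f(x,0)=0$, $0\le j\le m-2$, together with the zero initial data for $u$ and $\partial_t u$, will propagate to give $\partial_{t^j}u(x,0)=0$ for $j\le m-1$, so that the time-differentiated problems are again well-posed hyperbolic initial-boundary value problems with vanishing data.

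First I would establish the base case $j=0,1$: multiplying the equation by $\partial_t u$ and integrating by parts in $\Omega$ gives the classical Garding-type energy identity; the ellipticity condition $\sum a_{ij}\xi_i\xi_j\ge \mu_0|\xi|^2$ and Gronwall's lemma then yield
\[
\|\partial_t u(t)\,|L_2(\Omega)\| + \|u(t)\,|W_2^1(\Omega)\| \lesssim \|f\,|L_\infty((0,T),L_2(\Omega))\|,
\]
uniformly in $t\in[0,T]$. To get the higher regularity in time, differentiate \eqref{hyp-1a} formally $j$ times with respect to $t$, obtaining an equation of the same hyperbolic type for $v:=\partial_{t^j}u$ with right-hand side $\partial_{t^j}f$ plus commutator terms involving derivatives of the coefficients of $L$ and lower-order time derivatives of $u$ (which are controlled by induction). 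Applying the energy estimate to each $v$ gives $\partial_{t^j}u\in L_\infty((0,T),W_2^1(\Omega))$ for all $j\le m$.

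Next, to promote this into the weighted scale $\mathcal{K}^{m-j}_{2,a-j}(\Omega)$, I would rewrite the equation pointwise in $t$ as the elliptic problem
\[
L(x,t,D_x)\,\partial_{t^j}u \;=\; \partial_{t^j}f \;-\; \partial_{t^{j+2}}u \;-\; (\text{commutators}),
\]
with zero Dirichlet data on $\partial\Omega$. The key input is the elliptic regularity theorem for second-order operators on dihedral/Lipschitz domains in Kondratiev-type spaces: provided the strip in the complex plane corresponding to the weight parameter $a-j$ contains no eigenvalues of the operator pencil at the edge $l_0$, one obtains
\[
\|\partial_{t^j}u(t)\,|\mathcal{K}^{m-j}_{2,a-j}(\Omega)\| \lesssim \|\text{RHS}(t)\,|\mathcal{K}^{m-j-2}_{2,a-j-2}(\Omega)\|.
\]
The pencil for $L_0^{(2)}$ on the model dihedron of opening $\omega(P,t)$ has eigenvalues $k\pi/\omega(P,t)$, $k\in\mathbb{Z}$, and the restriction $a<\pi/\omega-1$ from \eqref{omega_max} is precisely the condition that guarantees the weight exponent $a-j+1-1 = a-j$ avoids the first positive eigenvalue across all $P\in l_0$ and $t\in[0,T]$. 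This is the main technical hurdle: one must verify uniformity in $t$ of the spectral gap (using compactness of $l_0\times[0,T]$ together with smoothness of $\omega$) and handle the commutator terms $[L,\partial_{t^j}]$ so that they live in the right weighted class, using the boundedness and smoothness assumptions on the coefficients.

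Finally, summing the resulting a priori estimates over $j=0,\ldots,m$ and absorbing lower-order contributions via induction on $j$ (starting from $j=m$, where the right-hand side involves only $\partial_{t^{m-1}}f$ and already-estimated quantities, and proceeding downward) gives the claimed inequality. Existence and uniqueness of the generalized solution at the base energy level follow from standard Galerkin approximation for hyperbolic equations; the higher regularity statements are then justified a posteriori by the estimates above combined with a density argument. The hardest step, as indicated, is the uniform pencil/eigenvalue analysis on the edge $l_0$ which underlies the choice of the admissible range of $a$.
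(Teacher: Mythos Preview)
The paper does not give its own proof of this lemma; it simply records it as a reformulation of \cite[Thms.~2.1,~2.2]{LT15}. You correctly identify this, and your outline of the underlying argument---hyperbolic energy estimates for the time-differentiated problems, followed by elliptic Kondratiev regularity applied pointwise in $t$ with the edge-pencil condition $a<\pi/\omega-1$ guaranteeing the relevant spectral strip is eigenvalue-free---is the standard route and matches what one finds in \cite{LT15}.
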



\remark{For the restriction on $a$ in Lemma \ref{thm-hyperbolic-weight-sob-reg} to make sense we require 
\[
m-1<\frac{\pi}{\omega}-1,\quad {\text{i.e.,}} \quad \omega<\frac{\pi}{m}.
\]
{Unfortunately, this  causes a restriction on the angle $\beta$ of our domain $\Omega$, i.e., $\beta$ has to be small, cf. \eqref{omega} and \eqref{omega_max}. }
}

\medskip

\section{Parabolic Besov regularity on polyhedral cones}\label{Sect-4}

\subsection{Besov regularity of linear parabolic PDEs}

{To prove our results on Besov regularity, we will heavily use embeddings of Kondratiev spaces into Besov spaces. The following theorem that  can be found in \cite[Sect.~5, Thm.~3]{Han15} will be essential in this context.}

\begin{theorem}\label{thm-hansen-1}
Let $D$ be {a} bounded polyhedral domain in $\mathbb{R}^d$. Furthermore, let $s, a\in \real$, $\gamma\in \nat_0$, and  suppose $\min(s,a)>\frac{\delta}{d}\gamma$, where $\delta$ denotes the dimension of the singular set (i.e. $\delta=0$ if there are only vertex singularities, $\delta=1$ if there are edge and vertex singularities etc.). Then there exists some $0<\tau_0\leq p$ such that 
\begin{equation}
\calk^{\gamma}_{p,a}(D)\cap B^s_{p,\infty}(D)\hookrightarrow B^{\gamma}_{\tau,\infty}(D)\hookrightarrow L_p(D),
\end{equation}
for all $\tau_{\ast}<\tau<\tau_0$, where $\frac{1}{\tau_{\ast}}=\frac {\gamma}{d}+\frac 1p$. 

\end{theorem}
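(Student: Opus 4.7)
The plan is to prove the embedding via the wavelet characterization of Besov spaces recalled in Theorem \ref{thm-wavelet-dec} (and its $q=\infty$ variant), combined with a careful splitting of the wavelet index set into cubes close to and far from the singular set $S$ of $D$. Writing $f = \sum_{(I,\psi)\in\Lambda}\langle f,\psi_I\rangle\psi_I$, the task reduces to bounding
\[
\sup_{j\ge 0} 2^{j(\gamma + d(\tfrac12 - \tfrac1\tau))}\Bigl(\sum_{(I,\psi)\in\mathcal{D}_j\times\Psi'}|\langle f,\psi_I\rangle|^\tau\Bigr)^{1/\tau}
\]
by a constant times $\|f\mid\mathcal{K}^\gamma_{p,a}(D)\| + \|f\mid B^s_{p,\infty}(D)\|$.

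For each scale $j$, I would split $\mathcal{D}_j = \mathcal{D}_j^{\mathrm{near}}\cup\mathcal{D}_j^{\mathrm{far}}$, where $\mathcal{D}_j^{\mathrm{near}}$ collects those $I\in\mathcal{D}_j$ whose support cube $Q(I)$ meets an $\mathcal{O}(2^{-j})$-neighbourhood of $S$. Since $\dim S = \delta$, we have $\#\mathcal{D}_j^{\mathrm{near}}\lesssim 2^{j\delta}$, whereas $\#\mathcal{D}_j^{\mathrm{far}}\lesssim 2^{jd}$ as usual. On the far part, the weight $\varrho(x)$ is comparable to a positive constant on $\supp\psi_I$, so by the vanishing moment and smoothness properties \eqref{wavelet-2}--\eqref{wavelet-3} a standard local estimate gives
\[
|\langle f,\psi_I\rangle| \lesssim 2^{-j(s+d(1/2-1/p))}\Bigl(\int_{Q(I)}|\nabla^{\lceil s\rceil} f|^p\,dx\Bigr)^{1/p},
\]
and summing in $\tau$ over $\mathcal{D}_j^{\mathrm{far}}$ (using H\"older with exponent $p/\tau$ and $\#\mathcal{D}_j^{\mathrm{far}}\lesssim 2^{jd}$) yields a bound of the form $2^{-j\gamma - jd(1/2-1/\tau)}\|f\mid B^s_{p,\infty}(D)\|$ with a gain of $2^{-j(s-\gamma)}$; this is summable (in the $\sup$-sense already decreasing) precisely because $s>\tfrac{\delta}{d}\gamma\ge 0$, in particular $s>0$.

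On the near part, I would exploit that on $\supp\psi_I$ one has $\varrho(x)\sim 2^{-j}$, hence
\[
\int_{Q(I)}|D^\gamma f|^p\,dx \sim 2^{jp(\gamma-a)}\int_{Q(I)}\varrho(x)^{p(\gamma-a)}|D^\gamma f|^p\,dx,
\]
and the classical local estimate using vanishing moments of order $\ge\gamma$ gives
\[
|\langle f,\psi_I\rangle| \lesssim 2^{-j(\gamma+d(1/2-1/p))}\cdot 2^{j(\gamma-a)}\,\Bigl(\int_{Q(I)}\varrho^{p(\gamma-a)}|D^\gamma f|^p\,dx\Bigr)^{1/p}.
\]
Raising to $\tau$, summing via H\"older over $\mathcal{D}_j^{\mathrm{near}}$ using $\#\mathcal{D}_j^{\mathrm{near}}\lesssim 2^{j\delta}$, and recalling $\tfrac{1}{\tau_*}-\tfrac{1}{p}=\tfrac{\gamma}{d}$, the prefactor becomes
\[
2^{j(\gamma+d(1/2-1/\tau))}\cdot 2^{-j(\gamma + d(1/2-1/p))}\cdot 2^{j(\gamma-a)}\cdot 2^{j\delta(1/\tau-1/p)} = 2^{j[(\gamma-a) + \delta(1/\tau-1/p) + d(1/p-1/\tau)]}.
\]
For $\tau=\tau_*$ the last two terms combine to $-\gamma(1-\delta/d)$, so the exponent reads $-(a-\tfrac{\delta}{d}\gamma)$, which is strictly negative by hypothesis; by continuity this persists in a right-open interval $(\tau_*,\tau_0)$, giving a uniformly bounded geometric series. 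Combining the two contributions yields the claimed embedding into $B^\gamma_{\tau,\infty}(D)$, and the second embedding $B^\gamma_{\tau,\infty}(D)\hookrightarrow L_p(D)$ for such $\tau$ is the classical DeVore--Jawerth--Popov Sobolev embedding on the adaptivity line.

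The main obstacle is the careful bookkeeping of the exponents on the near part, in particular identifying the sharp threshold $\min(s,a)>\tfrac{\delta}{d}\gamma$ as the exact condition ensuring both geometric sums converge, and verifying that the constant $\tau_0>\tau_*$ can indeed be chosen strictly greater than $\tau_*$ uniformly. A second technical point is justifying the local wavelet coefficient estimates for $f$ only in $\mathcal{K}^\gamma_{p,a}\cap B^s_{p,\infty}$ (rather than in a classical Sobolev class), which is handled by extending $f$ cube-by-cube and using that the wavelet inner product sees only the local regularity captured by the respective norm.
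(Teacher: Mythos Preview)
The paper does not prove this theorem itself---it is quoted from \cite{Han15}---but it proves the closely analogous Theorem \ref{emb-hyper-1} by exactly the wavelet-splitting strategy you sketch, so a comparison is meaningful. Your decomposition is correct in spirit, but you have interchanged the roles of the two norms on the two pieces, and this is a genuine gap. On your far set $\mathcal{D}_j^{\mathrm{far}}$ there are $\sim 2^{jd}$ cubes; applying H\"older with exponent $p/\tau$ and the Besov bound $(\sum_I|\langle f,\psi_I\rangle|^p)^{1/p}\lesssim 2^{-j(s+d(1/2-1/p))}\|f\mid B^s_{p,\infty}\|$ gives, after multiplying by the Besov prefactor $2^{j(\gamma+d(1/2-1/\tau))}$, a net factor $2^{j(\gamma-s)}$. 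This is bounded only if $s\ge\gamma$, whereas the hypothesis allows $s$ as small as $\tfrac{\delta}{d}\gamma+\varepsilon$, which for $\delta<d$ is strictly smaller than $\gamma$. Your claim that ``$s>0$'' suffices here is simply not what the arithmetic gives. Conversely, on your near set you invoke $|D^\gamma f|$ on cubes $Q(I)$ meeting the singular set; but for $f\in\mathcal{K}^\gamma_{p,a}$ the derivative $D^\gamma f$ is controlled only after multiplication by $\varrho^{\gamma-a}$, and $\varrho$ vanishes on $S$, so the Whitney estimate $\|f-P_I\mid L_p(Q(I))\|\lesssim|I|^{\gamma/d}|f|_{W^\gamma_p(Q(I))}$ is not available there.

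The correct assignment is the opposite one, exactly as in the proof of Theorem \ref{emb-hyper-1}: on the \emph{far} cubes ($\rho_I\ge 2^{-j}$) use the Whitney/vanishing-moment estimate together with the Kondratiev norm, stratifying further into annuli $\Lambda_{j,k}=\{k2^{-j}\le\rho_I<(k+1)2^{-j}\}$ so that $\varrho\sim k2^{-j}$ on each; summing over $k$ and $j$ then converges under $a>\tfrac{\delta}{d}\gamma$ and $\gamma>r$. On the \emph{near} cubes ($\rho_I<2^{-j}$), where there are only $\sim 2^{j\delta}$ of them, use H\"older and the $B^s_{p,\infty}$ bound; the resulting exponent is $\delta(\tfrac{1}{\tau}-\tfrac{1}{p})-s$, and at $\tau=\tau_\ast$ this equals $\tfrac{\delta}{d}\gamma-s<0$, persisting for $\tau$ in a right neighbourhood of $\tau_\ast$. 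Once you make this swap your outline becomes essentially the argument in \cite{Han15} (and in the paper's Theorem \ref{emb-hyper-1}).
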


\begin{remark}{It also holds that $u\in B^{\gamma}_{\tau,\infty}(D)$ for $\tau\leq \tau_{\ast}$ but these spaces are no longer embedded into $L_p(D)$. {Moreover, since we are interested in embeddings into the adaptivity scale $B^s_{\tau,\tau}(D)$, cf.  \eqref{adaptivityscale}, we later on make use of the embedding 
\[
 B^{\gamma}_{\tau,\infty}(D)\hookrightarrow  B^{\gamma-\varepsilon}_{\tau,\tau}(D), \qquad \varepsilon>0. 
\]
}}
\end{remark}

\medskip

The embedding from Theorem \ref{thm-hansen-1} immediately generalizes to the function spaces defined in \eqref{Kondratiev-3} as follows:

\begin{theorem}\label{thm-hansen-gen}
Let $D$ be some bounded polyhedral domain in $\mathbb{R}^d$ {and assume $k\in \nat_0$ and $0<q\leq \infty$}. Furthermore, let $s, a\in \real$, $\gamma\in \nat_0$, and  suppose $\min(s,a)>\frac{\delta}{d}\gamma$, where $\delta$ denotes the dimension of the singular set. Then there exists some $0<\tau_0\leq p$ such that 
{\begin{equation}\label{emb-hansen-gen-sob}
W^k_q((0,T),\calk^{\gamma}_{p,a}(D))\cap W^k_q((0,T),B^s_{p,\infty}(D))\hookrightarrow W^k_q((0,T),B^{\gamma}_{\tau,\infty}(D))  
\end{equation}}
for all $\tau_{\ast}<\tau<\tau_0$, where $\frac{1}{\tau_{\ast}}=\frac {\gamma}{d}+\frac 1p$. 
\end{theorem}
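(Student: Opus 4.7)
The plan is to lift the pointwise-in-time embedding from Theorem \ref{thm-hansen-1} to the vector-valued Sobolev setting, using the fact that the embedding is realized by a bounded linear operator and that such operators commute with weak time derivatives.

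First, I would invoke Theorem \ref{thm-hansen-1} to obtain a bounded linear embedding
\[
\iota\colon \calk^{\gamma}_{p,a}(D)\cap B^s_{p,\infty}(D)\;\hookrightarrow\; B^{\gamma}_{\tau,\infty}(D),
\]
with the norm estimate
\[
\|\iota(v)\,|\,B^{\gamma}_{\tau,\infty}(D)\|\;\leq\; C\bigl(\|v\,|\,\calk^{\gamma}_{p,a}(D)\|+\|v\,|\,B^s_{p,\infty}(D)\|\bigr),
\]
valid for every $\tau_{\ast}<\tau<\tau_0$. This step is essentially an application of the scalar result.

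Next, I would formulate and use the following general principle: whenever $T:X\to Y$ is a bounded linear map between (quasi-)Banach spaces and $u\in W^k_q(I,X)$, then $T\circ u\in W^k_q(I,Y)$ with $\partial_{t^j}(T\circ u)=T\circ \partial_{t^j}u$ for $j=0,\dots,k$, and $\|T\circ u\,|\,W^k_q(I,Y)\|\leq \|T\|\,\|u\,|\,W^k_q(I,X)\|$. This follows from the definition of the weak derivative via Bochner integrals: for every test function $\varphi\in\mathcal{D}(I)$, linearity and continuity of $T$ allow one to pull $T$ through the Bochner integral, so $(T\circ u,\partial_{t^j}\varphi)=T(u,\partial_{t^j}\varphi)=(-1)^jT(\partial_{t^j}u,\varphi)$, and the claim follows. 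Strong measurability of $T\circ u$ is inherited from that of $u$ since $T$ is continuous.

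Applying this principle with $T=\iota$, $X=\calk^{\gamma}_{p,a}(D)\cap B^s_{p,\infty}(D)$, and $Y=B^{\gamma}_{\tau,\infty}(D)$, we immediately obtain the desired embedding \eqref{emb-hansen-gen-sob} with norm estimate
\[
\|u\,|\,W^k_q(I,B^{\gamma}_{\tau,\infty}(D))\|\;\lesssim\; \|u\,|\,W^k_q(I,\calk^{\gamma}_{p,a}(D))\|+\|u\,|\,W^k_q(I,B^s_{p,\infty}(D))\|.
\]

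The main subtlety—rather than difficulty—lies in the quasi-Banach regime $\tau<1$, since the usual Bochner integration theory is formulated for Banach spaces. Here one either restricts to $p,s,a,\gamma$ so that the target space is still Banach ($\tau\geq 1$), or one invokes the extension of Bochner theory to $p$-Banach spaces (cf.\ Remark \ref{quasi-B-deriv}) and interprets the weak time derivatives accordingly. In either case, boundedness and linearity of $\iota$ suffice to commute it with the time derivative operation, so no genuinely new analytic input beyond Theorem \ref{thm-hansen-1} is required.
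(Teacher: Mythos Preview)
Your proposal is correct and takes essentially the same approach as the paper: both lift the pointwise embedding of Theorem~\ref{thm-hansen-1} to the time-dependent setting by applying it inside the $W^k_q$-norm. The paper does this by a direct norm computation (writing out the Sobolev norm and estimating $\|\partial_{t^l}u(\cdot,t)|X\|$ by $\|\partial_{t^l}u(\cdot,t)|X_1\cap X_2\|$ for each $t$), whereas you phrase it more abstractly as ``bounded linear maps commute with weak time derivatives''; since the embedding here is just the inclusion map, these amount to the same argument, and your explicit acknowledgment of the quasi-Banach subtlety for $\tau<1$ is a point the paper leaves implicit.
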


{
\begin{proof}
Put $X_1:=\calk^{\gamma}_{p,a}(D)$, $X_2:=B^s_{p,\infty}(D)$, and $X=B^{\gamma}_{\tau,\infty}(D)$. Then Theorem \ref{thm-hansen-1} states that 
$$X_1\cap X_2\hookrightarrow X, $$
i.e., for some $x\in X_1\cap X_2$ we have $\|x|X\|\lesssim \|x|X_1\cap X_2\|\sim \|x|X_1\|+\|x|X_2\|$. Using this we calculate for $I:=(0,T)$ that 
\begin{align*}
\|u|&W^k_q((0,T),B^{\gamma}_{\tau,\infty}(D))\|\\
&= \left\|u|W^k_q(I,X)\right\|=\left(\sum_{l=0}^k \|\partial_{t^l}u|L_{q}(I,X)\|^q\right)^{1/q}\\
&\lesssim   \left(\sum_{l=0}^k\int_I\|\partial_{t^l}u(\cdot,t)|X\|^q\ud t\right)^{1/q}\\
&\lesssim   \left(\sum_{l=0}^k\int_I\|\partial_{t^l}u(\cdot,t)|X_1\cap X_2\|^q\ud t\right)^{1/q}\\
&\sim   \left(\sum_{l=0}^k\int_I\|\partial_{t^l}u(\cdot,t)|X_1\|^q\ud t\right)^{1/q}+ \left(\sum_{l=0}^k\int_I\|\partial_{t^l}u(\cdot, t)|X_2\|^q\ud t\right)^{1/q}\\
&= \left\|u|W^k_q(I,X_1)\right\|+\left\|u|W^k_q(I,X_2)\right\|\\ 
&=\left\|u|W^k_q(I,X_1)\cap W^k_q(I,X_2)\right\|\\
&=\left\|u|W^k_q((0,T),\calk^{\gamma}_{p,a}(D))\cap W^k_q((0,T),B^s_{p,\infty}(D))\right\|,
\end{align*}
which establishes \eqref{emb-hansen-gen-sob}. 
\end{proof}
}

{
\remark{For $k=0$ the embedding \eqref{emb-hansen-gen-sob} in Theorem \ref{thm-hansen-gen} reads as 
\begin{equation}\label{hansen_gen_k0}
L_q((0,T),\calk^{\gamma}_{p,a}(D))\cap L_q((0,T),B^s_{p,\infty}(D))\hookrightarrow L_q((0,T),B^{\gamma}_{\tau,\infty}(D)). 
\end{equation}
}
}

\paragraph{Unbounded cone  versus bounded polyhedral domain} We wish to combine the results from \cite{LL15} as stated in Theorem \ref{thm-weighted-sob-reg} with the embedding results  in Theorem \ref{thm-hansen-gen}. The problem arises that Theorem \ref{thm-weighted-sob-reg} holds for unbounded cones $K\subset \real^3$ whereas the embedding results in Theorem \ref{thm-hansen-gen} are true for bounded polyhedral domains $D\subset \real^d$. In order to {avoid this problem} we consider the truncated cone $K_0$ as defined in \eqref{trunc-cone}.  
{Then, the additional difficulty occurs that the Kondratiev norm on the truncated cone is not just defined by restriction. Instead, the distance to the new corners produced by the truncation from considering $K_{0}$ instead of $K$ have to be taken into account. } We {solve} this problem by multiplying $u$ with a radial cut-off function ${\varphi}\in C_0^{\infty}(K_0)$ satisfying \\
\begin{minipage}{0.6\textwidth} 
\begin{equation}\label{cutoff}
 {\varphi}(x)\equiv \begin{cases}
1  & \text{on} \quad  \{|x|<r_0-\varepsilon\}\cap  K_0, \\
  0  & \text{on} \quad  \left\{|x|>r_0-\frac{\varepsilon}{2}\right\}\cap K_0.
 \end{cases}
 \end{equation}
 \end{minipage}\hfill
 \begin{minipage}{0.3\textwidth}
\includegraphics[width=3.5cm]{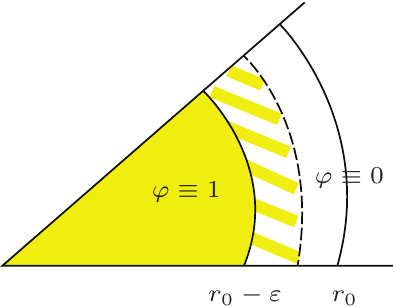}
\captionof{figure}{Illustration of cut-off function $\varphi$}
\end{minipage}\\
This truncation process does not induce serious restrictions for when it comes to practical applications, it is clear that only truncated cones can be considered. 
 Then the regularity of ${\varphi} u$ corresponds to the regularity of $u$ as stated in Theorem \ref{thm-weighted-sob-reg} and  we obtain  
 \begin{align*}
 \|{\varphi} u| L_2((0,T),\mathcal{K}^{\gamma}_{2,a'}(K_0))\|
& \lesssim \|{\varphi} u| L_2((0,T),\mathcal{K}^{\gamma}_{2,a'}(K))\|\\
&\leq c_{{\varphi}}\| u| L_2((0,T),\mathcal{K}^{\gamma}_{2,a'}(K))\|,
 \end{align*}
 from \eqref{multiplier} {with $\gamma=2m(\gamma_m+1)$ and $a'=a+2m(\gamma_m+1)$}.  
Now we are in a position to apply  the embedding results from Theorem \ref{thm-hansen-gen} {when $k=0$, cf. \eqref{hansen_gen_k0},} to the function ${\varphi} u$, which together with the regularity results for weighted Sobolev spaces from Theorem \ref{thm-weighted-sob-reg}  yield  maximal Besov regularity of the solution of the parabolic problem \eqref{parab-1a}. \\

\begin{theorem}[Parabolic Besov regularity I]\label{thm-parab-Besov}
Let $K\subset \real^3$ be a polyhedral cone. Let $\gamma\in \nat $ with  ${\gamma\geq 2m}$ and put $\gamma_m:=\lfloor \frac{\gamma-1}{2m}\rfloor$. Furthermore, let  $a\in \real$ with   ${a\in [-m,m]}$.  Assume that  the right hand side $f$ of \eqref{parab-1a} satisfies 
{
\begin{itemize}
\item[(i)] $\partial_{t^k} f\in L_2(K_T)\cap L_2((0,T),\mathcal{K}^{2m(\gamma_m-k)}_{2,a+2m(\gamma_m-k)}(K))$, \ $k=0,\ldots, \gamma_m$, \\ and 
$\partial_{t^{\gamma_m+1}} f\in L_2(K_T)$. 
\item[(ii)] $\partial_{t^k} f(x,0)=0$, \quad  $k=0,1,\ldots, {\gamma_m}.$
\end{itemize}
}
{Furthermore, let  Assumption \ref{assumptions}  hold for weight parameters $b=a+2m(\gamma_m-i)$, where $i=0,\ldots, \gamma_m$, and  $b'=-m$.  }
Let ${\varphi}$ denote the cutoff function  from \eqref{cutoff}. Then 
for the generalized solution $u\in {\mathring{W}}_2^{m,{\gamma_m+2}}(K_T)$ of problem \eqref{parab-1a}, we have 
\begin{equation}\label{parab-Besov}
{\varphi} u\in L_{2}((0,T),B^{\eta}_{\tau,\infty}(K)) \qquad \text{for all}\quad {0<\eta<\min(\gamma, 3m), } \quad  \frac 12 <\frac{1}{\tau}<\frac{\eta}{3}+\frac 12.     
\end{equation}
In particular, for any $\eta, \tau$ satisfying \eqref{parab-Besov}, we have the a priori estimate 
\begin{align*}
\|{\varphi} u|& L_{2}((0,T),B^{\eta}_{\tau,\infty}(K))\|\\
&\lesssim  \sum_{k=0}^{{\gamma_m}}\|\partial_{t^k} f|{L_2((0,T),{\mathcal{K}^{2m(\gamma_m-k)}_{2,a+2m(\gamma_m-k)}(K))}}\|+\sum_{k=0}^{{\gamma_m}+1}\|\partial_{t^k} f|{L_2(K_T)}\|. 
\end{align*}
\end{theorem}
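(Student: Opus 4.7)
The plan is to combine the weighted Sobolev regularity from Theorem \ref{thm-weighted-sob-reg} with the Kondratiev-to-Besov embedding from Theorem \ref{thm-hansen-gen} (in its $k=0$ form \eqref{hansen_gen_k0}), after reducing from the unbounded cone $K$ to the truncated (bounded) polyhedral domain $K_0$ via the cutoff function $\varphi$.

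First, I would invoke Theorem \ref{thm-weighted-sob-reg} with the given parameters. Applied with $l=-1$, it yields
\[
u\in L_{2}((0,T),\mathcal{K}^{2m(\gamma_m+1)}_{2,a+2m(\gamma_m+1)}(K)),
\]
together with the corresponding a priori estimate in terms of $\partial_{t^k}f$. Since $\gamma\leq 2m(\gamma_m+1)$, the embedding \eqref{kondratiev-emb} gives $u\in L_{2}((0,T),\mathcal{K}^{\eta}_{2,a+2m(\gamma_m+1)}(K))$ for any $0<\eta<\gamma$. Moreover, since $u\in \mathring W^{m,\gamma_m+2}_2(K_T)$, we have $u\in L_2((0,T),W^m_2(K))$, and the classical embedding $W^m_2\hookrightarrow B^m_{2,\infty}$ yields
\[
u\in L_2((0,T),B^m_{2,\infty}(K)).
\]

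The second step is the cutoff reduction. Multiplying by $\varphi\in C_0^{\infty}(K_0)$ from \eqref{cutoff}, the pointwise multiplier property \eqref{multiplier} shows that $\varphi u$ inherits the Kondratiev regularity, i.e.\ $\varphi u\in L_2((0,T),\mathcal{K}^{\eta}_{2,a+2m(\gamma_m+1)}(K_0))$; the analogous (standard) multiplier property for Besov spaces likewise gives $\varphi u\in L_2((0,T),B^m_{2,\infty}(K_0))$. Since $K_0$ is a bounded polyhedral domain, we are now in the setting of Theorem \ref{thm-hansen-gen}.

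In the third step, I apply the embedding \eqref{hansen_gen_k0} on $K_0\subset\mathbb{R}^3$ with $p=2$, Kondratiev smoothness $\eta$, weight $a':=a+2m(\gamma_m+1)$, Besov smoothness $s:=m$, and $\delta=1$ (vertex and edge singularities). The required condition $\min(s,a')>\frac{\delta}{d}\eta=\eta/3$ reduces to $m>\eta/3$ and $a+2m(\gamma_m+1)>\eta/3$: the first is exactly $\eta<3m$, and the second follows from $a\geq -m$, $\gamma_m\geq 0$, which force $a'\geq m>\eta/3$. The range $\frac{1}{\tau_*}=\frac{\eta}{3}+\frac 12$ gives the asserted range for $\tau$. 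Hence
\[
\varphi u\in L_2((0,T),B^{\eta}_{\tau,\infty}(K_0)),
\]
and extension by zero (since $\operatorname{supp}\varphi(\cdot)\subset\subset K$) yields $\varphi u\in L_2((0,T),B^{\eta}_{\tau,\infty}(K))$. The a priori estimate is obtained by chaining the continuity constants of the multiplier step, the embedding \eqref{hansen_gen_k0}, and the bound \eqref{weighted-sobolev-est} from Theorem \ref{thm-weighted-sob-reg}.

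The main obstacle I anticipate is bookkeeping rather than conceptual: verifying that all parameter restrictions of the chain (the allowed range of $a$ from Assumption \ref{assumptions}, the embedding condition $\min(s,a')>\eta/3$, and the range $\eta<\min(\gamma,3m)$ arising from the intersection of the Sobolev smoothness $m$ and the Kondratiev smoothness $\gamma$) are simultaneously consistent. One has to be careful that the weight $a'=a+2m(\gamma_m+1)$ inherited from the Kondratiev estimate is positive enough to dominate $\eta/3$, and that the extension-by-zero step is harmless because $\varphi u$ has support strictly inside $K_0$; apart from this, the argument is a direct composition of the ingredients already established.
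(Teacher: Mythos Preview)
Your proposal is correct and follows essentially the same route as the paper: Kondratiev regularity from Theorem \ref{thm-weighted-sob-reg}, Sobolev regularity from $u\in\mathring{W}^{m,\gamma_m+2}_2(K_T)$, the cutoff reduction to the bounded truncated cone, and finally the embedding \eqref{hansen_gen_k0} together with the parameter check $\min(m,a+2m(\gamma_m+1))=m>\eta/3$. If anything, you are slightly more explicit than the paper about the passage $K\rightsquigarrow K_0\rightsquigarrow K$ via cutoff and extension by zero, which the paper relegates to the discussion preceding the theorem.
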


\begin{proof}
According to Theorem  \ref{thm-weighted-sob-reg} by our assumptions we know  ${\varphi} u\in L_2((0,T), {\mathcal{K}^{2m(\gamma_m+1)}_{2,a+2m(\gamma_m+1)}(K)})$. Together with  Theorem  \ref{thm-hansen-gen} {(choosing $k=0$)} we obtain  
\begin{align*}
{\varphi} u\in &L_2((0,T),{\mathcal{K}^{2m(\gamma_m+1)}_{2,a+2m(\gamma_m+1)}(K)})\cap {\mathring{W}}_2^{m,{\gamma_m+2}}(K_T)\\
&\hookrightarrow  L_2((0,T),{\mathcal{K}^{2m(\gamma_m+1)}_{2,a+2m(\gamma_m+1)}(K)})\cap L_2((0,T),W_2^m(K))\\
&\hookrightarrow   L_2((0,T),{\mathcal{K}^{2m(\gamma_m+1)}_{2,a+2m(\gamma_m+1)}(K)})\cap L_2((0,T),B^m_{2,\infty}(K))\\
&{\hookrightarrow   L_2((0,T),\mathcal{K}^{\eta}_{2,a+2m(\gamma_m+1)}(K)\cap B^m_{2,\infty}(K))}\\
&\hookrightarrow    L_2((0,T),B^{\eta}_{\tau,\infty}(K)),   
\end{align*}
{where in the third step we use the fact that  {$ 2m(\gamma_m+1)\geq 2m\left(\frac{\gamma}{2m}-1+1\right)=\gamma$ and choose $\eta\leq \gamma$.}}  Moreover,   the {condition} on $a$ from Theorem \ref{thm-hansen-gen} yields  
$$
{m=}{\min(m,a+2m(\gamma_m+1))}>\frac{\delta}{d}\eta=\frac{\eta}{3}.   
$$ 
{Therefore, the upper bound for $\eta$ is   $$\eta<{\min(\gamma,3m)}. $$ }
Concerning the restriction on $\tau$, Theorem  \ref{thm-hansen-gen} with $\tau_0=2$ gives 
\[
\frac 12<\frac{1}{\tau}<\frac{1}{\tau^{\ast}}=\frac{\eta}{3}+\frac 12.
\]

\end{proof}

{
\remark{\label{discuss_a}
We discuss the role of the weight parameter in our Kondratiev spaces. Note that on the one hand we require $a+2m(\gamma_m+1)>0$ in order to apply the embedding of our Kondratiev spaces  into Besov spaces from Theorem \ref{thm-hansen-gen}. Since we assume $a\in [-m,m]$ this is always true. On the other hand it should be expected that the derivatives of the solution $u$ have singularities near the boundary of the polyhedral cone. Thus, looking at the highest derivative of $u(t)\in \calk^{2m(\gamma_m+1)}_{2,a+2m(\gamma_m+1)}(K)$ we see that we require 
\[
\sum_{|\alpha|=2m(\gamma_m+1)}\int_K \rho(x)^{-ap}|\uD^{\alpha}u(x,t)|^p\ud x<\infty, 
\]
hence, if $a<0$ the derivatives of the solution $u$ might be unbounded near  the boundary of $K$. From this it follows that the range 
\[
{-m}<a<0 
\]
is the most interesting for our considerations. 
}
}

{
\remark{\label{gen-thm-parab-Besov}
The above theorem relies on the fact that  problem \eqref{parab-1a} has a generalized solution $u\in W^{m,{\gamma_m+2}}_2(K_T)={W^{\gamma_m+1}_2}((0,T), W^m_2(K))\cap {W^{\gamma_m+2}_2}((0,T),L_2(K))\hookrightarrow L_2((0,T), W^m_2(K))$, cf. Proposition \ref{GenSol_Sobolev}.  We strongly believe that (in good  agreement with the elliptic case) this result {could be improved by studying the regularity of \eqref{parab-1a} in fractional Sobolev spaces $W^s_2(K)$, $s\geq 0$, cf. \eqref{slobodeckij} and the explanations given. }
In this case (assuming that the generalized solution of \eqref{parab-1a} satisfies $u\in L_2((0,T),W^s_2(K))$ for some $s>0$) under the assumptions of Theorem \ref{thm-parab-Besov}, using Theorem  \ref{thm-weighted-sob-reg} and Theorem \ref{thm-hansen-gen} {(with $k=0$)}, we {would} obtain 
\[
{\varphi} u\in  L_2((0,T),\mathcal{K}^{\eta}_{2,a'}(K))\cap L_2((0,T),W_2^s(K))\hookrightarrow L_2((0,T),B^{\eta}_{\tau,\infty}(K)),
\]
where $a'=a+ 2m(\gamma_m+1)\geq a+2m$ and again $\frac 12<\frac{1}{\tau}<\frac{\eta}{3}+\frac 12$ but the restriction on $\gamma$ now reads as 
\[
\eta<3\min(s, a').  
\]
For general Lipschitz domains {and $m=1$} we expect {that the solution to \eqref{parab-1a} is contained in $W^s_2(K)$ for} all $s<\frac 32$ (as was shown in the elliptic case for the Poisson equation in \cite{JK95}), which then leads to $\eta<\frac 92$. For convex domains it probably even  holds that $s=2$ (for the heat equation this was already proven in \cite{Wo07}), which in turn would yield  $\eta<6$. 
{However, to establish these kind of regularity results is clearly beyond the scope of this paper and will be the topic of further studies.}}
}

{
\expl{\label{ex_heat_eq} As a parabolic model case for \eqref{parab-1a} we consider the heat equation 
\begin{eqnarray*}
\partial_t u-\Delta u&=&f  \ \text{ on }{K_{T}},\\
u\big|_{t=0}&=&0  \ \text{ on } {K}, 
\end{eqnarray*}
{where $f$ satisfies the assumptions of Theorem \ref{thm-parab-Besov} for some $\gamma\geq 2m$ that we can choose large enough. }
For the parameters $m=1$  and $s=m=1$ Theorem \ref{thm-parab-Besov}  yields the  maximal Besov regularity
\begin{equation}\label{max_B_heat}
{\eta<\min(3,\gamma)=3,}  
\end{equation}
for the solution ${\varphi} u$ of \eqref{parab-1a}.   If we additionally assume that our polyhedral cone $K_0$ is convex, i.e., 
$\theta_k\in (0,\pi)$ for all $k=1, \ldots, n$, we can do even better. In this case now $s=2$, cf. \cite[Thm.~6.2]{Wo07}, thus, according to what was said in Remark \ref{gen-thm-parab-Besov}, the upper bound for the maximal Besov regularity is 
\[
{\eta<\min(3s,\gamma)= 3\cdot 2=6, }
\]
for $0\leq a<\min\left(1,\frac{\pi}{\theta_k}-1\right)$, cf. Remark \ref{discuss_aa}. 
}
}

{
Also Theorem \ref{thm-weighted-sob-reg-2} gives rise to the following parabolic Besov regularity results. 
}

{
\begin{theorem}[Parabolic Besov regularity II]\label{thm-parab-Besov-2}
Let $K\subset \real^3$ be a polyhedral cone. Let $\gamma\in \nat $ with  ${\gamma\geq 2m}$. Moreover, let  $a\in \real$ with   ${a\in [-m,m]}$.  Assume that  the right hand side $f$ of \eqref{parab-1a} satisfies 
\begin{itemize}
\item[(i)] $f\in \bigcap_{l=0}^{\infty}W^l_2((0,T),L_2(K)\cap \calk^{\gamma-2m}_{2,a}(K))$. 
$\partial_{t^{\gamma_m+1}} f\in L_2(K_T)$. 
\item[(ii)] $\partial_{t^l} f(x,0)=0$, \quad  $l\in \nat_0$. 
\end{itemize}
Furthermore, let  Assumption \ref{assumptions}  hold for weight parameters $b=a$ and  $b'=-m$.  
Let ${\varphi}$ denote the cutoff function  from \eqref{cutoff}. Then 
for the generalized solution $\bigcap_{l=0}^{\infty}u\in {\mathring{W}}_2^{m,{l+1}}(K_T)$ of problem \eqref{parab-1a}, we have 
\begin{equation}\label{parab-Besov2}
{\varphi} u\in L_{2}((0,T),B^{{\eta}}_{\tau,\infty}(K)) \quad \text{for all}\quad {0<{\eta<\min(\gamma,3m)}, } \quad  \frac 12 <\frac{1}{\tau}<\frac{{\eta}}{3}+\frac 12.     
\end{equation}
In particular, for any ${\eta}, \tau$ satisfying \eqref{parab-Besov2}, we have the a priori estimate 
\begin{align*}
\|{\varphi} u|& L_{2}((0,T),B^{{\eta}}_{\tau,\infty}(K))\|\\
&\lesssim  \sum_{k=0}^{\gamma-2m}\|\partial_{t^k} f|{L_2((0,T),{\mathcal{K}^{\gamma-2m}_{2,a}(K))}}\|+\sum_{k=0}^{(\gamma-2m)+1}\|\partial_{t^k} f|{L_2(K_T)}\|. 
\end{align*}
\end{theorem}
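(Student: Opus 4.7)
The plan is to mirror the proof of Theorem~\ref{thm-parab-Besov} almost line for line, simply replacing the appeal to Theorem~\ref{thm-weighted-sob-reg} by the stronger Theorem~\ref{thm-weighted-sob-reg-2}. Concretely, I would first invoke Theorem~\ref{thm-weighted-sob-reg-2} with Kondratiev smoothness parameter equal to $\gamma$; under the hypotheses on $f$ and the stated version of Assumption~\ref{assumptions}, the $l=0$ case yields
\[
u\in L_2((0,T),\mathcal{K}^{\gamma}_{2,a+2m}(K)),
\]
together with an a priori estimate in terms of $\|\partial_{t^k}f\|$ norms. At the same time, $u\in\mathring{W}^{m,1}_2(K_T)$ (from Proposition~\ref{GenSol_Sobolev}) gives $u\in L_2((0,T),W^m_2(K))\hookrightarrow L_2((0,T),B^m_{2,\infty}(K))$.

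The next step is the localization to the truncated cone $K_0$, which is a bounded polyhedral domain and therefore admissible for Theorem~\ref{thm-hansen-gen}. Multiplying by the cutoff $\varphi\in C_0^\infty(K_0)$ from \eqref{cutoff} and using \eqref{multiplier} (together with the analogous multiplier property in $B^m_{2,\infty}$), I would obtain
\[
\varphi u\in L_2((0,T),\mathcal{K}^{\gamma}_{2,a+2m}(K_0))\cap L_2((0,T),B^m_{2,\infty}(K_0)),
\]
with norms bounded by those of $u$ on $K$.

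Finally, I would apply the $k=0$ form of Theorem~\ref{thm-hansen-gen}, that is \eqref{hansen_gen_k0}, with $p=2$, Kondratiev smoothness $\gamma$ and weight $a+2m$, and Besov smoothness $s=m$. The condition $\min(s,a+2m)>\tfrac{\delta}{d}\eta=\tfrac{\eta}{3}$ becomes, since $a\geq -m$ forces $a+2m\geq m$, simply $\eta<3m$; combined with the obvious Kondratiev constraint $\eta\leq \gamma$ this gives the stated range $0<\eta<\min(\gamma,3m)$. The admissible $\tau$'s are exactly those in $\tau_\ast<\tau<\tau_0=p=2$, i.e.\ $\tfrac12<\tfrac1\tau<\tfrac{\eta}{3}+\tfrac12$. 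Chaining the continuity of the embedding with the a priori estimate of Theorem~\ref{thm-weighted-sob-reg-2} (for $l=0$, $\eta=\gamma$) produces the final a priori bound involving $\sum_{k=0}^{\gamma-2m}\|\partial_{t^k}f\|_{L_2(I,\mathcal{K}^{\gamma-2m}_{2,a}(K))}$ and $\sum_{k=0}^{\gamma-2m+1}\|\partial_{t^k}f\|_{L_2(K_T)}$.

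I do not anticipate a genuine obstacle: the two ingredients — the Kondratiev regularity for $u$ and the Kondratiev-into-Besov embedding — are already in place, and the only mild subtlety (that Theorem~\ref{thm-hansen-gen} lives on bounded polyhedral domains while $u$ is defined on the unbounded cone $K$) is handled exactly as in Theorem~\ref{thm-parab-Besov} by passing to $\varphi u$ on $K_0$ and then extending by zero. Bookkeeping of the parameter ranges $\eta<\min(\gamma,3m)$ and $1/2<1/\tau<\eta/3+1/2$ is the only real work.
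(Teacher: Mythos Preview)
Your proposal is correct and follows essentially the same route as the paper: invoke Theorem~\ref{thm-weighted-sob-reg-2} (case $l=0$) for Kondratiev regularity $u\in L_2((0,T),\mathcal{K}^{\gamma}_{2,a+2m}(K))$, combine with the Sobolev regularity $u\in L_2((0,T),W^m_2(K))\hookrightarrow L_2((0,T),B^m_{2,\infty}(K))$, multiply by the cutoff $\varphi$, and apply the embedding~\eqref{hansen_gen_k0} to read off the parameter ranges $\eta<\min(\gamma,3m)$ and $\tfrac12<\tfrac1\tau<\tfrac{\eta}{3}+\tfrac12$. The bookkeeping you outline for the a priori estimate is exactly what the paper does.
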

}

{
\begin{proof}
According to Theorem  \ref{thm-weighted-sob-reg-2} by our assumptions we know  
${\varphi} u\in L_2((0,T), {\mathcal{K}^{\gamma}_{2,a+2m}(K)})$. Together with  Theorem  \ref{thm-hansen-gen} {(choosing $k=0$)} we obtain  
\begin{align*}
{\varphi} u\in &L_2((0,T),{\mathcal{K}^{\gamma}_{2,a+2m}(K)})\cap {\mathring{W}}_2^{m,{1}}(K_T)\\
&\hookrightarrow  L_2((0,T),\mathcal{K}^{\gamma}_{2,a+2m}(K))\cap L_2((0,T),W_2^m(K))\\
&\hookrightarrow   L_2((0,T),\mathcal{K}^{\gamma}_{2,a+2m}(K))\cap L_2((0,T),B^m_{2,\infty}(K))\\
&\hookrightarrow   L_2((0,T),\mathcal{K}^{\eta}_{2,a+2m}(K)\cap B^m_{2,\infty}(K))\\
&\hookrightarrow    L_2((0,T),B^{{\eta}}_{\tau,\infty}(K)),   
\end{align*}
 where {$\eta \leq \gamma$ in the second but last line. Moreover, } the condition on $a$ from Theorem \ref{thm-hansen-gen} yields  
$$
m=\min(m,a+2m)>\frac{\delta}{d}\eta=\frac{{\eta}}{3}.   
$$ 
{Therefore, the upper bound for ${\eta}$ is   $${\eta<\min(3m,\gamma)}. $$ }
Concerning the restriction on $\tau$, Theorem  \ref{thm-hansen-gen} with $\tau_0=2$ gives 
\[
\frac 12<\frac{1}{\tau}<\frac{1}{\tau^{\ast}}=\frac{{\eta}}{3}+\frac 12.
\]
\end{proof}
}

\subsection{Besov regularity of  nonlinear parabolic PDEs}\label{Subsect-4.2}


\subsubsection{The fundamental problem}

We modify \eqref{parab-1a} and now consider for some $\varepsilon>0$, $M\in \nat$,  the following nonlinear parabolic problem 
\begin{equation} \label{parab-nonlin-1}
\left\{\begin{array}{rl}
\frac{\partial }{\partial t}u+(-1)^mL(x,t;D_x)u +\varepsilon u^{M}\ =\ f \, &  \text{ in } K_T, \\
\frac{\partial^{k-1}u}{\partial \nu^{k-1}}\Big|_{\Gamma_{j,T}}\ =\ 0, & \   k=1,\ldots, m, \ j=1,\ldots, n,\\ 
u\big|_{t=0}\ =\ 0 \, & \text{ in } K. 
\end{array} \right\}
\end{equation}

We are interested in the Besov regularity of solutions $u$ of problem \eqref{parab-nonlin-1}. Our strategy to {reach}  this goal is as follows. 
{We show that the regularity estimates in {Kondratiev and Sobolev spaces  as stated in Theorem \ref{thm-weighted-sob-reg} and Proposition \ref{GenSol_Sobolev}} carry over to \eqref{parab-nonlin-1}, provided that $\varepsilon$ is sufficiently small. Then, we proceed as in the proof of Theorem \ref{parab-Besov}, i.e., we once again use embedding results of Kondratiev  spaces into Besov spaces. To establish Kondratiev regularity we interpret \eqref{parab-nonlin-1} as a fixed point problem in the following way. 
}
 Let $D$ and $S$ be Banach-spaces ({$D$ and $S$ will be specified in the theorem below}) and let $\tilde{L}^{-1}:D\rightarrow S$ be the linear operator defined  via
\begin{equation} \label{tildeL}
\tilde{L}u:=\frac{\partial}{\partial t}u+(-1)^mLu. 
 \end{equation}
 {Equation  \eqref{parab-nonlin-1}  is equivalent to 
\[
\tilde{L}u=f-\varepsilon u^{M}=:Nu,
\] 
where $N:S\rightarrow D$ is a nonlinear operator. If we can show that $N$ maps $S$ into $D$, then a solution to \eqref{parab-nonlin-1} is a fixed point of the problem }
\[
(\tilde{L}^{-1}\circ N)u=u.
\]
Our aim is to apply Banach's fixed point theorem, which will also guarantee uniqueness of the solution if we can show that  $T:=(\tilde{L}^{-1}\circ N): S_0\rightarrow S_0$ is a contraction mapping, i.e., 
\[
\|T(x)-T(y)|S\|\leq q\|x-y|S\| \quad \text{for all}\quad x,y\in S_0, \quad q\in [0,1),
\]
where the corresponding metric space $S_0\subset S$ is a small {closed} ball with center $\tilde{L}^{-1}f$  (the solution of the corresponding linear problem) and suitably chosen radius $R>0$. \\

\subsubsection{Nonlinear regularity results}


\begin{theorem}[Nonlinear Kondratiev regularity]\label{nonlin-B-reg1}
Let $\tilde{L}$ and $N$ be as described above. Assume the assumptions of Theorem \ref{thm-weighted-sob-reg} are satisfied and, additionally, we have {$\gamma_m\geq 1$}, $m\geq 2$,  and {$a\geq -\frac 12$}.  
Let 
\[
D_1:=\bigcap_{k=0}^{\gamma_m}W_2^{k}((0,T),\mathcal{K}^{2m(\gamma_m-k)}_{2,a+2m(\gamma_m-k)}(K)), \quad 
D_2:=W^{\gamma_m+1}_2((0,T),L_2(K))
\]
and consider the data space 
\begin{align*}
D &:=\{f\in D_1 
\cap D_2: \  
\partial_{t^k} f(\cdot,0)=0, \quad k=0,\ldots, \gamma_m\}.   
\end{align*}
Moreover, let 
\begin{align*}
S_1&:= \bigcap_{k=0}^{\gamma_m+1}W_2^{k}((0,T),\mathcal{K}^{2m(\gamma_m-(k-1))}_{2,a+2m(\gamma_m-(k-1))}(K)), \quad \\ 
S_2&:={\mathring{W}^{m,\gamma_m+2}_2(K_T)}, 
\end{align*}
and  consider the solution space 
$S:=S_1\cap S_2$. Suppose that $f\in D$
and put  $\eta:=\|f|D\|$ and $r_0>1$. Moreover, we choose $\varepsilon >0$ so small that 
\[
{
\eta^{2(M-1)} \|\tilde{L}^{-1}\|^{2M-1}\leq \frac{1}{{c}\varepsilon M}(r_0-1)\left(\frac{1}{r_0}\right)^{2M-1}, \qquad \text{if}\quad  r_0\|\tilde{L}^{-1}\|\eta>1,
}
\]
and 
\[\|\tilde{L}^{-1}\|<\frac{r_0-1}{r_0}\left(\frac{1}{{c}\varepsilon M}\right), \qquad \text{if}\quad  r_0\|\tilde{L}^{-1}\|\eta<1,\]
{where $c>0$ denotes the  constant in  \eqref{est-ab} resulting from our estimates below. }
Then there exists a unique  solution $u\in S_0\subset S$ of problem \eqref{parab-nonlin-1}, where $S_0$ denotes a small ball  around $\tilde{L}^{-1}f$ (the solution of the corresponding linear problem) with radius $R=(r_0-1)\eta \|\tilde{L}^{-1}\|$. 
\end{theorem}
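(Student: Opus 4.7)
The plan is to reformulate \eqref{parab-nonlin-1} as a fixed-point problem $u=(\tilde L^{-1}\circ N)u=:Tu$ on a small closed ball $S_0\subset S$ of radius $R=(r_0-1)\eta\|\tilde L^{-1}\|$ centered at the linear solution $u_{\mathrm{lin}}:=\tilde L^{-1}f$, and apply Banach's fixed point theorem. The boundedness of $\tilde L^{-1}:D\to S$ is already provided: indeed, the $S_2$-part is exactly Proposition \ref{GenSol_Sobolev} with $l=\gamma_m+1$, and the $S_1$-part is the \emph{a priori} estimate \eqref{weighted-sobolev-est} of Theorem \ref{thm-weighted-sob-reg}. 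So the operator norm $\|\tilde L^{-1}\|$ of $\tilde L^{-1}:D\to S$ is a well-defined finite quantity, and the given smallness conditions on $\varepsilon$ involve only this constant together with $\eta=\|f|D\|$.

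The main technical work is to show that $N:S\to D$ is well-defined with the estimate
\[
\|u^M|D\|\leq c\,\|u|S\|^M,\qquad \|u^M-v^M|D\|\leq c\,M\bigl(\|u|S\|+\|v|S\|\bigr)^{M-1}\|u-v|S\|,
\]
where $c$ does not depend on $u,v$. For the first component $D_1$ of $D$ one applies the Leibniz rule to expand
\[
\partial_{t^k}(u^M)=\sum_{k_1+\cdots+k_M=k}\binom{k}{k_1,\ldots,k_M}\,\partial_{t^{k_1}}u\cdots\partial_{t^{k_M}}u,
\]
and estimates each factor in an appropriate Kondratiev space using the inductive definition of $S_1$. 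The key ingredient is the pointwise multiplier result from Corollary \ref{thm-pointwise-mult-2}: the restrictions $m\geq 2$, $d=3$, $p=2$ ensure $m>d/p$ so that $\calk^{m}_{p,a}(K)$ is an algebra (part (i)), while $a\geq -\tfrac12$ together with shifting by Theorem \ref{thm-lift-kondratiev} matches the threshold $a\geq d/p-1=\tfrac12$ needed after the shift so that the mixed-weight multiplication (part (ii)) applies to factors $\partial_{t^{k_j}}u\in\calk^{2m(\gamma_m-(k_j-1))}_{2,a+2m(\gamma_m-(k_j-1))}(K)$. Distributing the available smoothness and weight budget among the $M$ factors according to $k_1+\cdots+k_M=k$ yields $\partial_{t^k}(u^M)\in L_2((0,T),\calk^{2m(\gamma_m-k)}_{2,a+2m(\gamma_m-k)}(K))$ for $0\leq k\leq\gamma_m$. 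For the $D_2$-component one argues analogously, bounding $\partial_{t^{\gamma_m+1}}(u^M)$ in $L_2(K_T)$ by Sobolev embedding $W^m_2(K)\hookrightarrow L_\infty(K)$ (valid since $m\geq 2> d/2$) on the lower-order factors and using the $S_2$-norm on the top-order factor. The homogeneous trace conditions $\partial_{t^k}(u^M)(\cdot,0)=0$ for $k=0,\ldots,\gamma_m$ follow from $u(\cdot,0)=0$ and the Leibniz expansion.

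With these multilinear estimates at hand, the contraction step is routine. Since $u^M-v^M=(u-v)\sum_{j=0}^{M-1}u^j v^{M-1-j}$ one obtains
\[
\|Tu-Tv|S\|=\varepsilon\|\tilde L^{-1}(u^M-v^M)|S\|\leq c\,\varepsilon M\|\tilde L^{-1}\|\bigl(\|u|S\|+\|v|S\|\bigr)^{M-1}\|u-v|S\|,
\]
and for $u,v\in S_0$ we have $\|u|S\|,\|v|S\|\leq\|u_{\mathrm{lin}}|S\|+R\leq r_0\eta\|\tilde L^{-1}\|$. The stated smallness assumption on $\varepsilon$ is precisely what is needed to conclude both $\|Tu-u_{\mathrm{lin}}|S\|\leq R$ (so $T:S_0\to S_0$) and a Lipschitz constant $q<1$ (both cases $r_0\|\tilde L^{-1}\|\eta\gtrless 1$ are treated by rearranging the same inequality $c\varepsilon M(r_0\eta\|\tilde L^{-1}\|)^{M-1}\|\tilde L^{-1}\|\leq(r_0-1)/r_0$). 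Banach's fixed point theorem then yields a unique $u\in S_0$ with $Tu=u$, i.e.\ a unique solution of \eqref{parab-nonlin-1} in the ball $S_0$.

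The hard part is the bookkeeping in the second step: one must verify the algebra/multiplier hypotheses of Corollary \ref{thm-pointwise-mult-2} for \emph{every} combination $(k_1,\ldots,k_M)$ arising in the Leibniz expansion and check that the resulting smoothness index $\sum 2m(\gamma_m-(k_j-1))$ and weight index $\sum[a+2m(\gamma_m-(k_j-1))]$ admit the successive reductions (via Theorem \ref{thm-lift-kondratiev}) down to the target space $\calk^{2m(\gamma_m-k)}_{2,a+2m(\gamma_m-k)}(K)$; this is where the restrictions $\gamma_m\geq 1$, $m\geq 2$, $a\geq -\tfrac12$ enter in an essential way.
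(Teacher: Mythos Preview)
Your overall strategy coincides with the paper's: recast \eqref{parab-nonlin-1} as the fixed-point equation $u=\tilde L^{-1}(f-\varepsilon u^M)$, use Theorem~\ref{thm-weighted-sob-reg} and Proposition~\ref{GenSol_Sobolev} for the boundedness of $\tilde L^{-1}:D\to S$, control $u^M$ in $D$ via Leibniz and the multiplier results of Corollary~\ref{thm-pointwise-mult-2} (for $D_1$) and a Sobolev-type product estimate (for $D_2$), and close with Banach's fixed point theorem. That is exactly what the paper does.

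There is, however, a mismatch between your contraction constant and the one encoded in the theorem's hypotheses. Your estimate yields a Lipschitz factor $c\varepsilon M\|\tilde L^{-1}\|(\|u|S\|+\|v|S\|)^{M-1}$, hence a smallness condition with exponents $M-1$ on $\eta$ and $M$ on $\|\tilde L^{-1}\|$. The theorem, by contrast, carries exponents $2(M-1)$ and $2M-1$, together with the dichotomy $r_0\|\tilde L^{-1}\|\eta\gtrless 1$. The paper obtains precisely these numbers because it does \emph{not} use your direct multinomial expansion. Instead it writes $u^M-v^M=(u-v)\sum_j u^jv^{M-1-j}$, applies Leibniz twice, and then expands each $\partial_{t^r}u^j$ and $\partial_{t^{k-l-r}}v^{M-1-j}$ by Fa\`a di Bruno. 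Since the number of derivative factors produced by Fa\`a di Bruno is variable, the paper bounds each $L_\infty$-in-time factor by $\max(\|\cdot\|,1)$ and collects everything into $\max(R+\|\tilde L^{-1}\|\eta,1)^{2(M-1)}$; the $\max(\cdot,1)$ is what forces the two cases in the statement, and the doubling to $2(M-1)$ arises from the (admittedly generous) way the two Fa\`a di Bruno sums are merged. Your multinomial route, with exactly $M$ factors in every term, would in fact give a sharper condition---but it is not the one written in the theorem, so your sentence ``the stated smallness assumption on $\varepsilon$ is precisely what is needed'' is not accurate as written.

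One further point: your verification of the trace conditions $\partial_{t^k}(u^M)(\cdot,0)=0$ assumes $u(\cdot,0)=0$, but for an arbitrary $u\in S_0\subset S$ this is not part of the definition of $S$. The paper spends some effort here, first showing (via the PDE and the compatibility conditions on $f$) that the linear solution $\tilde L^{-1}f$ satisfies $\partial_{t^k}u(\cdot,0)=0$ for $k=0,\dots,\gamma_m+1$, and then arguing that the $L_2$-trace of $\partial_{t^k}(u^M)$ is controlled by powers of $\|\partial_{t^l}u(\cdot,0)|W^m_2(K)\|$. To make your argument airtight you should either restrict $S_0$ to the (closed) affine subspace of $S$ on which the relevant time-traces vanish, or note that this subspace is invariant under $T$ and contains $\tilde L^{-1}f$.
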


\begin{proof} 
Let $u$ be the solution of the linear problem $\tilde{L}u=f$.  Theorem \ref{thm-weighted-sob-reg} and Proposition \ref{GenSol_Sobolev} show  that 
$$\tilde{L}^{-1}: D\rightarrow S   
$$
is a bounded operator. 
{We need to show that 
\begin{equation}\label{prop-uM}
u^M\in D 
\end{equation} 
in order to establish the desired mapping properties of the nonlinear part $N$,  
i.e.,
\[
Nu=f-\varepsilon u^M\in D.  
\]
The fact that $u^M\in D_1\cap D_2$ follows from the estimate \eqref{est-ab}: {in particular, taking $v=0$ in \eqref{est-ab} we get an estimate from above for $\|u^M|D\|$. The upper bound depends on $\|u|S\|$ and several constants which depend on $u$ but are finite whenever we have $u\in S$, see also \eqref{est-4} and \eqref{est-4a}. The dependence on $R$ in \eqref{est-ab} comes from the fact that we choose $u\in B_R(\tilde{L}^{-1}f)$ in $S$ there. {However, the same argument can also be applied to an arbitrary $u\in S$; this would result in a different constant $\tilde{c}$.}}   Since $u\in S\hookrightarrow W^{\gamma_m+2}_2((0,T), L_2(K))\hookrightarrow C^{\gamma_m+1}((0,T), L_2(K))$ we see that the trace operator $\mathrm{Tr}\left(\partial_{t^k}u\right):=\left(\partial_{t^k}u\right)(\cdot,0)$ is well defined for $k=0,\ldots, \gamma_m+1$ (in particular, since $u\in S\hookrightarrow W^{\gamma_m+1}_2((0,T),{\mathring{W}^m_2}(K))\hookrightarrow C^{\gamma_m}((0,T),{\mathring{W}^m_2}(K))$  the values of the trace operator $\mathrm{Tr}\left(\partial_{t^k}u\right)$, $k=0,\ldots, \gamma_m$, belong to  $W^m_2$).
{We show  that $\mathrm{Tr}(\partial_{t^k}u)=0$ for $k=0,\ldots, \gamma_m+1$. For this we use the assumptions on $f$, i.e., $\partial_{t^k}f(\cdot,0)=0$ for $k=0,\ldots, \gamma_m$,  the initial assumption $u(\cdot,0)=0$ in \eqref{parab-nonlin-1}, 
and the fact that
\begin{equation}\label{interchange_trace}
\mathrm{Tr}(Lu)(x,t) = 0. 
\end{equation}
Let us briefly sketch the proof of \eqref{interchange_trace}.  We show \eqref{interchange_trace} first for 
compactly supported functions $u \in C^{\infty}(K_T)$.  Then, the result follows
by density arguments.  
For these functions, we get
\[
\mathrm{Tr} (Lu)(x,t)=\lim_{t\rightarrow 0} Lu(x,t)=L(x,0;D_x)\lim_{t\rightarrow 0}u(x,t)=0,
\]
where the second step follows 
 %
by our smoothness assumptions on the coefficients of $L$ and the fact that $\lim_{t\rightarrow 0}D^{\alpha}_x u(x,t)=D^{\alpha}_x \left(\lim_{t\rightarrow 0}u(x,t)\right)$ (this is clear for smooth functions $u\in C^{\infty}(K_T)$, then using density we get the same  for $u\in S$). With this we see that } 
 \[
(\partial_tu)(\cdot,0)+(-1)^mLu(\cdot,0)=f(\cdot, 0), \qquad \text{i.e.,}\qquad  (\partial_tu)(\cdot,0)=0. 
 \] 
 Differentiation yields 
 \begin{align*}
(\partial_{t^2}u)(\cdot,0)+(-1)^m((\partial_tL)u(\cdot,0)+L(\partial_tu)(\cdot,0))&=\partial_tf(\cdot, 0). 
 \end{align*}
 i.e., $(\partial_{t^2}u)(\cdot,0)=0$ {using a similar argumentation as in \eqref{interchange_trace}}.  
 By induction we deduce  that $(\partial_{t^k}u)(\cdot,0)=0$ for all $k=0,\ldots, \gamma_m+1$ (in particular, $\|\left(\partial_{t^k}u\right)(\cdot, 0)|W^m_2(K)\|=0$ for $k=0,\ldots, \gamma_m$). 
 Moreover, since by Theorem \ref{thm-sob-emb} (generalized Sobolev embedding)  
 \begin{align*}
 u^M \in D_1\cap D_2 &\hookrightarrow W^{\gamma_m+1}_2((0,T), L_2(K)) 
 \hookrightarrow C^{\gamma_m}((0,T), L_2(K)), 
 \end{align*}
 we see that the trace operator $\mathrm{Tr}\left(\partial_{t^k}u^M\right):=\left(\partial_{t^k}u^M\right)(\cdot,0)$ is well defined for $k=0,\ldots, \gamma_m$. By \eqref{est-3a} below the term $\|\left(\partial_{t^k}u^M\right)(\cdot, 0)|L_2(K)\|$ is estimated from above by {powers of} $\|\left(\partial_{t^l}u\right)(\cdot, 0)|W^m_2(K)\|$, $l=0,\ldots, k$. 
 Since all these terms are equal to zero, we obtain $\left(\partial_{t^k}u^M\right)(\cdot, 0)=0$ in the $L_2$-norm for $k=0,\ldots, \gamma_m$. 
 }
Hence, {using \eqref{prop-uM}} we can apply Theorem  \ref{thm-weighted-sob-reg}  now with right hand side $Nu$. 
Since 
\[
 (\tilde{L}^{-1}\circ N)(v)-(\tilde{L}^{-1}\circ N)(u)= \tilde{L}^{-1}(f-\varepsilon v^{M})-\tilde{L}^{-1}(f-\varepsilon u^{M}) =\varepsilon  \tilde{L}^{-1}(u^{M}-v^{M})
\]
one sees that $\tilde{L}^{-1}\circ N$ is a contraction if, and only, if 
\begin{equation}\label{est-0}
\varepsilon \| \tilde{L}^{-1}(u^{M}-v^{M})|S\| \leq q\|u-v|S\|\quad \text{for some }q<1, 
\end{equation}
where $u,v\in S_0$ (meaning $u,v\in   B_R(\tilde{L}^{-1}f)$ in $S$). 
We analyse the {resulting  condition} with the help of the formula  $ u^M-v^M=(u-v)\sum_{j=0}^{M-1} u^jv^{M-1-j}$. This together with Theorem \ref{thm-weighted-sob-reg} gives 
\begin{align}
\|&\tilde{L}^{-1}(u^M-v^M)|S\|\notag\\
&\leq \|\tilde{L}^{-1}\| \|u^M-v^M|D\|\notag\\
&= \|\tilde{L}^{-1}\| \left\|u^M-v^M|
D_1\cap D_2 \right\|\notag\\
&= \|\tilde{L}^{-1}\| \left(\|u^M-v^M|D_1\|+\|u^M-v^M|D_2\|\right) \notag \\
&=  \|\tilde{L}^{-1}\| \left(\left\|(u-v)\sum_{j=0}^{M-1} u^jv^{M-1-j}|
D_1
\right\|+ \left\|(u-v)\sum_{j=0}^{M-1} u^jv^{M-1-j}|
D_2
\right\| \right)\notag\\
&=  \|\tilde{L}^{-1}\| \Bigg( \sum_{k=0}^{\gamma_m}\left\|\partial_{t^k}\left[(u-v)\sum_{j=0}^{M-1} u^jv^{M-1-j}\right]|L_2((0,T),\mathcal{K}^{2m(\gamma_m-k)}_{2,a+2m(\gamma_m-k)}(K))\right\| \notag \\
& \qquad \qquad \qquad  + \sum_{k=0}^{\gamma_m+1}
\left\|\partial_{t^{k}}\left[(u-v)\sum_{j=0}^{M-1} u^jv^{M-1-j}\right]|
L_2(K_T)\right\| \Bigg) 
\notag\\ \label{est-1}
\end{align}
Concerning the derivatives, we use Leibniz's formula twice and  we see that 
\begin{align}
\partial_{t^k}&(u^M-v^M)\notag\\ 
&=\partial_{t^k}\left[(u-v)\sum_{j=0}^{M-1} u^jv^{M-1-j}\right]\notag\\
&= \sum_{l=0}^k{k \choose l}\partial_{t^l}(u-v)\cdot \partial_{t^{k-l}}\left(\sum_{j=0}^{M-1}u^j v^{M-1-j}\right)\notag\\
&= \sum_{l=0}^k{k \choose l}\partial_{t^l}(u-v)\cdot 
\left[\left(\sum_{j=0}^{M-1} \sum_{r=0}^{k-l} {{k-l}\choose r} \partial_{t^r}u^j \cdot \partial_{t^{k-l-r}}v^{M-1-j}\right)\right]. \notag\\\label{est-2}
\end{align}
In order to estimate the terms $\partial_{t^r}u^j$ and $\partial_{t^{k-l-r}}v^{M-1-j}$ we apply Fa\`{a} di Bruno's formula 
\begin{equation}\label{FaaDiBruno}
\partial_{t^r}(f\circ g)=\sum\frac{r!}{k_1!\ldots k_r!}\left(\partial_{t^{k_1+\ldots + k_r}}f\circ g\right)\prod_{{i}=1}^{r}\left(\frac{\partial_{t^{{i}}}g}{{i}!}\right)^{k_{{i}}},
\end{equation}
where  the sum runs over all $r$-tuples of nonnegative integers $(k_1,\ldots, k_r)$ satisfying 
\begin{equation}\label{cond-kr}
1\cdot k_1+2\cdot k_2+\ldots +r\cdot k_r=r.
\end{equation}
{In particular, from \eqref{cond-kr}  we see that $k_{r}\leq 1$, where  $r=1,\ldots, k$. Therefore, the highest derivative $\partial_{t^r}u$ appears at most once.  }
We {apply the formula to} $g=u$ and $f(x)=x^j$ and
  make use of the embeddings  \eqref{kondratiev-emb} and the pointwise multiplier results from Corollary \ref{thm-pointwise-mult-2} {(i) for $k\leq \gamma_m-1$}.   
This yields 
\begin{align}
\Big\|&\partial_{t^r}u^j  \left. | \mathcal{K}^{2m(\gamma_m-k)}_{2,a+2m(\gamma_m-k)}(K)\right\| \notag\\
&\leq  c_{r,j}\left\|\sum_{k_1+\ldots +k_r\leq j, \atop 1\cdot k_1+2\cdot k_2+\ldots +r\cdot k_r=r} u^{j-(k_1+\ldots +k_r)}\prod_{{i}=1}^r \left|\partial_{t^{{i}}}u\right|^{k_{{i}}}| \mathcal{K}^{2m(\gamma_m-k)}_{2,a+2m(\gamma_m-k)}(K)\right\| \notag\\
&\lesssim \sum_{k_1+\ldots +k_r\leq j, \atop 1\cdot k_1+2\cdot k_2+\ldots +r\cdot k_r=r} \left\| u| \mathcal{K}^{2m(\gamma_m-k)}_{2,a+2m(\gamma_m-k)}(K)\right\|^{j-(k_1+\ldots +k_r)} \notag\\
& \qquad \qquad   
\prod_{{i}=1}^{r} \left\| \partial_{t^{{i}}}u| \mathcal{K}^{2m(\gamma_m-k)}_{2,a+2m(\gamma_m-k)}(K)\right\|^{k_{{i}}}.  \notag \\ \label{est-3}
\end{align}
{For $k=\gamma_m$ we use Corollary \ref{thm-pointwise-mult-2}(ii)  and obtain similar as above } 
{\begin{align}
\Big\|&\partial_{t^r}u^j  \left. | \mathcal{K}^{0}_{2,a}(K)\right\| \notag\\
&\leq  c_{r,j}\left\|\sum_{k_1+\ldots +k_r\leq j, \atop 1\cdot k_1+2\cdot k_2+\ldots +r\cdot k_r=r} u^{j-(k_1+\ldots +k_r)}\prod_{{i}=1}^r \left|\partial_{t^{{i}}}u\right|^{k_{{i}}}| \mathcal{K}^{0}_{2,a}(K)\right\| \notag\\
&\lesssim \sum_{k_1+\ldots +k_r\leq j, \atop 1\cdot k_1+2\cdot k_2+\ldots +r\cdot k_r=r} \left\| u| \mathcal{K}^{2}_{2,a+2}(K)\right\|^{j-(k_1+\ldots +k_r)} \notag\\
& \qquad \qquad   
\left\| \partial_{t^r}u| \mathcal{K}^{0}_{2,a}(K)\right\|^{k_r}\prod_{{i}=1}^{r-1} \left\| \partial_{t^{{i}}}u| \mathcal{K}^{2}_{2,a+2}(K)\right\|^{k_{{i}}}\notag\\
&\lesssim \sum_{k_1+\ldots +k_r\leq j, \atop 1\cdot k_1+2\cdot k_2+\ldots +r\cdot k_r=r} \left\| u| \mathcal{K}^{2m\gamma_m}_{2,a+2m\gamma_m}(K)\right\|^{j-(k_1+\ldots +k_r)} \notag\\
& \qquad \qquad   
\left\| \partial_{t^r}u| \mathcal{K}^{2m(\gamma_m-r)}_{2,a+2m(\gamma_m-r)}(K)\right\|^{k_r}\prod_{{i}=1}^{r-1} \left\| \partial_{t^{{i}}}u| \mathcal{K}^{2m(\gamma_m-{i})}_{2,a+2m(\gamma_m-{i})}(K)\right\|^{k_{{i}}}.  \notag \\ \label{est-33a}
\end{align}}
{Note that we require $\gamma_m\geq 1$ in the last step. }
{We proceed similarly}  for $\partial_{t^{k-l-r}}v^{M-1-j}$. 
Now \eqref{est-2} together with \eqref{est-3}  {and \eqref{est-33a}} inserted in \eqref{est-1} together with  Corollary \ref{thm-pointwise-mult-2} give \\

$\|\tilde{L}^{-1}\|\|u^M-v^M|D_1\|$
\begin{align}
&\lesssim  \|\tilde{L}^{-1}\|\sum_{k=0}^{\gamma_m}\left(\int_0^T\left\|\partial_{t^k}\left[(u-v)\sum_{j=0}^{M-1} u^jv^{M-1-j}\right]|\mathcal{K}^{2m(\gamma_m-k)}_{2,a+2m(\gamma_m-k)}(K)\right\|^2\ud t\right)^{1/2}\notag\\
& {
\lesssim \|\tilde{L}^{-1}\|\sum_{k=0}^{\gamma_m}\sum_{l=0}^k\sum_{j=0}^{M-1}\sum_{r=0}^{k-l}\Bigg(\int_0^T \left\|\partial_{t^l}(u-v)
|\mathcal{K}^{2m(\gamma_m-k)}_{2,a+2m(\gamma_m-k)}(K)\right\|^2 }\notag \\
& {
\qquad \qquad 
\left\|\partial_{t^r}u^j |\mathcal{K}^{2m(\gamma_m-k)}_{2,a+2m(\gamma_m-k)}(K)\right\|^2
\left\|\partial_{t^{k-l-r}}v^{M-1-j}|\mathcal{K}^{2m(\gamma_m-k)}_{2,a+2m(\gamma_m-k)}(K)\right\|^2
\ud t\Bigg)^{1/2}
}\label{k=gamma_m} \\
& {
\lesssim \|\tilde{L}^{-1}\|\sum_{k=0}^{\gamma_m}\sum_{l=0}^k\sum_{j=0}^{M-1}\sum_{r=0}^{k-l}\Bigg(\int_0^T \left\|\partial_{t^l}(u-v)
|\mathcal{K}^{2m(\gamma_m-k)}_{2,a+2m(\gamma_m-k)}(K)\right\|^2 }\notag \\
& {
\sum_{\kappa_1+\ldots+\kappa_r\leq j, \atop \kappa_1+2\kappa_2+\ldots+r\kappa_r=r}
\left\|u |\mathcal{K}^{2m(\gamma_m-k)}_{2,a+2m(\gamma_m-k)}(K)\right\|^{2(j-(\kappa_1+\ldots+\kappa_r))}
\prod_{i=0}^r \left\| \partial_{t^{{i}}}u| \mathcal{K}^{2m(\gamma_m-{i})}_{2,a+2m(\gamma_m-{i})}(K)\right\|^{2\kappa_{{i}}}
}\notag \\
& {
\sum_{\kappa_1+\ldots+\kappa_{k-l-r}\leq M-1-j, \atop \kappa_1+2\kappa_2+\ldots+(k-l-r)\kappa_{k-l-r}=k-l-r}
\left\|v |\mathcal{K}^{2m(\gamma_m-k)}_{2,a+2m(\gamma_m-k)}(K)\right\|^{2(M-1-j-(\kappa_1+\ldots+\kappa_{k-l-r}))} }\notag \\
&{\qquad \qquad 
\prod_{i=0}^{k-l-r} \left\| \partial_{t^{{i}}}v| \mathcal{K}^{2m(\gamma_m-{i})}_{2,a+2m(\gamma_m-{i})}(K)\right\|^{2\kappa_{{i}}}
\ud t\Bigg)^{1/2}
}\notag \\
& {
\lesssim \|\tilde{L}^{-1}\|\sum_{k=0}^{\gamma_m}M\Bigg(\int_0^T \left\|\partial_{t^k}(u-v)
|\mathcal{K}^{2m(\gamma_m-k)}_{2,a+2m(\gamma_m-k)}(K)\right\|^2 }\notag \\
& {
\qquad 
\sum_{\kappa_1'+\ldots+\kappa_k'\leq \min\{M-1,k\}, \atop \kappa_k'\leq 1}
\max_{w\in \{u,v\}}\left\|w |\mathcal{K}^{2m(\gamma_m-k)}_{2,a+2m(\gamma_m-k)}(K)\right\|^{2(M-1-(\kappa_1'+\ldots+\kappa_k'))}}\notag \\
& {\qquad 
\prod_{i=0}^k  \max\left\{\left\| \partial_{t^{{i}}}u| \mathcal{K}^{2m(\gamma_m-{i})}_{2,a+2m(\gamma_m-{i})}(K)\right\|, \left\| \partial_{t^{{i}}}v| \mathcal{K}^{2m(\gamma_m-{i})}_{2,a+2m(\gamma_m-{i})}(K)\right\|, 1\right\}^{4\kappa_i'} \ud t\Bigg)^{1/2}
}\label{kappa} \\
&\lesssim M \|\tilde{L}^{-1}\| \cdot 
\left\|u-v| \bigcap_{k=0}^{\gamma_m+1}W_2^{k}((0,T),\mathcal{K}^{2m(\gamma_m-(k-1))}_{2,a+2m(\gamma_m-(k-1))}(K))\right\|\cdot \notag\\
& \qquad \max_{w\in \{u,v\}}\max_{l=0,\ldots, \gamma_m} \max \Big(\left\| \partial_{t^l}w |L_{\infty}((0,T),\mathcal{K}^{2m(\gamma_m-l)}_{2,a+2m(\gamma_m-l)}(K))\right\|,\  
1\Big)^{{2(M-1)}}.\notag\\ \label{est-4}
\end{align}
{
We give some explanations concerning the estimate above. In \eqref{k=gamma_m} the term with $k=\gamma_m$ requires some special care since we have to apply Corollary \ref{thm-pointwise-mult-2} (ii). In this case we calculate 
\begin{align*}
\Bigg\| & \left.\partial_{\gamma_m}\left[(u-v)\left(\sum_{j=0}^{M-1}u^jv^{M-1-j}\right)\right]|\calk^{0}_{2,a}(K)\right\|\notag \\ 
& \lesssim \left\|\partial_{\gamma_m}(u-v)|\calk^{0}_{2,a}(K)\right\|
\sum_{j=0}^{M-1}\left\|u^jv^{M-1-j}|\calk^{2}_{2,a+2}(K)\right\| \notag\\
& \qquad + \left\|u-v|\calk^{2}_{2,a+2}(K)\right\|
\sum_{j=0}^{M-1}\sum_{r=0}^{\gamma_m}\left\|(\partial_{t^r}u^j)(\partial_{t^{\gamma_m-r}}v^{M-1-j})|\calk^{0}_{2,a}(K)\right\|\\
& \qquad + \left\|\sum_{r=1}^{\gamma_m-1}{\gamma_m\choose r}\partial_r(u-v)\partial_{\gamma_m-r}\left(\sum_{j=0}^{M-1}\ldots\right) |\calk^{0}_{2,a}(K)\right\|.  
\end{align*}
The lower order derivatives in the last line  cause no problems since we can (again) apply  Corollary \ref{thm-pointwise-mult-2}(i) as before. 
The term $\left\|u^jv^{M-1-j}|\calk^{2}_{2,a+2}(K)\right\|$ can now be further estimated with the help of Corollary \ref{thm-pointwise-mult-2}(i). For the term $\sum_{r=0}^{\gamma_m}\left\|(\partial_{t^r}u^j)(\partial_{t^{\gamma_m-r}}v^{M-1-j})|\calk^{0}_{2,a}(K)\right\|$ we again use Corollary \ref{thm-pointwise-mult-2}(ii), then proceed as in \eqref{est-33a} and see that the resulting estimate yields  \eqref{k=gamma_m}.\\ 
Moreover, in \eqref{kappa} we used the fact that in the step before we have two sums with  $\kappa_1+\ldots +\kappa_r\leq j$ and $\kappa_1+\ldots+\kappa_{k-l-r}\leq M-1-j$, i.e., we have $k-l$ different $\kappa_i$'s which leads to at most $k$ different $\kappa_i$'s if $l=0$.  We allow all combinations of $\kappa_i$'s and  redefine the $\kappa_i$'s in the second sum leading to $\kappa_1', \ldots , \kappa_k'$ with $\kappa_1'+\ldots+\kappa_k'\leq M-1$ and replace the old conditions $\kappa_1+2\kappa_2+r\kappa_r\leq r$ and $\kappa_1+2\kappa_2+(k-l-r)\kappa_{k-l-r}\leq k-l-r$ by the weaker ones $\kappa_1'+\ldots+\kappa_k'\leq k$ and $\kappa_k'\leq 1$. This causes no problems since the other terms appearing in this step do not depend on $\kappa_i$ apart from the product term. There, the fact that some of the old $\kappa_i$'s from both sums might coincide is reflected in the new exponent $4\kappa_i'$.   
}
From Theorem \ref{thm-sob-emb} (Sobolev embedding) we {conclude} that 
\begin{eqnarray}
u,v \in S&\hookrightarrow & \bigcap_{k=0}^{\gamma_m+1}W^{{k}}_2((0,T),\mathcal{K}^{2m(\gamma_m-(k-1))}_{2,a+2m(\gamma_m-(k-1))}(K))
\notag\\
&\hookrightarrow & \bigcap_{k=1}^{\gamma_m+1}\mathcal{C}^{{k-1,\frac 12}}((0,T),\mathcal{K}^{2m(\gamma_m-(k-1))}_{2,a+2m(\gamma_m-(k-1))}(K))
\notag\\
&\hookrightarrow & \bigcap_{k=1}^{\gamma_m+1}{C}^{{k-1}}((0,T),\mathcal{K}^{2m(\gamma_m-(k-1))}_{2,a+2m(\gamma_m-(k-1))}(K))\\
&=& \bigcap_{l=0}^{\gamma_m}{C}^{{l}}((0,T),\mathcal{K}^{2m(\gamma_m-l)}_{2,a+2m(\gamma_m-l)}(K)), 
\label{est-5}
\end{eqnarray}
hence, the term  {involving the maxima, $\max_{w\in \{u,v\}}\max_{l=0,\ldots, \gamma_m}\max (\ldots)^{M-1}$} in \eqref{est-4} is bounded {by $\max(R+\|\tilde{L}^{-1}f|S\|,1)^{M-1}$}.  
Moreover, since $u$ and $v$ are taken from $B_R(\tilde{L}^{-1}f)$  in {$S=S_1\cap  S_2$}, we obtain from \eqref{est-4},  
 \begin{align}
\|\tilde{L}^{-1}\| & \|u^M-v^M|D_1\|\notag \\
&\leq  {c_0}\|\tilde{L}^{-1}\|M\max(R+\|\tilde{L}^{-1}f|S\|,1)^{{2(M-1)}}\|u-v| S\|\notag\\
&\leq  {c_2}\|\tilde{L}^{-1}\|M\max(R+\|\tilde{L}^{-1}\|\cdot \|f|D\|,1)^{{2(M-1)}} \|u-v| S\|\notag\\
&= {c_2}\|\tilde{L}^{-1}\|M\max(R+\|\tilde{L}^{-1}\| \eta,1 )^{{2(M-1)}} \|u-v| S\|,   \label{est-ball}
\end{align}
{where we put $\eta:=\|f|D\|$ in the last line, $c_0$ denotes the constant resulting from \eqref{est-3} and \eqref{est-4} and $c_2=c_0c_1$ with $c_1$ being the constant from the estimates in Theorem \ref{thm-weighted-sob-reg}}. 

We now turn our attention towards the second term $\|\tilde{L}^{-1}\| \|u^M-v^M|D_2\|$ in \eqref{est-1} and calculate \\
{
\begin{align}
\|\tilde{L}^{-1}\|& \|(u^M-v^M)|D_2\| \notag\\
&= \|\tilde{L}^{-1}\|\left\|(u-v)\sum_{j=0}^{M-1} u^jv^{M-1-j}|W^{\gamma_m+1}((0,T),L_2(K))\right\|\notag\\
&= \|\tilde{L}^{-1}\|\sum_{k=0}^{\gamma_m+1}\left\|\partial_{t^k}\left[(u-v)\sum_{j=0}^{M-1} u^jv^{M-1-j}\right]|L_2((0,T),L_2(K))\right\| \notag\\
&=  \|\tilde{L}^{-1}\|\sum_{k=0}^{\gamma_m+1}\left\|\sum_{l=0}^k{k \choose l}\partial_{t^l}(u-v)\cdot \right. \notag\\
& \qquad \left.\left[\left(\sum_{j=0}^{M-1} \sum_{r=0}^{k-l} {{k-l}\choose r} \partial_{t^{r}}u^j \cdot \partial_{t^{k-l-r}}v^{M-1-j}\right)\right]|L_2(K_T)\right\| \notag\\
&\lesssim  \|\tilde{L}^{-1}\|\sum_{k=0}^{\gamma_m+1}\left\|\sum_{l=0}^k|\partial_{t^l}(u-v)|\cdot \right.\notag\\ 
& \qquad \left.\left[\left(\sum_{j=0}^{M-1} \sum_{r=0}^{k-l}  |\partial_{t^{r}}u^j \cdot \partial_{t^{k-l-r}}v^{M-1-j}|\right)\right]|L_2(K_T)\right\|, \notag\\
\label{est-1a}
\end{align}
where we used Leibniz's formula twice as in \eqref{est-2} in the second but last line. Again  Fa\`{a} di Bruno's formula, cf. \eqref{FaaDiBruno}, is applied  in order to estimate the derivatives in \eqref{est-1a}. We use a special case of the multiplier result from \cite[Sect. 4.6.1, Thm.~1(i)]{RS96}, {which states that for parameters $s_1> s_2$, $s_1+s_2>d\max\left(0,\frac 2p-1\right)$, $s_2>\frac dp$, and $q\geq \max(q_1,q_2)$,  we have
\[
\|uv|F^{s_1}_{p,q_1}\|\lesssim \|u|F^{s_2}_{p,q_2}\|\cdot \|v|F^{s_1}_{p,q_1}\|,  
\]
where $F^s_{p,q}$ denote the Triebel-Lizorkin spaces closely linked with the Besov spaces by interchanging the order in which the $\ell_q-$ and $L_p-$Norms are taken, cf. \cite{RS96} and the references given there.  In particular, choosing $s_1=0$, $s_2=m\geq 2$, $d=3$, $q_1=q_2=p=2$ and using the {identity} $F^0_{2,2}=L_2$ and $F^m_{2,2}=W_2^m$, we obtain }
\begin{equation}\label{multiplier-lim}
\|uv|L_2\|\lesssim \|u|W^m_2\|\cdot \|v|L_2\|.  
\end{equation}
This is exactly the point where our assumption $m\geq 2$ comes into play, since $s_2=m>\frac dp=\frac 32$ is needed.  
With this we obtain 
\begin{align}
&\Big\|\partial_{t^r}u^j| L_2(K)\Big\| \notag\\
&\leq  c_{r,j}\left\|\sum_{k_1+\ldots +k_r\leq j} u^{j-(k_1+\ldots +k_r)}\prod_{{i}=1}^r \left|\partial_{t^{{i}}}u\right|^{k_{{i}}}| L_2(K)\right\| \notag\\
&\lesssim \sum_{k_1+\ldots +k_r\leq j} \left\| u| W^m_2(K)\right\|^{j-(k_1+\ldots +k_r)} \prod_{{i}=1}^{r-1} \left\| \partial_{t^{{i}}}u| W_2^m(K)\right\|^{k_{{i}}} \left\| \partial_{t^r}u| L_2(K)\right\|^{k_r}. \notag \\ \label{est-3a}
\end{align}
Similar  for $\partial_{t^{k-l-r}}v^{M-1-j}$. As before, from \eqref{cond-kr}  we observe {$k_{r}\leq 1$, therefore the highest derivative $u^{(r)}$} appears at most once.  {Note that since $W^m_2(K)$ is a multiplication algebra for $m\geq 2$, we get \eqref{est-3a} with $L_2(K)$ replaced by $W^m_2(K)$ as well.} 
Now {\eqref{multiplier-lim} and \eqref{est-3a}  inserted in \eqref{est-1a}} gives 
\begin{align}
\|& \tilde{L}^{-1}\| \|u^M-v^M|D_2\|\notag\\
& {
= \|\tilde{L}^{-1}\|\sum_{k=0}^{\gamma_m+1}\Bigg(\int_0^T\left\|\partial_{t^k}(u-v)\sum_{j=0}^{M-1}u^jv^{M-1-j}| L_2(K)\right\|^2 \ud t\Bigg)^{1/2}
}\notag\\
& {
\lesssim \|\tilde{L}^{-1}\|\sum_{k=0}^{\gamma_m+1}\sum_{l=0}^k\Bigg(\int_0^T\left\|\partial_{t^l}(u-v)|W^m_2(K)\right\|^2 }\notag\\
& \qquad\qquad  {\sum_{j=0}^{M-1}\sum_{r=0}^{k-l}\left\|\partial_{t^r}u^j \cdot \partial_{t^{k-l-r}}v^{M-1-j}| L_2(K)\right\|^2 \ud t\Bigg)^{1/2}
}\notag\\
& {
\lesssim \|\tilde{L}^{-1}\|\sum_{k=0}^{\gamma_m+1}\sum_{l=0}^k\Bigg(\int_0^T\bigg\{\left\|\partial_{t^l}(u-v)|W^m_2(K)\right\|^2 }\notag\\
& \qquad\qquad  {\sum_{j=0}^{M-1}\sum_{r=0, \atop (k-l-r\neq \gamma_m+1)\wedge (r\neq \gamma_m+1)}^{k-l}\left\|\partial_{t^r}u^j|W^m_2(K)\|^2 \|\partial_{t^{k-l-r}}v^{M-1-j}| W^m_2(K)\right\|^2 }\notag \\
& {
\qquad \qquad +\|u-v|W^m_2(K)\|^2\|\partial_{t^{\gamma_m+1}}u^j|L_2(K)\|^2\|v^{M-1-j}|W^m_2(K)\|^2}\notag \\
& \qquad \qquad {+ \|u-v|W^m_2(K)\|^2\|u^j|W^m_2(K)\|^2\|\partial_{t^{\gamma_m+1}}v^{M-1-j}|L_2(K)\|^2
\bigg\}\ \ud t\Bigg)^{1/2}
}\notag\\
%
%
%
&{\lesssim \|\tilde{L}^{-1}\|\sum_{k=0}^{\gamma_m+1}\sum_{l=0}^k\Bigg(\int_0^T\left\|\partial_{t^l}(u-v)| W^m_2(K)\right\|^2 \cdot }\notag \\
& {\qquad \qquad \sum_{j=0}^{M-1} \sum_{r=0}^{k-l}\sum_{\kappa_1+\ldots +\kappa_{r}\leq j, \atop \kappa_1+2\kappa_2+\ldots+ r\kappa_{r}\leq r}
  \left\| u| W^m_2(K)\right\|^{2(j-(\kappa_1+\ldots +\kappa_{r}))}} \notag \\
   & { \qquad 
\left.\begin{cases}   \left\| \partial_{t^{{r}}}u| L_2(K)\right\|^{2\kappa_{{r}}}\prod_{{i}=1}^{r-1} \left\| \partial_{t^{{i}}}u| W^m_2(K)\right\|^{2\kappa_{{i}}}, & r=\gamma_m+1,\\
\prod_{{i}=1}^{r} \left\| \partial_{t^{{i}}}u| W^m_2(K)\right\|^{2\kappa_{{i}}},    & r\neq \gamma_m+1
 \end{cases}  \right\}
}\notag \\
  & {\qquad  \sum_{\kappa_1+\ldots +\kappa_{k-l-r}\leq M-1-j, \atop \kappa_1+2\kappa_2+\ldots+(k-l-r)\kappa_{k-l-r}\leq k-l-r} 
  \left\| v| W^m_2(K)\right\|^{2(M-1-j-(\kappa_1+\ldots +\kappa_{k-l-r}))} }\notag \\
    & {\qquad 
\left.\begin{cases}   \left\| \partial_{t^{{r}}}v| L_2(K)\right\|^{2\kappa_{{r}}}\prod_{{i}=1}^{k-l-r-1} \left\| \partial_{t^{{i}}}v| W^m_2(K)\right\|^{2\kappa_{{i}}}, & k-l-r=\gamma_m+1,\\
\prod_{{i}=1}^{l-k-r} \left\| \partial_{t^{{i}}}v| W^m_2(K)\right\|^{2\kappa_{{i}}},    & k-l-r\neq \gamma_m+1
 \end{cases}  \right\}
\ud t\Bigg)^{1/2}}\notag\\
&{\lesssim \|\tilde{L}^{-1}\|\sum_{k=0}^{\gamma_m+1}\Bigg(\int_0^T\left\|\partial_{t^k}(u-v)| W^m_2(K)\right\|^2 \cdot }\notag \\
& {\qquad \qquad M \sum_{\kappa_1'+\ldots +\kappa_{k}'\leq \min\{M-1,k\}}
  \max_{w\in \{u,v\}}\left\| w| W^m_2(K)\right\|^{2(M-1-(\kappa_1'+\ldots +\kappa_{k}'))}} \notag \\
   & { \qquad 
\left.\begin{cases}   \max(\left\| \partial_{t^{{k}}}w| L_2(K)\right\|^{4\kappa_{k}'}\prod_{{i}=1}^{k-1} \left\| \partial_{t^{{i}}}w| W^m_2(K)\right\|^{4\kappa_{i}'}, 1), & k=\gamma_m+1,\\
\max(\prod_{{i}=1}^{k} \left\| \partial_{t^{{i}}}w| W^m_2(K)\right\|^{4\kappa_{i}'}, 1),    & k\neq \gamma_m+1
 \end{cases}  \right\}
\ud t\Bigg)^{1/2}}\notag\\
&\lesssim \|\tilde{L}^{-1}\|M
\|u-v| W^{\gamma_m+1}((0,T),W^m_2(K))\|^2\max_{w\in \{u,v\}}\max_{{i}=0,\ldots, \gamma_m} \max\notag\\
&  \qquad \left(
\left\| \partial_{t^{{i}}}w|L_{\infty}((0,T),W^m_2(K))\right\|,\  \left\| \partial_{t^{\gamma_m+1}}w|L_{\infty}((0,T),L_2(K))\right\|,\ 1\right)^{{{2(M-1)}}}\notag\\ \label{est-4a}
\end{align}
{Similar to \eqref{est-4} in the calculations above the term $k=\gamma_m+1$ required some special care. For the redefinition of the $\kappa_i$'s in the second but last line in \eqref{est-4a} we refer to the explanations given after \eqref{est-4}. }
From Theorem \ref{thm-sob-emb} we see that 
\begin{eqnarray}
u,v \in S&\hookrightarrow & W^{{\gamma_m+1}}_2((0,T),{\mathring{W}^m_2}(K))\cap W^{{\gamma_m+2}}_2((0,T),L_2(K))
\notag\\
&\hookrightarrow & \mathcal{C}^{{\gamma_m,\frac 12}}((0,T),{\mathring{W}^m_2}(K))\cap \mathcal{C}^{{\gamma_m+1,\frac 12}}((0,T),L_2(K))
\notag\\
&\hookrightarrow & {C}^{{\gamma_m}}((0,T),{\mathring{W}^m_2}(K))
\cap {C}^{{\gamma_m+1}}((0,T),L_2(K)), 
\label{est-4aa}
\end{eqnarray}
hence the term  ${\max_{w\in \{u,v\}}\max_{m=0,\ldots, l}\max(\ldots)^{M-1}}$ in \eqref{est-4a} is bounded.  Moreover, since $u$ and $v$ are taken from $B_R(\tilde{L}^{-1}f)$  in $S_2={\mathring{W}^{m,\gamma_m+2}_2(K_T)\hookrightarrow }W^{\gamma_m+1}_2((0,T),{\mathring{W}^m_2}(K))\cap  W^{\gamma_m+2}((0,T),L_2(K)) $, as in \eqref{est-ball} we obtain from \eqref{est-4a} {and \eqref{est-4aa}},  
 \begin{align}
\|\tilde{L}^{-1}\|\|u^M-v^M|D_2\|\leq c_3 
 \|\tilde{L}^{-1}\|M\max(R+\|\tilde{L}^{-1}\| \eta, 1)^{{2(M-1)}}\cdot \|u-v| S\|, \notag\\ \label{est-ball_a}
\end{align}
where we put $\eta:=\|f|D\|$ {and $c_3$ denotes the constant arising from our estimates \eqref{est-4a} and \eqref{est-4aa} above}. 
}
Now \eqref{est-1} together with \eqref{est-ball} and \eqref{est-ball_a} yields 
\begin{align}\label{est-ab}
\|\tilde{L}^{-1}(u^M-v^M)|S\|
& \leq \|\tilde{L}^{-1}\|\|(u^M-v^M)|D\|\notag \\
&\leq c \|\tilde{L}^{-1}\|M\max(R+\|\tilde{L}^{-1}\|\eta, 1)^{M-1}\|u-v|S\|, 
\end{align}
where $c=c_2+c_3$. 
For $\tilde{L}^{-1}\circ N$ to be a contraction, we therefore require 
\[
{c}\varepsilon \|\tilde{L}^{-1}\|M\max(R+\|\tilde{L}^{-1}\|\eta,1)^{{2(M-1)}}<1,
\]
{cf. \eqref{est-0}.} In case of $\ \max(R+\|\tilde{L}^{-1}\|\eta,1)=1$ this leads to 
\begin{equation}\label{cond-01}
\|\tilde{L}^{-1}\|<\frac{1}{{c}\varepsilon M}.
\end{equation}
On the other hand, if  $\ \max(R+\|\tilde{L}^{-1}\|\eta,1)=R+\|\tilde{L}^{-1}\|\eta$, we choose  $R=(r_0-1)\eta\|\tilde{L}^{-1}\|$, which gives {rise to} the condition 
\begin{equation}\label{cond-1}
{c}\varepsilon\|\tilde{L}^{-1}\|M(r_0\|\tilde{L}^{-1}\|\eta)^{{2(M-1)}}<1,\quad {\text{i.e.,}} \quad \eta^{{2(M-1)}} \|\tilde{L}^{-1}\|^{{2M-1}}<\frac{1}{{c}\varepsilon M}\left(\frac{1}{r_0}\right)^{{2(M-1)}}.
\end{equation}
{The next step is to show} that $(\tilde{L}^{-1}\circ N)(B_R(\tilde{L}^{-1}f))\subset B_R(\tilde{L}^{-1}f)$ in $S$. Since $(\tilde{L}^{-1}\circ N)(0)=\tilde{L}^{-1}(f-\varepsilon 0^M)=\tilde{L}^{-1}f$, we only need to apply the above estimate \eqref{est-ab} with $v=0$. This gives 
\begin{align*}
\varepsilon\|\tilde{L}^{-1}u^M|S\|
&\leq {c}\varepsilon \|\tilde{L}^{-1}\|M\max(R+\|\tilde{L}^{-1}\| \eta,1)^{{2(M-1)}}(R+\|\tilde{L}^{-1}\| \eta)\\
&\overset{!}{\leq}R=(r_0-1)\eta\|\tilde{L}^{-1}\|, 
\end{align*}
which, {in case that}  $\max(R+\|\tilde{L}^{-1}\|\eta,1)=1$,  leads to 
\begin{equation}\label{cond-02}
\|\tilde{L}^{-1}\|<\frac{r_0-1}{r_0}\left(\frac{1}{{c}\varepsilon M}\right), 
\end{equation}
whereas for $\max(R+\|\tilde{L}^{-1}\|\eta,1)=R+\|\tilde{L}^{-1}\|\eta$ we get 
\begin{equation}\label{cond-2}
{\eta^{2(M-1)} \|\tilde{L}^{-1}\|^{2M-1}\leq \frac{1}{{c}\varepsilon M}(r_0-1)\left(\frac{1}{r_0}\right)^{2M-1}. }
\end{equation}
We see that condition \eqref{cond-02} implies \eqref{cond-01}. Furthermore, since 
\[
{(r_0-1)\left(\frac{1}{r_0}\right)^{2M-1}=\frac{r_0-1}{r_0}\left(\frac{1}{r_0}\right)^{2(M-1)}<\left(\frac{1}{r_0}\right)^{2(M-1)}, }
\]
also condition \eqref{cond-2} implies \eqref{cond-1}. 
Thus, by applying Banach's fixed point theorem in a sufficiently small ball around the solution of the corresponding linear problem, we obtain {a unique} solution of problem \eqref{parab-nonlin-1}. 
\end{proof}

\remark{
The restriction $m\geq 2$ in Theorem \ref{nonlin-B-reg1} comes from the fact that we require $m>\frac dp=\frac 32$ in \eqref{multiplier-lim}. This assumption can probably be weakened, since we expect   the solution to satisfy  $u\in L_2((0,T), W^{s}_2(K))$ for all $s<\frac 32$, see also Remark \ref{gen-thm-parab-Besov} and the explanations given there.\\ 
Moreover, the restriction  {$a\geq -\frac 12$} in  Theorem \ref{nonlin-B-reg1} comes from Corollary \ref{thm-pointwise-mult-2} that we applied. 
Together with the restriction $a\in [-m,m]$ we are looking for  $a\in [-\frac 12,m]$ if the cone $K$ is smooth. For polyhedral cones with edges $M_k$, $k=1,\ldots, n$,  we   furthermore require $\delta_-^{(k)}<a+2m(\gamma_m-l)+m<\delta^{(k)}_+$ for $l=0,\ldots, \gamma_m$  from Theorem \ref{thm-weighted-sob-reg}. 
}

From Theorem \ref{nonlin-B-reg1}  we obtain the following result concerning  Besov regularity of our nonlinear parabolic problem \eqref{parab-nonlin-1}.

\begin{theorem}[Nonlinear Besov regularity]\label{nonlin-B-reg3}
Let $\tilde{L}$ and $N$ be as described above {and let} the assumptions of Theorem \ref{thm-weighted-sob-reg} be satisfied. 
Additionally, we assume {$\gamma_m\geq 1$}, $m\geq 2$, and {$a\geq -\frac 12$}. Furthermore,  let $D$ and $S$ be as in Theorem  \ref{nonlin-B-reg1} and suppose that $f\in D$.
Put $\eta:=\|f|D\|$ and $r_0>1$. Moreover, we choose $\varepsilon >0$ so small that 
\begin{equation}\label{nonlin-cond1}
{
\eta^{2(M-1)} \|\tilde{L}^{-1}\|^{2M-1}\leq \frac{1}{{c}\varepsilon M}(r_0-1)\left(\frac{1}{r_0}\right)^{2M-1}, \qquad \text{if}\quad  r_0\|\tilde{L}^{-1}\|\eta>1,
}
\end{equation}
and 
\begin{equation}\label{nonlin-cond2}
\|\tilde{L}^{-1}\|<\frac{r_0-1}{r_0}\left(\frac{1}{\varepsilon M}\right), \qquad \text{if}\quad  r_0\|\tilde{L}^{-1}\|\eta<1.
\end{equation}
Let {$\varphi$} denote the cut-off function as described in \eqref{cutoff}. 
\begin{minipage}{0.6\textwidth}
Then there exists a solution $u$ to \eqref{parab-nonlin-1}, whose truncated version  $\varphi u$ satisfies 
 ${\varphi} u\in B_0\subset B$, 
$$B:=L_2((0,T),B^{\alpha}_{\tau,\infty}(K)), 
$$ 
{\text{for all} $0<\alpha<\min(3m,\gamma)$}, $\frac 12<\frac{1}{\tau}<\frac{\alpha}{3}+\frac 12$,  and  $B_0$ denotes a small ball  around $\tilde{L}^{-1}f$ (the solution of the corresponding linear problem) with radius $R={C\tilde{C}}(r_0-1)\eta \|\tilde{L}^{-1}\|$. 
\end{minipage}\hfill \begin{minipage}{0.33\textwidth}
\includegraphics[width=4.5cm]{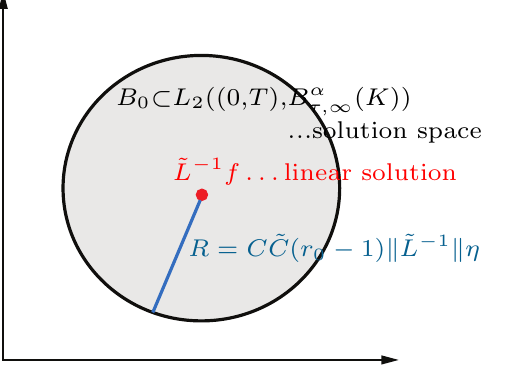}
\captionof{figure}{Nonlinear solution in $B_0$}
\end{minipage}
\end{theorem}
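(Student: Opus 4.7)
The plan is to combine the nonlinear Kondratiev result (Theorem \ref{nonlin-B-reg1}) with the embedding machinery (Theorem \ref{thm-hansen-gen}) after a localization to the truncated cone $K_0$, in complete analogy with the proof of Theorem \ref{thm-parab-Besov} for the linear problem. First, observe that the hypotheses on $f$, $\varepsilon$, $r_0$, together with $m\geq 2$, $\gamma_m\geq 1$ and $a\geq-\tfrac12$, are precisely those of Theorem \ref{nonlin-B-reg1}. Applying it yields a unique fixed point $u\in S_0\subset S=S_1\cap S_2$ of $\tilde L^{-1}\circ N$ with $\|u-\tilde L^{-1}f\,|\,S\|\leq R=(r_0-1)\eta\|\tilde L^{-1}\|$. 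In particular, $u$ has the two regularity properties
\begin{equation*}
u\in L_2\bigl((0,T),\mathcal{K}^{2m(\gamma_m+1)}_{2,a+2m(\gamma_m+1)}(K)\bigr)\cap L_2\bigl((0,T),\mathring W^m_2(K)\bigr),
\end{equation*}
with norm in each component controlled by a constant multiple of $\|u\,|\,S\|\leq\|\tilde L^{-1}f\,|\,S\|+R$.

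Next I would localize. Multiplying by the cut-off function $\varphi\in C_0^\infty(K_0)$ from \eqref{cutoff} and using the pointwise multiplier estimate \eqref{multiplier} for Kondratiev spaces together with the elementary fact that $\varphi$ is an admissible multiplier on $W^m_2(K)$, one obtains
\begin{equation*}
\|\varphi u\,|\,L_2((0,T),\mathcal{K}^{2m(\gamma_m+1)}_{2,a+2m(\gamma_m+1)}(K_0))\|+\|\varphi u\,|\,L_2((0,T),W^m_2(K_0))\|\leq \tilde C\,\|u\,|\,S\|,
\end{equation*}
for a constant $\tilde C$ depending only on $\varphi$. Since $W^m_2\hookrightarrow B^m_{2,\infty}$ we can trade the Sobolev norm for a Besov norm on $K_0$ at no cost.

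Now I would apply Theorem \ref{thm-hansen-gen} in the case $k=0$ (i.e.\ the embedding \eqref{hansen_gen_k0}) on the bounded polyhedral domain $K_0$, with $p=2$, $s=m$, $\gamma$ replaced by any integer $\leq 2m(\gamma_m+1)$ majorising the target smoothness, and $a'=a+2m(\gamma_m+1)$. The condition $\min(s,a')>\tfrac{\delta}{d}\alpha=\tfrac{\alpha}{3}$ is implied by $\alpha<3m$ (note $a'\geq m$ since $a\geq-m$), and since $\alpha$ must also not exceed the differentiation order supplied by the Kondratiev space, the admissible range is $0<\alpha<\min(3m,\gamma)$. With $\tau_0=2$, the parameter $\tau$ is free in $\frac12<\frac{1}{\tau}<\frac{\alpha}{3}+\frac12$. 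This yields
\begin{equation*}
\|\varphi u\,|\,L_2((0,T),B^{\alpha}_{\tau,\infty}(K_0))\|\leq C\,\tilde C\,\|u\,|\,S\|,
\end{equation*}
and by zero-extension outside $K_0$ the same estimate holds on $K$.

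Finally, to obtain the ball statement, I apply the previous chain of estimates to the difference $u-\tilde L^{-1}f$ in place of $u$: since $\tilde L^{-1}f\in S$ by Theorem \ref{thm-weighted-sob-reg} (so that $\varphi\tilde L^{-1}f\in B$), one gets
\begin{equation*}
\|\varphi u-\varphi\tilde L^{-1}f\,|\,B\|\leq C\tilde C\,\|u-\tilde L^{-1}f\,|\,S\|\leq C\tilde C\,(r_0-1)\eta\,\|\tilde L^{-1}\|,
\end{equation*}
i.e.\ $\varphi u\in B_0$ with the announced radius. The expected main obstacle is largely bookkeeping: one must check that the particular exponents coming out of Theorem \ref{nonlin-B-reg1} still match the restrictions $\min(s,a')>\tfrac{\alpha}{3}$ in Theorem \ref{thm-hansen-gen} for every admissible $\alpha<\min(3m,\gamma)$, and that the embedding constants on $K_0$ (not $K$) can be absorbed into a single multiplicative factor $C\tilde C$ independent of $u$; both are guaranteed by the multiplier assertion \eqref{multiplier} and by the fact that $\min(m,a+2m(\gamma_m+1))=m$ under our standing restriction $a\in[-m,m]$.
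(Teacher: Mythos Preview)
Your proposal is correct and follows essentially the same route as the paper: invoke Theorem \ref{nonlin-B-reg1} to get the fixed point $u\in S_0\subset S$, embed $S$ into $L_2((0,T),\mathcal{K}^{2m(\gamma_m+1)}_{2,a+2m(\gamma_m+1)}(K)\cap W^m_2(K))$, localize with $\varphi$ via the multiplier assertion \eqref{multiplier}, apply Theorem \ref{thm-hansen-gen} with $k=0$, and then run the chain on the difference $u-\tilde L^{-1}f$ to obtain the ball estimate with radius $C\tilde C(r_0-1)\eta\|\tilde L^{-1}\|$. Your parameter checks (in particular $\min(m,a+2m(\gamma_m+1))=m$ under $a\in[-m,m]$, yielding $\alpha<\min(3m,\gamma)$) coincide with the paper's, and your remark that the remaining work is bookkeeping is accurate.
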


\begin{proof}
This is a consequence of the regularity results in Kondratiev (and Sobolev) spaces from Theorem  \ref{nonlin-B-reg1}. To be more precise, Theorem  \ref{nonlin-B-reg1} establishes the existence of a fixed point $u$ in  
\begin{align*}
S_0\subset S
&{:= \bigcap_{k=0}^{\gamma_m+1}W_2^{k}((0,T),\mathcal{K}^{2m(\gamma_m-(k-1))}_{2,a+2m(\gamma_m-(k-1))}(K)) \cap \mathring{W}^{m,\gamma_m+2}_2(K_T)}\\
&\hookrightarrow  \bigcap_{k=0}^{\gamma_m+1}W_2^{k}((0,T),\mathcal{K}^{2m(\gamma_m-(k-1))}_{2,a+2m(\gamma_m-(k-1))}(K)) \\
&\qquad \qquad \cap W^{\gamma_m+1}_2((0,T),{{W}^m_2}(K))\cap W^{\gamma_{m}+2}_2((0,T), L_2(K)) \\
& \hookrightarrow L_2((0,T), \mathcal{K}^{2m(\gamma_m+1)}_{2,a+2m(\gamma_m+1)}(K)\cap W^m_2(K))=:S'. 
\end{align*}
This together with the embedding results for Besov spaces from Theorem  \ref{thm-hansen-gen} {(choosing $k=0$)} completes the proof, {in particular, we calculate for the solution (cf. the proof of Theorem \ref{thm-parab-Besov})
\begin{align}
\| \varphi u&-\varphi \tilde{L}^{-1}f |L_2((0,T),B^{\alpha}_{\tau,\infty}(K))\|\notag \\
&\leq  C \|\varphi u-\varphi \tilde{L}^{-1}f| L_2((0,T), \mathcal{K}^{2m(\gamma_m+1)}_{2,a+2m(\gamma_m+1)}(K)\cap W^m_2(K))\|\notag\\
&\leq  C\tilde{C}\|u-\tilde{L}^{-1}f| S'\|
\leq  C\tilde{C}\|u-\tilde{L}^{-1}f| S\|\notag\\
&\leq  C\tilde{C}(r_0-1)\eta \|\tilde{L}^{-1}\|, \label{est-abc}
\end{align}
where in the second step we used that $\varphi\in C_0^{\infty}(K)$ is a multiplier for Kondratiev spaces, cf. \eqref{multiplier}. Furthermore,  it can be seen from \eqref{est-abc} that new constants $C$ and $\tilde{C}$ appear when considering the radius $R$ around the linear solution where the problem can be solved compared to Theorem \ref{nonlin-B-reg1}.}
\end{proof}

\remark{A few words concerning the parameters appearing in Theorem \ref{nonlin-B-reg3} (and also Theorem  \ref{nonlin-B-reg1}) seem to be in order. 
Usually, the operator norm  $\|\tilde{L}^{-1}\|$ as well as  $\varepsilon$ are fixed; but we can change $\eta$ and $r_0$ according to our needs. From this we see that by choosing $\eta$ small enough the  {condition \eqref{nonlin-cond2} can always be satisfied.} Moreover, one can  see easily that the smaller the nonlinear perturbation $\varepsilon>0$ is, the larger we can choose the radius $R$ of the ball $B_0$ where the  solution {to the nonlinear problem} is unique. 
}

\subsection{Space-time adaptivity and  H\"older-Besov regularity}
\label{sect-spacetime}

So far we have not exploited the fact that Theorem \ref{thm-weighted-sob-reg}  not only provides regularity {properties}  of the  solution $u$ of \eqref{parab-1a} but also of  {its} partial derivatives $\partial_{t^k} u$. In this section we will use this fact together with  Theorem \ref{thm-sob-emb} ({generalized Sobolev's embedding theorem})  in order to obtain some  mixed H\"older-Besov regularity results on the whole space-time cylinder $K_T$. \\ 
For parabolic SPDEs, results in this direction have been obtained in \cite{CKLL13}. {However, for SPDEs, the time regularity is limited in nature. This is caused by the nonsmooth character of the driving processes. Typically, H\"older regularity $\mathcal{C}^{0,\beta}$ can be obtained, but not more. In contrast to this, it is well-known that deterministic parabolic PDEs are smoothing in time. Therefore, we expect}  that  in the deterministic case considered here, higher regularity results in time can be obtained {compared to} the probabilistic setting.   \\

\begin{theorem}[H\"older-Besov regularity]\label{Hoelder-Besov-reg}
{Let $\gamma\in \nat $ with  $\gamma\geq 4m+1$ and put $\gamma_m:=\lfloor \frac{\gamma-1}{2m}\rfloor$. Furthermore, let  $a\in \real$ with   ${a\in [-m,m]}$.  Assume that  the right hand side $f$ of \eqref{parab-1a} satisfies 
\begin{itemize}
\item[(i)] $\partial_{t^k} f\in L_2(K_T)\cap L_2((0,T),\mathcal{K}^{2m(\gamma_m-k)}_{2,a+2m(\gamma_m-k)}(K))$, \ $k=0,\ldots, \gamma_m$, \\ and 
$\partial_{t^{\gamma_m+1}} f\in L_2(K_T)$. 
\item[(ii)] $\partial_{t^k} f(x,0)=0$, \quad  $k=0,1,\ldots, {\gamma_m}.$
\end{itemize}}
{Furthermore, let  Assumption \ref{assumptions}  hold for weight parameters $b=a+2m(\gamma_m-i)$, where $i=0,\ldots, \gamma_m$, and  $b'=-m$.}
Let ${\varphi}$ denote the cutoff function from \eqref{cutoff}. Then for the  solution $u\in {\mathring{W}}_2^{m,\gamma_m+2}(K_T)$ of problem \eqref{parab-1a}, we have 
$$
{\varphi}u\in \mathcal{C}^{{\gamma_m-2},\frac 12}((0,T),B^{\eta}_{\tau,\infty}(K)) \quad \text{for all}\quad 0<\eta<3m, \quad \frac 12<\frac{1}{\tau}<\frac{\eta}{3}+\frac 12.
$$  
In particular, we have the a priori estimate 
\begin{align*}
\|{\varphi}u&|\mathcal{C}^{{\gamma_m-2},\frac 12}((0,T),B^{\eta}_{\tau,\infty}(K))\|\\
&\lesssim  {\sum_{k=0}^{\gamma_m}\|\partial_{t^k} f|L_2((0,T), \mathcal{K}^{2m(\gamma_m-k)}_{2,a+2m(\gamma_m-k)}(K))\|+\sum_{k=0}^{\gamma_m+1}\|\partial_{t^k} f|{L_2(K_T)}\|},
\end{align*}
where the constant is independent of $u$ and $f$. 
\end{theorem}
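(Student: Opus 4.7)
The plan is to combine Theorem \ref{thm-weighted-sob-reg} (weighted Sobolev regularity in $t$ of all time derivatives of $u$), Theorem \ref{thm-hansen-gen} (Kondratiev-to-Besov embedding applied pointwise in $t$), and Theorem \ref{thm-sob-emb} (vector-valued Sobolev embedding in time). The hypothesis $\gamma\geq 4m+1$ guarantees $\gamma_m\geq 2$, so that the target H\"older index $\gamma_m-2$ is a nonnegative integer.

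First I would invoke Theorem \ref{thm-weighted-sob-reg} to obtain, for $k=0,1,\dots,\gamma_m+1$,
\[
\partial_{t^{k}}u\in L_2\bigl((0,T),\calk^{2m(\gamma_m+1-k)}_{2,a+2m(\gamma_m+1-k)}(K)\bigr),
\]
together with the a priori estimate \eqref{weighted-sobolev-est}. In parallel, from $u\in\mathring{W}_2^{m,\gamma_m+2}(K_T)=W_2^{\gamma_m+1}((0,T),W_2^m(K))\cap W_2^{\gamma_m+2}((0,T),L_2(K))$ it follows that $\partial_{t^{k}}u\in L_2((0,T),W_2^m(K))\hookrightarrow L_2((0,T),B^m_{2,\infty}(K))$ for $k=0,\dots,\gamma_m+1$. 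Multiplying by the cut-off $\varphi$ from \eqref{cutoff} preserves all of these regularities thanks to the Kondratiev multiplier bound \eqref{multiplier} and the obvious multiplier action on $W_2^m(K)$.

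Next I would apply Theorem \ref{thm-hansen-gen} (with $k=0$, i.e.\ \eqref{hansen_gen_k0}) to each of the time derivatives $\partial_{t^{k}}(\varphi u)$. For $k\in\{0,1,\dots,\gamma_m-1\}$, the corresponding Kondratiev smoothness $2m(\gamma_m+1-k)$ is at least $4m$ and the weight is $a+2m(\gamma_m+1-k)\geq a+4m\geq 3m\geq m$, since $a\geq -m$. Using the embedding $\calk^{2m(\gamma_m+1-k)}_{2,a+2m(\gamma_m+1-k)}(K)\hookrightarrow \calk^{\eta}_{2,a+2m(\gamma_m+1-k)}(K)$ from \eqref{kondratiev-emb} and then Theorem \ref{thm-hansen-gen}, the conditions $\min(m,a+2m(\gamma_m+1-k))>\eta/3$ reduce to $\eta<3m$, while the index range for $\tau$ becomes $\tfrac12<\tfrac1\tau<\tfrac{\eta}{3}+\tfrac12$. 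Hence
\[
\partial_{t^{k}}(\varphi u)\in L_2\bigl((0,T),B^{\eta}_{\tau,\infty}(K)\bigr),\qquad k=0,1,\dots,\gamma_m-1,
\]
for every admissible pair $(\eta,\tau)$, i.e.\ $\varphi u\in W_2^{\gamma_m-1}((0,T),B^{\eta}_{\tau,\infty}(K))$.

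Finally, I would invoke the vector-valued Sobolev embedding theorem (Theorem \ref{thm-sob-emb}, extended to quasi-Banach targets as remarked after its statement) with $p=2$ and $m=\gamma_m-1$ to conclude
\[
\varphi u\in \mathcal{C}^{\gamma_m-2,\frac12}\bigl((0,T),B^{\eta}_{\tau,\infty}(K)\bigr).
\]
The a priori estimate is obtained by chaining the bounds from Theorem \ref{thm-sob-emb}, Theorem \ref{thm-hansen-gen}, the multiplier estimate \eqref{multiplier}, and finally the Kondratiev a priori estimate \eqref{weighted-sobolev-est}. The main (minor) technical obstacle I anticipate is purely bookkeeping: verifying that, across all $k\in\{0,\dots,\gamma_m-1\}$, the resulting condition from Hansen's embedding yields the single uniform constraint $\eta<3m$; this is what forces the assumption $\gamma\geq 4m+1$ (equivalently $\gamma_m-1\geq 1$, so that even at $k=\gamma_m-1$ the Kondratiev smoothness $4m$ accommodates any $\eta<3m$) and the lower bound $a\geq -m$ which keeps $\min(m,a+2m(\gamma_m+1-k))=m$ throughout.
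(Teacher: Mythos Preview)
Your overall strategy---combine Theorem \ref{thm-weighted-sob-reg}, the Hansen-type embedding (Theorem \ref{thm-hansen-gen}), and the vector-valued Sobolev embedding (Theorem \ref{thm-sob-emb})---is precisely the paper's strategy, and your index bookkeeping is correct. The only substantive difference is the \emph{order} in which you apply the last two ingredients, and this order matters for a technical reason.

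You apply Hansen's embedding first, obtaining $\partial_{t^k}(\varphi u)\in L_2((0,T),B^\eta_{\tau,\infty}(K))$, and then invoke Theorem \ref{thm-sob-emb} with target $X=B^\eta_{\tau,\infty}(K)$. But for the relevant range $\tau<1$ this space is only \emph{quasi}-Banach, whereas Theorem \ref{thm-sob-emb} (and the underlying definitions of $L_p(I,X)$, $W^m_p(I,X)$, weak derivatives via Bochner integrals) are stated in the paper for Banach targets $X$. The remark you allude to concerns only the \emph{definition} of the spaces $C^k(I,Y)$ and $\mathcal{C}^{k,\alpha}(I,Y)$ for quasi-Banach $Y$, not the Sobolev embedding into them. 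So as written, your final step is not justified by the tools available in the paper.

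The paper avoids this by swapping the order: it first stays in the Banach world, showing
\[
\varphi u\in W^{k+1}_2\bigl((0,T),\ \mathcal{K}^{2m(\gamma_m-k)}_{2,a+2m(\gamma_m-k)}(K)\cap W^m_2(K)\bigr)
\]
for $k\leq \gamma_m-2$, then applies Theorem \ref{thm-sob-emb} with the \emph{Banach} target $\mathcal{K}^{2m(\gamma_m-k)}_{2,a+2m(\gamma_m-k)}(K)\cap W^m_2(K)$ to land in $\mathcal{C}^{k,\frac12}$, and only \emph{then} uses Hansen's pointwise embedding together with the elementary observation (Remark \ref{quasi-B-deriv}) that $X\hookrightarrow Y$ implies $\mathcal{C}^{k,\alpha}(I,X)\hookrightarrow \mathcal{C}^{k,\alpha}(I,Y)$, which is valid even when $Y$ is only quasi-Banach. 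Reordering your argument this way closes the gap with no further work.
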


\begin{proof} Theorem \ref{thm-weighted-sob-reg} and  Proposition \ref{GenSol_Sobolev} show together with Theorems  \ref{thm-hansen-gen}  and  \ref{thm-sob-emb}, that under the given assumptions on the initial data $f$, we have {for $k\leq  \gamma_m-2$, }
\begin{eqnarray*}
{\varphi}u &\in & {W^{k+1}_2((0,T), {\mathcal{K}^{2m(\gamma_m-k)}_{2,a+2m(\gamma_m-k)}(K)})}\cap W^{{\gamma_m}+1}_2((0,T), W^m_2(K))\\
&\hookrightarrow & {W^{k+1}_2((0,T), {\mathcal{K}^{2m(\gamma_m-k)}_{2,a+2m(\gamma_m-k)}(K)}\cap W^m_2(K))}\\
&\hookrightarrow &{\mathcal{C}^{{k},\frac 12}((0,T), {\mathcal{K}^{2m(\gamma_m-k)}_{2,a+2m(\gamma_m-k)}(K)}\cap W^m_2(K)) }\\
&\hookrightarrow &{\mathcal{C}^{{k},\frac 12}((0,T), {\mathcal{K}^{\eta}_{2,a+2m(\gamma_m-k)}(K)}\cap W^m_2(K)) }\\
&\hookrightarrow & \mathcal{C}^{{k},\frac 12}((0,T), B^{\eta}_{\tau,\infty}(K)), 
\end{eqnarray*}
{where in the third step we require {$\eta\leq 2m(\gamma_m-k)$} and by Theorem \ref{thm-hansen-gen} we get the additional restriction  
\[
m=\min(m,a+2m(\gamma_m-k))\geq \frac{\eta}{3}, \quad \text{i.e.}, \quad \eta <3m. 
\]
Therefore, the upper bound on $\eta$ reads as $\eta<\min(3m, 2m(\gamma_m-k))=3m$ since $k\leq \gamma_m-2$.}   
\end{proof}

{
\rem{\hfill 
\begin{itemize}
\item[(i)] For {$\gamma\geq 2m+1$ and} $k=\gamma_m-1$ we have $\eta\leq 2m$ in the theorem above.  For {$\gamma\geq 2m$ and} $k=\gamma_m$ we get $\eta=0$. 
\item[(ii)] From the proof of Theorem \ref{Hoelder-Besov-reg} above it can be seen that the solution satisfies 
$$u\in {\mathcal{C}^{{k},\frac 12}((0,T), {\mathcal{K}^{2m(\gamma_m-k)}_{2,a+2m(\gamma_m-k)}(K)}), }$$
implying that for high regularity in time, which is displayed by the parameter $k$, we have less spacial regularity in terms of $2m(\gamma_m-k)$. 
\end{itemize}
}
}

\section{Parabolic Besov regularity on general Lipschitz domains}
\label{Sect-4a}

We turn our attention towards Besov regularity results for parabolic PDEs on general Lipschitz domains using regularity results in weighted Sobolev spaces from  \cite{Kim11}. 
{
It is important to note that for general Lipschitz domains the definition of the Kondratiev spaces is different when compared to polyhedral domains, since in this case the whole boundary $\partial \mathcal{O}$ coincides with the singular set. Therefore, the singularities induced by the boundary have a much stronger influence. As a consequence, the regularity results for polyhedral cones are much stronger compared to the Lipschitz case, see Remark \ref{rem-B6} below. \\
Surprisingly, it turns out that the spatial regularity results in the deterministic case are more or less the same as for the case of  SPDEs that was  already studied in \cite{Cio-Diss} based on \cite{Kim04, Kim12}. \\
However, for the time regularity we nevertheless expect a significant difference, cf. Subsection \ref{sect-spacetime}. Moreover, the reader should observe that our results stated in Section \ref{Sect-4} are more general in the sense that there differential operators of arbitrary order are considered {but with smooth $C^{\infty}$ coefficients}, whereas the analysis in this section is restricted to second order operators. 
}

Let $\mathcal{O}\subset \rd$ be a bounded Lipschitz domain and put $\varrho(x)=\mathrm{dist}(x,\partial \mathcal{O})$ . We  consider the following class of parabolic equations 
\begin{equation}\label{parab-Lipschitz}
\left\{\begin{array}{lcll}
\frac{\partial}{\partial t}u&=& \sum_{i,j=1}^d a_{ij}{\frac{\partial^2}{\partial  {x_i}\partial {x_j}}} u+\sum_{i=1}^d b_i \frac{\partial}{\partial {x_i}} u+cu+f & \text{on }\mathcal{O}_T, \\[0.1cm]
u(0,\cdot )&=& u_0& \text{on }\mathcal{O},
\end{array} \right\} 
\end{equation}
where the coefficients are assumed to satisfy the assumptions listed below. We need some more notation. Put $\varrho(x,y)={\min(\varrho(x),\varrho(y))}$. For $\alpha\in \real$, $\delta\in (0,1]$, and $m\in \nat_0$, we set 
\begin{eqnarray*}
[f]_m^{(\alpha)}&:=& \sup_{x\in \mathcal{O}}\varrho^{m+{\alpha}}(x)|D^mf(x)|,\\
{[}f{]}_{m+\delta}^{(\alpha)}
&:=& \sup_{x,y\in \mathcal{O}\atop |\beta|=m}\varrho^{m+\alpha}(x,y)\frac{|D^{\beta}f(x)-D^{\beta}f(y)|}{|x-y|^{\delta}},\\
|f|_{m}^{(\alpha)}&:=& \sum_{l=0}^m [f]_l^{(\alpha)}\qquad \text{and}\qquad |f|^{(\alpha)}_{m+\delta}:=|f|^{(\alpha)}_m+[f]^{(\alpha)}_{m+\delta},
\end{eqnarray*}
whenever it makes sense. Furthermore, fix a constant $\varepsilon_{0}>0$. Then for $\gamma\geq 0$ we define 
\[
\gamma_+=
\begin{cases}
\gamma, & \text{if }\gamma\in \nat_0, \\
\gamma+\varepsilon_0, & \text{otherwise}. 
\end{cases}
\]

\begin{assumption}[Assumptions on the coefficients] \label{ass-coeff}\hfill 
\begin{itemize}
\item[(i)] Parabolicity: There are constants $\delta_0,K\in (0,\infty)$ such that for all $\lambda \in \rd$ it holds 
$$\delta_0|\lambda|^2\leq a_{ij}(t,x)\lambda_i\lambda_j\leq K|\lambda|^2.$$
\item[(ii)] The behaviour of the coefficients $b_i$ and $c$ can be controlled near the boundary of $\mathcal{O}$: 
$$\lim_{\varrho(x)\rightarrow 0, \atop x\in \mathcal{O}}\sup_{t}\left(\varrho(x)|b_i(t,x)|+\varrho^2(x)|c(t,x)|\right)=0.$$
\item[(iii)] The coefficients $a_{ij}(t,\cdot)$ are uniformly continuous in $x$, i.e., for any $\varepsilon>0$ there is a $\delta=\delta(\varepsilon)>0$ such that 
\[
\left|a_{ij}(t,{x})-a_{ij}(t,y)\right|<\varepsilon
\]
for all $x,y\in \mathcal{O}$ with $|x-y|<\delta$. 
\item[(iv)]  For any $t>0$,  
$$\ds \left|a_{ij}(t,\cdot)\right|^{(0)}_{{\gamma_+}}
+\left|\varrho(x)b_i(t,\cdot)\right|^{(0)}_{{\gamma_+}}
+\left|\varrho^2(x)c(t,\cdot)\right|^{(0)}_{{\gamma_+}}
\leq K. 
$$
\end{itemize}
\end{assumption}

We define  Kondratiev spaces $\mathcal{K}^m_{p,a}(\mathcal{O})$ on bounded Lipschitz domains {similar to}  \eqref{Kondratiev-1}, i.e., 
\begin{equation}\label{Kondratiev-1a}
\|u|\V^m_{p,a}(\mathcal{O})\|:=\left(\sum_{|\alpha|\leq m}\int_{\mathcal{O}} |\varrho(x)|^{p(|\alpha |-a)}|D^{\alpha}_x u(x)|^p\ud x\right)^{1/p}<\infty,
\end{equation}
where $a\in \real$, $1<p<\infty$, $m\in \nat_0$, $\alpha\in \nat^n_0$, and the weight function $\varrho: \mathcal{O}\rightarrow [0,1]$ now stands for  the smooth distance to the singular set of $\mathcal{O}$, {i.e.,   $\varrho(x)=\mathrm{dist}(x,\partial \mathcal{O})$}. 
We generalize the above Kondratiev spaces with the help of complex interpolation, cf. \cite{Lot00}, to non-integer values $m\geq 0$ as follows. If $0<\eta<1$,  $m_0, m_1\in \nat_0$, $p_0, p_1\in (1,\infty)$, and $a_0, a_1\in \real$,  put 
\[
\mathcal{K}^{m}_{p,a}(\mathcal{O}):=\left[\mathcal{K}^{m_0}_{p_0,a_0}(\mathcal{O}),\mathcal{K}^{m_1}_{p_1,a_1}(\mathcal{O})\right]_{\eta}, \qquad \text{where}\quad m=(1-\eta)m_0+\eta m_1, 
\]
$\frac 1p=\frac{1-\eta}{p_0}+\frac{\eta}{p_1}$, and $a=(1-\eta)a_0+\eta a_1$. \\
 
 \medskip

The following theorem was proven in  \cite[Th. 2.5]{Kim11}. For convenience of the reader we adapt the notation from the paper to our needs. 

\begin{theorem}[{Kondratiev regularity}]\label{thm-main}
Let $p\in [2,\infty)$, $\gamma\in [0,\infty)$, and Assumption \ref{ass-coeff} be satisfied. Then there exists $\beta_0=\beta_0(p,d,\mathcal{O})>0$ such that for 
\[
a\in \left(\frac{2-p-\beta_0}{p},\frac{2-p+\beta_0}{p}\right), 
\]
any $f\in L_p([0,T],\mathcal{K}^{\gamma}_{p,a-1}(\mathcal{O}))$, and initial data $u_0\in\mathcal{K}^{\gamma+2-\frac{2}{p}}_{p,a+\frac{p-2}{p}}(\mathcal{O})$, equation \eqref{parab-Lipschitz} admits a unique solution $u\in L_p([0,T],\mathcal{K}^{\gamma+2}_{p,{a+1}}(\mathcal{O}))$ with $\partial_t u\in L_p([0,T],\mathcal{K}^{\gamma}_{p,a-1}(\mathcal{O}))$. 
In particular, we have 
\begin{align*}
\sum_{k=0}^1\Big\|\partial_{t^k}u|& L_p([0,T],\mathcal{K}^{\gamma+2-2k}_{p,{a+1-2k}}(\mathcal{O}))\Big\|\\
&\leq C\left(\| f|{L_p([0,T],\mathcal{K}^{\gamma}_{p,a-1}(\mathcal{O}))}\|+\big\|u_0|{\mathcal{K}^{\gamma+2-\frac{2}{p}}_{p,a+\frac{p-2}{p}}(\mathcal{O})}\big\|\right),
\end{align*}
where $C=C(d,p,\gamma,\theta,\delta_0,K,T,\mathcal{O})$. 
\end{theorem}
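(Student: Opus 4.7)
My plan is to follow the scheme of Kim \cite{Kim11} and reduce the problem, by several successive reductions, to a model problem that can be solved by harmonic-analytic methods on the half-space.

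\textbf{Step 1 (reduction to zero initial data).} First I would subtract an auxiliary function carrying the initial value. The hypothesis $u_0\in \calk^{\gamma+2-2/p}_{p,a+(p-2)/p}(\mathcal{O})$ is precisely the trace space (at $t=0$) of $\{v:v\in L_p([0,T],\calk^{\gamma+2}_{p,a+1}(\mathcal{O})),\ \partial_t v\in L_p([0,T],\calk^{\gamma}_{p,a-1}(\mathcal{O}))\}$; this trace characterization for Kondratiev spaces is obtained via complex interpolation $[\calk^{\gamma+2}_{p,a+1},\calk^{\gamma}_{p,a-1}]_{1/p}$. Choosing $v$ with the right trace, the function $\tilde u = u-v$ solves an equation of the same form with zero initial data and a modified right-hand side still in $L_p([0,T],\calk^{\gamma}_{p,a-1}(\mathcal{O}))$. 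Thus we may assume $u_0=0$.

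\textbf{Step 2 (reduction to integer $\gamma$ and to the principal part).} For the lower order terms $b_i,c$, Assumption \ref{ass-coeff}(ii) allows us to absorb them by a fixed-point/continuity argument: the weight $\varrho(x)|b_i|+\varrho(x)^2|c|$ being small near $\partial\mathcal{O}$ gives the estimate $\|\varrho b_i\partial_{x_i}u+cu\mid \calk^{\gamma}_{p,a-1}\|\le \epsilon\|u\mid \calk^{\gamma+2}_{p,a+1}\|+C_\epsilon\|u\mid\calk^{\gamma+1}_{p,a}\|$, and the lower-order contribution is handled by Gronwall. For non-integer $\gamma$, the result is obtained by complex interpolation between the integer cases, using the definition of $\calk^m_{p,a}$ for non-integer $m$. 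Thus it suffices to treat $\gamma\in\nat_0$ and $L=\sum a_{ij}\partial_{ij}$.

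\textbf{Step 3 (freezing coefficients and the model problem).} Using uniform continuity of $a_{ij}$ in $x$ (Assumption \ref{ass-coeff}(iii)), I would partition $\mathcal{O}$ into small patches where $a_{ij}(t,x)$ is close to constant $a_{ij}(t,x_0)$. By a linear change of variables this reduces locally to the heat equation $\partial_t u=\Delta u+f$ on a Lipschitz domain. The key a priori estimate to establish is then
\begin{equation*}
\|u\mid L_p([0,T],\calk^{\gamma+2}_{p,a+1}(\mathcal{O}))\|+\|\partial_t u\mid L_p([0,T],\calk^{\gamma}_{p,a-1}(\mathcal{O}))\|\lesssim \|f\mid L_p([0,T],\calk^{\gamma}_{p,a-1}(\mathcal{O}))\|.
\end{equation*}
Once this is proved, the method of continuity connecting $\Delta$ to $L$ (via $L_s=(1-s)\Delta+sL$) plus compactness/patching glue the local estimates into a global one, from which existence and uniqueness of the solution in the stated class follow by standard functional-analytic arguments.

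\textbf{Step 4 (the model estimate — the main obstacle).} The hard part is the weighted $L_p$ estimate for the heat equation on a Lipschitz domain. Here one exploits the fact that $\varrho(x)^{p(|\alpha|-a)}$ is an $A_p$-weight precisely when $a$ lies in a range of the form $((2-p-\beta_0)/p,(2-p+\beta_0)/p)$, with $\beta_0=\beta_0(p,d,\mathcal{O})>0$ determined by the $L_p$-solvability of the Dirichlet problem for the Laplacian on $\mathcal{O}$ (Jerison--Kenig \cite{JK95}). Equivalently, this is the range in which the Green operator and its derivatives are bounded on the appropriate weighted $L_p$-space. Parabolic Calder\'on--Zygmund theory with $A_p$-weights, combined with the identification of $\calk^{\gamma+2}_{p,a+1}$ with a weighted Sobolev space, then yields the desired estimate. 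This step is where the restriction on the weight parameter $a$ enters, and it is the technical core of the proof; details are in \cite{Kim04, Kim11}. Combining Steps 1--4 produces the claimed solution with the asserted a priori bound.
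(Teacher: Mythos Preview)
The paper does not prove this theorem at all; it is stated as a quotation of \cite[Th.~2.5]{Kim11}, with only a notational translation from Kim's weighted Sobolev scale to the Kondratiev scale used here. Your outline is therefore not to be compared against anything in the paper itself, but rather against Kim's original argument.

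That said, as a sketch of Kim's proof your Steps 1--3 are broadly in the right spirit (trace reduction, perturbation of lower-order terms, freezing and localization), but Step 4 misidentifies the mechanism. The parameter range for $a$ in Kim's framework does not arise from an $A_p$-weight condition on $\varrho^{p(|\alpha|-a)}$ combined with parabolic Calder\'on--Zygmund theory; rather, Kim works in the scale $H^{\gamma}_{p,\theta}$ of Krylov--Lototsky and obtains the admissible range of $\theta$ (equivalently, of $a$) from a Hardy-type inequality on Lipschitz domains together with the solvability theory for the heat equation on the half-space after flattening the boundary. The constant $\beta_0$ comes from the Hardy inequality constant for the Lipschitz domain, not from the Jerison--Kenig $L_p$-Dirichlet range for the Laplacian. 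If you want to flesh this into an actual proof, you should replace the $A_p$/Calder\'on--Zygmund heuristic by the Krylov half-space estimates plus the Lipschitz Hardy inequality as in \cite{Kim11}.
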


\remark{In \cite{KiKr04} similar results for $C^1$ domains were established. In particular, the condition on $a$ above in this case has to be replaced by
\[a\in \left(\frac 1p-1, \frac 1p \right).\]
}

We rewrite \cite[Th. 5.1]{Cio-Diss} and obtain the following embedding of weighted Sobolev spaces into the scale of Besov spaces. 

\begin{theorem} \label{emb-besov-han}
Let $\mathcal{O}\subset \real^d$ be a bounded Lipschitz domain. Fix $\gamma\in (0,\infty)$, $p\in [2,\infty)$, and $a\in \real$. Then 
\[
L_p([0,T],\mathcal{K}^{\gamma}_{p,a}(\mathcal{O}))\hookrightarrow L_p([0,T], B^{\alpha}_{\tau,\tau}(\mathcal{O})),
\]
for all $\alpha$ and $\tau$ with 
\[
\frac{1}{\tau}=\frac{\alpha}{d}+\frac 1p\quad \text{and} \quad 0<\alpha<\min\left\{\gamma, a\frac{d}{d-1}\right\}.
\]
\end{theorem}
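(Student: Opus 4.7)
The plan is to reduce the statement to a pointwise-in-time spatial embedding
\[
\mathcal{K}^{\gamma}_{p,a}(\mathcal{O}) \hookrightarrow B^{\alpha}_{\tau,\tau}(\mathcal{O})
\]
with a constant independent of $t$, which, upon integration over $[0,T]$, yields the asserted $L_p$-in-time inclusion. Fixing $t$ and suppressing it, I would first treat integer $\gamma$ and recover non-integer $\gamma$ at the very end by complex interpolation, using the definition of $\mathcal{K}^{\gamma}_{p,a}$ for non-integer $\gamma$ given before Theorem \ref{thm-main} and the corresponding interpolation identity for $B^{\alpha}_{\tau,\tau}$. After extending $u$ from $\mathcal{O}$ to $\real^d$ (e.g.~via a Stein-type operator; any choice will do), the wavelet characterization of $B^{\alpha}_{\tau,\tau}(\real^d)$ from Theorem \ref{thm-wavelet-dec}, specialised to the adaptivity scale as in \eqref{besov-norm2}, reduces the task to bounding
\[
\mathcal{S} := \sum_{j\geq 0} 2^{jd(\frac{1}{2}-\frac{1}{p})\tau} \sum_{I\in\mathcal{D}_{j}} |\langle u,\psi_{I}\rangle|^{\tau}
\]
by $\|u|\mathcal{K}^{\gamma}_{p,a}(\mathcal{O})\|^{\tau}$.

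The first main step is the standard wavelet coefficient estimate: choosing the mother wavelets with $r>\gamma$ vanishing moments and smoothness, Taylor expansion on $\supp \psi_{I}$ combined with a local Whitney/Jackson inequality gives
\[
|\langle u,\psi_{I}\rangle| \;\lesssim\; 2^{-j(\gamma+d(\frac{1}{2}-\frac{1}{p}))}\,\|D^{\gamma}u|L_{p}(\supp \psi_{I})\|.
\]
Inserted into $\mathcal{S}$, this exactly cancels the scale-dependent prefactor and leaves the expression $\sum_{j} 2^{-j\gamma\tau} \sum_{I}\|D^{\gamma}u|L_{p}(\supp \psi_{I})\|^{\tau}$.

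The key geometric input is that $\partial\mathcal{O}$ has Hausdorff dimension $d-1$. For each $j$ and $k\in\{0,1,\ldots,j\}$, group the cubes $I\in\mathcal{D}_{j}$ with $\mathrm{dist}(I,\partial\mathcal{O})\sim 2^{-k}$ into a layer $\Lambda_{j,k}$; standard Whitney counting yields $\#\Lambda_{j,k}\lesssim 2^{jd-k}$, while $\varrho\sim 2^{-k}$ on $\supp \psi_{I}$ for every $I\in\Lambda_{j,k}$, and the supports have bounded overlap. Writing $|D^{\gamma}u|^{p}=\varrho^{-p(\gamma-a)}\cdot \varrho^{p(\gamma-a)}|D^{\gamma}u|^{p}$ therefore gives, after overlap absorption,
\[
\sum_{I\in\Lambda_{j,k}}\|D^{\gamma}u|L_{p}(\supp \psi_{I})\|^{p} \;\lesssim\; 2^{kp(\gamma-a)}\,\|u|\mathcal{K}^{\gamma}_{p,a}(\mathcal{O})\|^{p}.
\]
Since $\tau<p$ (forced by $\alpha>0$ and the adaptivity relation), Hölder's inequality upgrades this to
\[
\sum_{I\in\Lambda_{j,k}}\|D^{\gamma}u|L_{p}(\supp \psi_{I})\|^{\tau} \;\lesssim\; (2^{jd-k})^{1-\tau/p}\,2^{k(\gamma-a)\tau}\,\|u|\mathcal{K}^{\gamma}_{p,a}\|^{\tau}.
\]

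To finish, I use the identity $1-\tau/p=\alpha\tau/d$ and sum in $k$ and then in $j$ to get
\[
\mathcal{S} \;\lesssim\; \|u|\mathcal{K}^{\gamma}_{p,a}\|^{\tau} \sum_{j\geq 0} 2^{j(\alpha-\gamma)\tau}\sum_{k=0}^{j} 2^{k\tau(\gamma-a-\alpha/d)}.
\]
The outer series converges iff $\alpha<\gamma$; when the inner exponent is non-negative (the delicate case, dominated by $k=j$) the combined $j$-exponent becomes $(\alpha\tfrac{d-1}{d}-a)\tau$, whose negativity is precisely the restriction $\alpha<a\tfrac{d}{d-1}$ in the statement. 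The complementary case $\gamma-a-\alpha/d\leq 0$ is absorbed by $\alpha<\gamma$ alone. I expect the hardest part to be the careful bookkeeping in this double geometric series; the extension step and the wavelet/Whitney machinery are routine once the layer decomposition has been arranged to exploit the codimension-one nature of $\partial\mathcal{O}$, which is exactly what produces the much more restrictive factor $\tfrac{d}{d-1}$ in place of the permissive factor $\tfrac{d}{\delta}$ appearing in Theorem \ref{thm-hansen-1} for polyhedral singularities of smaller dimension.
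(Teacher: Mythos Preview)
The paper does not actually prove this statement; it is quoted from \cite{Cio-Diss}, Theorem~5.1. Your overall strategy --- reduce to a pointwise-in-$t$ embedding, characterise $B^\alpha_{\tau,\tau}$ by wavelets, decompose the index set into dyadic layers according to distance from $\partial\mathcal{O}$, apply a Whitney/moment estimate on each layer, and finish with a double geometric series --- is precisely the standard Dahlke--DeVore argument used in the cited reference, and your computation of the two convergence conditions $\alpha<\gamma$ and $\alpha<a\,d/(d-1)$ from the interior layers is correct.

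There is, however, a real gap. Your layers $k\in\{0,\ldots,j\}$ capture only cubes whose supports stay inside $\mathcal{O}$; the roughly $2^{j(d-1)}$ cubes at level $j$ whose supports meet $\partial\mathcal{O}$ are not covered. For those the key step $\sum_I\|D^\gamma u|L_p(\supp\psi_I)\|^p\lesssim 2^{kp(\gamma-a)}\|u|\mathcal{K}^\gamma_{p,a}\|^p$ fails, since $\varrho$ vanishes on part of the support and a Stein extension has no reason to satisfy the weighted bound outside $\mathcal{O}$ --- so ``any choice will do'' is not right. The cure is to take the \emph{zero} extension, abandon the vanishing-moment estimate on these boundary cubes, and use only
\[
|\langle u,\psi_I\rangle|\;\lesssim\;2^{-jd(\frac12-\frac1p)}\,\|u|L_p(\supp\psi_I\cap\mathcal{O})\|\;\lesssim\;2^{-jd(\frac12-\frac1p)}\,2^{-ja}\,\|\varrho^{-a}u|L_p(\supp\psi_I\cap\mathcal{O})\|,
\]
which invokes the zeroth-order term of the Kondratiev norm (and is where $a>0$ is genuinely needed). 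H\"older against the count $2^{j(d-1)}$ then reproduces the same restriction $\alpha<a\,d/(d-1)$. Compare Steps~4--5 in the paper's proof of Theorem~\ref{emb-hyper-1}, where the analogous boundary contribution is handled in exactly this way, with an extra $B^s_{p,p}$ assumption playing the role that the zeroth-order Kondratiev term plays here.
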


\remark{{ In contrast to 
Theorems \ref{thm-hansen-1} and  \ref{thm-hansen-gen}  the result in  Theorem \ref{emb-besov-han}  is weaker, since  here we have the restriction $\alpha<a\frac{d}{d-1}$. On the other hand, in the embedding above no knowledge about the Sobolev regularity (or regularity in the spaces $B^s_{p,\infty}$) is needed. 
}} 

Using this, we get the following Besov regularity for the solutions of \eqref{parab-Lipschitz}.

\begin{theorem}[{Besov regularity}]\label{thm-main-2}
Let $p\in [2,\infty)$, $\gamma\in [0,\infty)$, and Assumption \ref{ass-coeff} be satisfied. Then there exists $\beta_0=\beta_0(p,d,\mathcal{O})>0$ such that for 
\[
a\in \left(\frac{2-p-\beta_0}{p},\frac{2-p+\beta_0}{p}\right), 
\]
any $f\in L_p([0,T],\mathcal{K}^{\gamma}_{p,a-1}(\mathcal{O}))$, and initial data $u_0\in\mathcal{K}^{\gamma+2-\frac{2}{p}}_{p,a+\frac{p-2}{p}}(\mathcal{O})$, equation \eqref{parab-Lipschitz} admits a  solution 
$$
u\in L_p([0,T],B^{\alpha}_{\tau,\tau}(\mathcal{O})), \qquad \frac{1}{\tau}=\frac{\alpha}{d}+\frac 1p, \qquad 0<\alpha<\min\left\{\gamma+2, (a+1)\frac{d}{d-1}\right\}. 
$$
In particular, we have 
\[
\left\|u| L_p([0,T],B^{\alpha}_{\tau,\tau}(\mathcal{O}))\right\|
\leq C\left(\| f|{L_p([0,T],\mathcal{K}^{\gamma}_{p,a-1}(\mathcal{O}))}\|+\big\|u_0|{\mathcal{K}^{\gamma+2-\frac{2}{p}}_{p,a+\frac{p-2}{p}}(\mathcal{O})}\big\|\right),
\]
where $C=C(d,p,\gamma,\theta,\delta_0,K,T,\mathcal{O})$. 
\end{theorem}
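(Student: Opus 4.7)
The proof will be essentially a direct combination of the two previously stated results, namely the Kondratiev regularity of Theorem \ref{thm-main} and the embedding of Kondratiev spaces into the adaptivity scale of Besov spaces from Theorem \ref{emb-besov-han}. The plan is as follows.

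First, under the given hypotheses on $p$, $\gamma$, $a$, the coefficients (Assumption \ref{ass-coeff}), the right-hand side $f\in L_p([0,T],\mathcal{K}^{\gamma}_{p,a-1}(\mathcal{O}))$, and the initial datum $u_0\in \mathcal{K}^{\gamma+2-2/p}_{p,a+(p-2)/p}(\mathcal{O})$, I would apply Theorem \ref{thm-main} (with the same $\beta_0=\beta_0(p,d,\mathcal{O})$) to obtain a unique solution
\[
u\in L_p([0,T],\mathcal{K}^{\gamma+2}_{p,a+1}(\mathcal{O}))
\]
of problem \eqref{parab-Lipschitz}, satisfying the a priori estimate
\[
\|u|L_p([0,T],\mathcal{K}^{\gamma+2}_{p,a+1}(\mathcal{O}))\| \leq C\left(\|f|L_p([0,T],\mathcal{K}^{\gamma}_{p,a-1}(\mathcal{O}))\|+\|u_0|\mathcal{K}^{\gamma+2-2/p}_{p,a+(p-2)/p}(\mathcal{O})\|\right).
\]

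Next, I would invoke Theorem \ref{emb-besov-han} with the regularity index replaced by $\gamma+2$ and the weight index replaced by $a+1$. This yields the embedding
\[
L_p([0,T],\mathcal{K}^{\gamma+2}_{p,a+1}(\mathcal{O}))\hookrightarrow L_p([0,T], B^{\alpha}_{\tau,\tau}(\mathcal{O}))
\]
for every pair $(\alpha,\tau)$ with $\frac{1}{\tau}=\frac{\alpha}{d}+\frac{1}{p}$ and $0<\alpha<\min\{\gamma+2, (a+1)\frac{d}{d-1}\}$, which is exactly the range claimed in the theorem.

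Composing these two continuous maps gives both the stated Besov regularity of $u$ and the corresponding a priori estimate, by multiplying the two constants. There is no genuine obstacle in this argument, since all the technical work has already been carried out in the preparations: the solvability and Kondratiev estimate come from \cite{Kim11} as recorded in Theorem \ref{thm-main}, and the relevant embedding into $B^{\alpha}_{\tau,\tau}$ comes from \cite{Cio-Diss} as recorded in Theorem \ref{emb-besov-han}. One only has to check that the admissible range of $a$ in Theorem \ref{thm-main} is compatible with having $a+1>0$, so that Theorem \ref{emb-besov-han} provides a nonempty range of Besov exponents $\alpha$; this is automatic since $a+1\geq \frac{2-p-\beta_0}{p}+1=\frac{2-\beta_0}{p}>0$ as long as $\beta_0<2$, which can be assumed without loss of generality by shrinking $\beta_0$ if necessary.
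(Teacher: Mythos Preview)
Your proposal is correct and matches the paper's approach: the paper presents Theorem~\ref{thm-main-2} as an immediate consequence of Theorem~\ref{thm-main} composed with the embedding in Theorem~\ref{emb-besov-han}, which is exactly what you do. Your additional remark verifying that $a+1>0$ (so that the admissible range of $\alpha$ is nonempty) is a sensible sanity check that the paper leaves implicit.
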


\medskip 

\remark{\label{rem-B6}
We calculate for $a+1$ appearing in the upper bound for $\alpha$ that 
\begin{itemize}
\item $\ds a+1\in \left(\frac{2-\beta_0}{p},\frac{2+\beta_0}{p}\right)$ for  Lipschitz domains $\mathcal{O}$
\item $\ds a+1\in \left(\frac{1}{p},1+\frac{1}{p}\right)$ for  $C^1$ domains $\mathcal{O}$ 
\end{itemize}
In particular, for $p=2$ and $d=3$ as an upper bound for $\alpha$ we get 
\[
\alpha<
\left.\begin{cases}
\min\left\{\gamma+2,\left(1+\frac{\beta_0}{2}\right)\frac 32\right\} & \text{for Lipschitz domains} \\
\min\left\{\gamma+ 2,\frac 94\right\} & \text{for $C^1$ domains}
\end{cases}\right\}{\leq }\ \frac 94 
\]
{(every $C^1$ domain is also a Lipschitz domain)}, whereas Theorem \ref{thm-parab-Besov} yields $\alpha<3$ (since we have $m=1$ in \eqref{parab-Lipschitz}). 
}

\expl{{\bf (Heat equation)}
In \cite[L. 3.10]{Kim11} it is shown that the heat equation 
\[
u_t=\Delta u\quad \text{on } \mathcal{O}\times [0,T], \qquad u(\cdot, 0)=0,
\]
has a solution  $u\in L_p([0,T],\mathcal{K}^{2}_{p,\frac 2p}(\mathcal{O}))$. Using Theorem \ref{thm-main-2} we obtain for the Besov regularity of the solution (now $\gamma \gg 0$) that 
\[
u\in L_p\left([0,T],B^{\alpha}_{\tau,\tau}(\mathcal{O})\right)\quad \text{with}\quad \frac{1}{\tau}=\frac{\alpha}{d}+\frac 1p\quad \text{and}\quad 0<\alpha <{\frac 2p \cdot}\frac{d}{d-1}. 
\]
For $p=2$ and $d=3$ this {even} yields $\alpha<{\frac 32}$. {However, comparing this with our considerations in Example \ref{ex_heat_eq},} we see that {for} polyhedral cones $K\subset \real^3$ {our results concerning Besov regularity} are  better than what can be expected for the heat equation on  arbitrary Lipschitz domains. We {also} refer to \cite{AG12} in this context, where the {investigations} (subject to some restrictions) lead to ${\alpha}<\frac 32s $, {where $s$ stands for the (fractional) Sobolev regularity of the solution}. 

}

\remark{
Since $\beta_0$ depends also on $p$,  { Theorem \ref{thm-main} is not applicable in general if $p>2$ as there might be a problem to fulfill the assumptions on $a$. In this case Theorem \ref{thm-main} does not yield the existence of a solution $u\in L_p(\mathcal{O}_T)$ even if the data of the equation is assumed to be arbitrarily smooth.}  {As a  deterministic   counterexample the  heat equation is discussed after \cite[Th. 3.13]{Cio-Diss}. }
Thus,  one should distinguish between $p\in [2,\infty)$ and $p\in [2,p_0)$ as was done in \cite[Th. 3.13, Th. 5.2]{Cio-Diss} also in the deterministic case. 
}

\section{Hyperbolic Besov regularity}\label{Sect-5}

Our special Lipschitz domains $\Omega$ from Definition \ref{def_special_Lip} that we deal with  in this section are not bounded polyhedral domains as considered in Theorems \ref{thm-hansen-1}, \ref{thm-hansen-gen}. However, regarding embeddings of the Kondratiev spaces into the scale of Besov spaces,  modifying the arguments from \cite[Sect. 5, Thm. 3]{Han15}, we show that the results can be generalized to our context.

\begin{theorem}\label{emb-hyper-1}
Let $\Omega\subset \real^d$ be a special Lipschitz domain from Definition \ref{def_special_Lip}. Then we have a continuous embedding 
\begin{equation}
\mathcal{K}^m_{p,a}(\Omega)\cap B^s_{p,p}(\Omega)\hookrightarrow B^r_{\tau,\tau}(\Omega), \qquad \frac{1}{\tau}=\frac rd+\frac 1p, \qquad 1<p<\infty,
\end{equation}
for all $0\leq r<\min(m, \frac{sd}{{d-1}})$ and $a>\frac{\delta}{d}r$, where $\delta=d-2=\dim(l_0)$. 
\end{theorem}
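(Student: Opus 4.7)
The plan is to follow the wavelet-based proof of the embedding theorems of Hansen \cite{Han15}, with modifications reflecting the geometry of the special Lipschitz domain $\Omega$, whose singular set $l_0$ is a compact $\delta$-dimensional manifold with $\delta=d-2$. First, extend $f\in\mathcal{K}^m_{p,a}(\Omega)\cap B^s_{p,p}(\Omega)$ to $g\in B^s_{p,p}(\mathbb{R}^d)$ by means of Stein's universal extension operator (available on Lipschitz domains). By the wavelet characterization of Theorem \ref{thm-wavelet-dec}, applied in the form appropriate for $B^r_{\tau,\tau}$, cf.\ \eqref{besov-norm2}, and using the identity $\frac{1}{\tau}-\frac{1}{p}=\frac{r}{d}$, it suffices to estimate
\[
\sum_{j\geq 0}2^{jd\tau(1/2-1/p)}\sum_{(I,\psi)\in\mathcal{D}_j\times\Psi'}|\langle g,\psi_I\rangle|^\tau.
\]

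The key device is a geometric decomposition of the wavelet index set at each scale $j$. Let $\Lambda_j^{\partial}$ collect the wavelets whose support meets $\partial\Omega$; a tube-volume estimate gives $\#\Lambda_j^{\partial}\lesssim 2^{j(d-1)}$. For the remaining wavelets, supported in $\Omega$, split further according to the distance to $l_0$: let $\Lambda_{j,k}$ contain those at distance $\sim 2^{-k}$ from $l_0$, $k=0,\ldots,j$. Since $\dim l_0=\delta$, the tube estimate yields $\#\Lambda_{j,k}\lesssim 2^{jd}2^{-k(d-\delta)}$.

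For $\Lambda_j^{\partial}$ I would exploit the Besov regularity of $g$: the standard wavelet bound $\sum_{I}|\langle g,\psi_I\rangle|^p\lesssim 2^{-jp(s+d/2-d/p)}\|g\|_{B^s_{p,p}}^p$, together with H\"older's inequality (exponents $p/\tau$ and its conjugate, noting that $1-\tau/p=r\tau/d$), produces after multiplication by $2^{jd\tau(1/2-1/p)}$ a contribution of order $2^{j\tau[(d-1)r/d-s]}$, which is summable in $j$ precisely when $r<\frac{sd}{d-1}$. For $\Lambda_{j,k}$ I would use the Kondratiev regularity: the vanishing moments of $\psi_I$ (of order at least $m$, cf.\ \eqref{wavelet-3}) together with a Bramble--Hilbert/Poincar\'e estimate, combined with $\rho\sim 2^{-k}$ on $\mathrm{supp}\,\psi_I$, give
\[
|\langle f,\psi_I\rangle|^p \lesssim 2^{-jp(m+d/2-d/p)}\,2^{kp(m-a)}\int_{Q(I)}\rho(x)^{p(m-a)}|D^m f(x)|^p\,\ud x.
\]
Summing over $I\in\Lambda_{j,k}$, applying H\"older with the count $\#\Lambda_{j,k}$, and multiplying by $2^{jd\tau(1/2-1/p)}$ reduces the Kondratiev contribution to a double series dominated by $2^{j\tau(r-m)}\,2^{k\tau[m-a-(d-\delta)r/d]}\,\|f\|_{\mathcal{K}^m_{p,a}}^\tau$.

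Summing $k=0,\ldots,j$ distinguishes two regimes. If $m-a-(d-\delta)r/d<0$, the $k$-sum is $O(1)$ and the $j$-series converges under $r<m$. If $m-a-(d-\delta)r/d>0$, the $k$-sum is dominated by the endpoint $k=j$, producing $2^{j\tau[r\delta/d-a]}$, summable under $a>\frac{\delta}{d}r$; the borderline case is handled by an extra logarithmic factor in $j$ and is still absorbed under $r<m$. Combined with the Besov estimate for $\Lambda_j^{\partial}$, this reproduces precisely the three hypotheses $r<m$, $r<\frac{sd}{d-1}$, and $a>\frac{\delta}{d}r$. The main obstacle lies in the exponent book-keeping in the Kondratiev regime and in verifying that Stein's extension is compatible with the decomposition at $\partial\Omega$, so that the two wavelet families jointly (and essentially without overlap) cover the entire index set at each scale $j$.
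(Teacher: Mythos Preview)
Your approach is essentially the paper's proof: extend to $\mathbb{R}^d$, characterize $B^r_{\tau,\tau}$ by wavelets, split the index set by distance to the singular set, use the Kondratiev norm (via Whitney/vanishing moments) for wavelets away from $l_0$, and use the Besov norm for wavelets near the boundary. Your dyadic shelling in $k$ is equivalent to the paper's linear shelling $k2^{-j}\le\rho_I<(k+1)2^{-j}$, and your case analysis in the Kondratiev regime reproduces the same three subcases and the same conditions $r<m$, $a>\tfrac{\delta}{d}r$.

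There is, however, one class of wavelets that slips through your decomposition. You put into $\Lambda_j^{\partial}$ only those wavelets whose support \emph{meets} $\partial\Omega$, and your interior shells $\Lambda_{j,k}$, $k=0,\dots,j$, cover only distances $\gtrsim 2^{-j}$ from $l_0$. But there exist wavelets with $Q(I)\subset\Omega$ and $0<\rho_I\ll 2^{-j}$ (a small cube sitting just inside the dihedral edge without actually touching either face). For these your claim ``$\rho\sim 2^{-k}$ on $\mathrm{supp}\,\psi_I$'' with $k=j$ fails: $\rho_I$ can be arbitrarily small, so the factor $\rho_I^{a-m}$ (for $a<m$) is not controlled by $2^{j(m-a)}$, and the Kondratiev bound you wrote is not valid there. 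The paper treats exactly this class separately (its Step~4) using the \emph{Besov} norm: the count of such wavelets is $\lesssim 2^{j\delta}$ (a tube of radius $\sim 2^{-j}$ around the $\delta$-dimensional $l_0$), yielding the condition $r<sd/\delta$, which is weaker than the $r<sd/(d-1)$ already imposed by $\Lambda_j^{\partial}$ and hence costs nothing. The quickest repair on your side is to enlarge $\Lambda_j^{\partial}$ to all wavelets at distance $\le C\,2^{-j}$ from $\partial\Omega$ (same cardinality $\lesssim 2^{j(d-1)}$, same Besov argument); then every remaining interior wavelet genuinely satisfies $\rho_I\gtrsim 2^{-j}$ and your Kondratiev analysis goes through unchanged.
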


\begin{proof}
Since for $r=0$ the result is clear, we assume in the sequel that $r>0$ and $0<\tau<p$. The proof is based on the wavelet characterization of Besov spaces presented in Subsection \ref{sect-Besov}.  Theorem \ref{thm-wavelet-dec} implies that it is enough to show  
\[
\left(\sum_{(I,\psi)\in \Lambda}|I|^{\left(\frac 1p-\frac 12\right)\tau}|\langle \tilde{u},\psi_I\rangle|^{\tau}\right)^{1/\tau}\leq c \max \{\|u|\mathcal{K}^m_{p,a}(\Omega)\|, \|u|B^s_{p,p}(\Omega)\|\}. 
\]

\underline{\em Step 1:} We explain why the first term  in \eqref{besov-decomp}  can be incorporated in the estimates that follow in Step 2.  Since our domain $\Omega$ is Lipschitz, we can extend every $u\in B^s_{p,p}(\Omega)$ to some function $\tilde{u}=Eu\in B^s_{p,p}(\real^d)$. Then the first term reads as 
\[
\sum_{k\in \mathbb{Z}^d}\langle \tilde{u}, {\phi}(\cdot -k)\rangle {\phi}(\cdot - k). 
\] 
Since ${\phi}$ shares the same smoothness and support properties as the wavelets $\psi_I$ for $|I|=1$ (note that below the vanishing moments of $\psi_I$  only become relevant for $|I|<1$), the coefficients $\langle \tilde{u}, {\phi}(\cdot -k)\rangle$ can be treated exactly like any of the coefficients $\langle \tilde{u},\psi_I\rangle$ in Step 2. 

\underline{\em Step 2:}  For our analysis we shall split the index set $\Lambda$ as follows.  For $j\in \nat_0$ the refinement level $j$ is denoted by 
\[
\Lambda_j:=\{(I,\Psi)\in \Lambda: \ |I|=2^{-jd}\}.
\]
Furthermore, for $k\in \nat_0$ put 
\[
\Lambda_{j,k}:=\{(I,\psi)\in \Lambda_j: \ k2^{-j}\leq \rho_I<(k+1)2^{-j}\},
\]
where $\rho_I=\inf_{x\in Q(I)} \rho(x)$. In particular, we have $\Lambda_j=\bigcup_{k=0}^{\infty}\Lambda_{j,k}$ and $\Lambda=\bigcup_{j=0}^{\infty}\Lambda_j$. \\
We consider first the situation when $\rho_I>0$ corresponding to  $k\geq 1$ and therefore put $\Lambda_j^0=\bigcup_{k\geq 1}\Lambda_{k,j}$. {Moreover, we require $Q(I)\subset \Omega$.} Recall Whitney's estimate  regarding approximation with polynomials,  cf. \cite[Sect.~6.1]{DV98}, which states that for every $I$ there exists a polyomial $P_I$ of degree less than $m$, such that 
\[
\|\tilde{u}-P_I|L_p(Q(I))\|\leq c_0 |Q(I)|^{m/d}|\tilde{u}|_{W^m_p(Q(I))}\leq c_1|I|^{m/d}|\tilde{u}|_{W^m_p(Q(I))}
\]
for some constant $c_1$ independent of $I$ and $u$, where 
$$|u|_{W^m_p(Q(I))}:=\left(\int_{Q(I)}|\nabla^m u(x)|^p\ud x\right)^{1/p}.$$
Note that $\psi_I$ satisfies moment conditions of order up to $m$, i.e., it is orthogonal to any polynomial of degree up to $m-1$. Thus, using H\"older's inequality with $p>1$ we estimate 
\begin{eqnarray}
|\langle \tilde{u},\psi_I\rangle|
&=&|\langle \tilde{u}-P_I,\psi_I\rangle|\leq \|\tilde{u}-P_I|L_p(Q(I))\|\cdot \|\psi_I|L_{p'}(Q(I))\|\notag\\
&\leq & c_1 |I|^{m/d}|\tilde{u}|_{W^m_p(Q(I))}|I|^{\frac 12-\frac 1p}\notag\\
&\leq & c_1 |I|^{\frac md+\frac 12-\frac 1p}\rho_I^{a-m}\left(\sum_{|\alpha|=m}\int_{Q(I)}|\rho(x)|^{m-a}\partial^{\alpha}\tilde{u}(x)|^p \ud x\right)^{1/p}\notag\\
&=:&c_1 |I|^{\frac md +\frac 12-\frac 1p}\rho_I^{a-m}\mu_I. \label{est-hyper}
\end{eqnarray}
{Note that in the third step above we require $a<m$.} 
On the refinement level $j$, using H\"older's inequality with $\frac{p}{\tau}>1$, we find
\begin{align*}
\sum_{(I,\psi)\in \Lambda^0_j}&|I|^{\left(\frac 1p-\frac 12\right)\tau}|\langle \tilde{u},\psi_I\rangle|^{\tau}\\
&\leq  \sum_{(I,\psi)\in \Lambda_j^0}\left(|I|^{\frac md}\rho_I^{a-m}\mu_I\right)^{\tau}\\
&\leq  c_1 \left(\sum_{(I,\psi)\in \Lambda_j^0}\left(|I|^{\frac md\tau}\rho_I^{(a-m)\tau}\right)^{\frac{p}{p-\tau}}\right)^{\frac{p-\tau}{p}}\left(\sum_{(I,\psi)\in \Lambda_j^0} \mu_I^p\right)^{\tau/p}. 
\end{align*}

For the second factor we observe that there is a controlled overlap between the cubes $Q(I)$, meaning each $x\in \Omega$ is contained in a finite number of cubes independent of $x$, such that we get
\begin{eqnarray*}
\left(\sum_{(I,\psi)\in \Lambda_j^0} \mu_I^p\right)^{1/p}
&=& \left(\sum_{(I,\psi)\in \Lambda_j^0} \sum_{|\alpha|=m}\int_{Q(I)}|\rho(x)^{m-a}\partial^{\alpha}\tilde{u}(x)|^p\ud x\right)^{1/p}\\
&\leq & c_2 \left(\sum_{|\alpha|=m}\int_{\Omega}|\rho(x)^{m-a}\partial^{\alpha}\tilde{u}(x)|^p\ud x\right)^{1/p}
\leq  c_2\|u|\mathcal{K}^{m}_{p,a}(\Omega)\|. 
\end{eqnarray*}
For the first factor  by choice or $\rho$ we always have $\rho_I\leq 1$, hence the index $k$ is at most $2^j$ for the sets $\Lambda_{j,k}$ to be non-empty. 
The number of elements in $\Lambda_{j,k}$  is bounded by $k^{d-1-\delta}2^{j\delta}$. With this we find 
\begin{align*}
\Bigg(\sum_{(I,\psi)\in \Lambda^0_j}&\left(|I|^{\frac md\tau}\rho_I^{(a-m)\tau}\right)^{\frac{p}{p-\tau}}\Bigg)^{\frac{p-\tau}{p}}\\
&\leq  \left({\sum_{k=1}^{2^j}\sum_{(I,\psi)\in \Lambda_{j,k}}}\left(2^{-jm\tau}(k2^{-j})^{(a-m)\tau}\right)^{\frac{p}{p-\tau}}\right)^{\frac{p-\tau}{p}}\\
&\leq  \left(\sum_{k=1}^{2^j}\sum_{(I,\psi)\in \Lambda_{j,k}}\left(2^{-ja\tau}k^{(a-m)\tau}\right)^{\frac{p}{p-\tau}}\right)^{\frac{p-\tau}{p}}\\
&\leq  \left(
c_3 2^{-ja\frac{p\tau}{p-\tau}}\sum_{k=1}^{2^j}k^{(a-m)\frac{p\tau}{p-\tau}}k^{d-1-\delta} 2^{j\delta}
\right)^{\frac{p-\tau}{p}}\\
&= c_4 2^{-ja\tau} 2^{j\delta\frac{p-\tau}{p}}\left(\sum_{k=1}^{2^j}k^{(a-m)\frac{p\tau}{p-\tau}+d-1-\delta} 
\right)^{\frac{p-\tau}{p}}.\\
\end{align*}
Looking at the value of the exponent in the last sum we see that 
\[
(a-m)\frac{p\tau}{p-\tau}+d-1-\delta>-1 \quad \iff \quad a-m+r\frac{d-\delta}{d}>0,
\]
which leads to 
\begin{align}
\Bigg(\sum_{(I,\psi)\in \Lambda^0_j}& \left(|I|^{\frac md\tau}\rho_I^{(a-m)\tau}\right)^{\frac{p}{p-\tau}}\Bigg)^{\frac{p-\tau}{p}}\notag \\
&\leq  c_5 2^{-ja\tau}2^{j\delta\frac{p-\tau}{p}}
\begin{cases}
2^{j\left((a-m)\tau+(d-\delta)\frac{p-\tau}{p}\right)}, & a-m+r\frac{d-\delta}{d}>0,\\
(j+1)^{\frac{p-\tau}{p}}, & a-m+r\frac{d-\delta}{d}=0, \label{cases-hyper}\\
1,& a-m+r\frac{d-\delta}{d}<0.
\end{cases}
\end{align}
{The case  $a>m$  can be treated in the same way as above by taking out ${\tilde{\rho}_I}^{a-m}$ with $\tilde{\rho}_I:=\inf_{x\in Q(I)}\rho(x)$ instead of $\rho_I^{a-m}$ in the integral appearing in \eqref{est-hyper}. The values of $\rho_I$ and $\tilde{\rho}_I$ are comparable in this situation, since we consider cubes which do not intersect with the boundary, i.e., we have $k\geq 1$. In particular, in \eqref{cases-hyper}  only the first case occurs if $a>m$. }\\ 
\underline{\em Step 3:} We now put $\Lambda^0:=\bigcup_{j\geq 0}\Lambda_j^0$. Summing the first line of the last estimate over all $j$, we obtain 
\begin{align*}
\sum_{(I,\psi)\in \Lambda^0}&|I|^{\left(\frac 1p-\frac 12\right)\tau}|\langle \tilde{u}, \psi_I\rangle|^{\tau}\\
&\leq  c_6\sum_{j=0}^{\infty}2^{-j(m\tau-d\frac{p-\tau}{p})}\|u|\mathcal{K}^m_{p,a}(\Omega)\|^{\tau}\lesssim \|u|\mathcal{K}^m_{p,a}(\Omega)\|^{\tau}<\infty,
\end{align*}
if the geometric series converges, which happens if 
\[
m\tau>d\frac{p-\tau}{p}\quad \iff\quad m>d\frac rd\quad  \iff \quad m>r.
\]
Similarly, in the second case we see that 
\begin{align*}
\sum_{(I,\psi)\in \Lambda^0}&|I|^{\left(\frac 1p-\frac 12\right)\tau}|\langle \tilde{u}, \psi_I\rangle|^{\tau}\\
&\leq c_7 \sum_{j=0}^{\infty}2^{-j(a\tau-\delta\frac{p-\tau}{p})}(j+1)^{\frac{p-\tau}{p}}\|u|\mathcal{K}^m_{p,a}(\Omega)\|^{\tau}\lesssim \|u|\mathcal{K}^m_{p,a}(\Omega)\|^{\tau}<\infty,
\end{align*}
where the series converges if
\[
a\tau>\delta \frac{p-\tau}{p},\quad \text{i.e.,} \quad a>\delta \frac rd, \quad \text{i.e.,} \quad m>r\frac{d-\delta}{d}+\frac{\delta}{d}r=r,\quad \text{i.e.,} \quad m>r,
\]
which is the same condition as before. Finally, in the third case we find 
\begin{align*}
\sum_{(I,\psi)\in \Lambda^0}&|I|^{\left(\frac 1p-\frac 12\right)\tau}|\langle \tilde{u}, \psi_I\rangle|^{\tau}\\
&\leq c_8 \sum_{j=0}^{\infty}2^{-j(a\tau-\delta \frac{p-\tau}{p})}\|u|\mathcal{K}^m_{p,a}(\Omega)\|^{\tau}\lesssim \|u|\mathcal{K}^m_{p,a}(\Omega)\|^{\tau}<\infty,
\end{align*}
whenever
\[
a\tau>\delta\frac{p-\tau}{p}\quad \iff \quad a>\delta\frac rd
\]
as in the second case above. \\
\underline{\em Step 4:} We need to consider the sets $\Lambda_{j,0}$, i.e., the wavelets close to $l_0$. Here, we shall make use of the assumption $\tilde{u}\in B^s_{p,p}(\real^d)$. Since the number of elements in  $\Lambda_{j,0}$ is bounded from above by $c_9 2^{j\delta}$ we estimate using H\"older's inequality with $\frac{p}{\tau}>1$ and obtain 
\begin{align*}
\sum_{(I,\psi)\in \Lambda_{j,0}}&|I|^{\left(\frac 1p-\frac 12\right)\tau}|\langle \tilde{u}, \psi_I\rangle|^{\tau}\\
&\leq  c_9^{\frac{p-\tau}{p}}2^{j\delta\frac{p-\tau}{p}}2^{-jd\left(\frac 1p-\frac 12\right)\tau}\left(\sum_{(I,\psi)\in \Lambda_{j,0}}|\langle\tilde{u},\psi_I\rangle|^p\right)^{\tau/p}\\
&= c_9^{\frac{p-\tau}{p}}2^{j\delta\frac{p-\tau}{p}}2^{-js\tau}\left(\sum_{(I,\psi)\in \Lambda_{j,0}}2^{j\left(s+\frac d2-\frac dp\right)p}|\langle\tilde{u},\psi_I\rangle|^p\right)^{\tau/p}.
\end{align*}
Summing up over $j$ and once more using H\"older's inequality with $\frac{p}{\tau}>1$ gives 
\begin{eqnarray*}
&\ds \sum_{j=0}^{\infty}&\sum_{(I,\psi)\in \Lambda_{j,0}}|I|^{\left(\frac 1p-\frac 12\right)\tau}|\langle \tilde{u}, \psi_I\rangle|^{\tau}\\
&\leq & c_9^{\frac{p-\tau}{p}}\sum_{j=0}^{\infty}2^{j\delta\frac{p-\tau}{p}}2^{-js\tau}\left(\sum_{(I,\psi)\in \Lambda_{j,0}}2^{j\left(s+\frac d2-\frac dp\right)p}|\langle\tilde{u},\psi_I\rangle|^p\right)^{\tau/p}\\
&\leq & c_9^{\frac{p-\tau}{p}}\left(\sum_{j=0}^{\infty}2^{j\delta }2^{-js\tau\frac{p}{p-\tau}}\right)^{\frac{p-\tau}{p}}\cdot \left(\sum_{j=0}^{\infty}\sum_{(I,\psi)\in \Lambda_{j,0}}2^{j\left(s+\frac d2-\frac dp\right)p}|\langle\tilde{u},\psi_I\rangle|\right)^{\tau/p}\\
&\lesssim  &\|\tilde{u}|B^s_{p,p}(\real^d)\|^{\tau}\lesssim \|u|B^s_{p,p}(\Omega)\|^{\tau},
\end{eqnarray*}
under the condition 
\[
\delta<\frac{sp\tau}{p-\tau}\quad \iff \quad \frac{s}{\delta}>\frac{1}{\tau}-\frac 1p=\frac rd \quad \iff \quad r<\frac{sd}{\delta}.
\]
{\underline{\em Step 5:} Finally, we need to consider those $\psi_I$ whose support intersect $\partial \Omega$. In this case we can estimate similar as in Step 4 with $\delta$ replaced by $d-1$. This results in the condition 
\[
\sum_{(I,\psi)\in \Lambda: \ \supp \psi_I\cap \partial \Omega\neq \emptyset}
|I|^{\left(\frac 1p-\frac 12\right)\tau}|\langle \tilde{u},\psi_I\rangle|^{\tau}
\lesssim \|\tilde{u}|B^{s}_{p,p}(\real^d)\|^{\tau}\lesssim \|u|B^s_{p,p}(\Omega)\|^{\tau}
\]
if $r<\frac{sd}{d-1}$. 
Altogether, we have proved 
\[
\|u|B^r_{\tau,\tau}(\Omega)\|\leq \|\tilde{u}|B^r_{\tau,\tau}(\real^d)\|
\lesssim \|u|B^s_{p,p}(\Omega)\|+\|u|\mathcal{K}^m_{p,a}(\Omega)\|,
\]
with constants independent of $u$. } 
 \end{proof}


\medskip 

As an immediate consequence of Theorem \ref{emb-hyper-1} and the definition of corresponding function spaces on $\Omega_T$ we have the following generalized embedding result. \\

\begin{theorem}\label{emb-hyper-2}
Let $\Omega\subset \real^d$ be a special Lipschitz domain from Definition \ref{def_special_Lip}.  Then for $1<p<\infty$ and $0<q\leq \infty$, we have 
\[
L_q((0,T),\mathcal{K}^{m}_{p,a}(\Omega))\cap L_q((0,T),B^s_{p,p}(\Omega))\hookrightarrow L_q\left((0,T), B^{r}_{\tau,\tau}(\Omega)\right), \quad \frac{1}{\tau}=\frac rd+\frac 1p, 
\] 
for all $0\leq r<\min(m,\frac{sd}{{d-1}})$ and $a>\frac{\delta}{d}r$, where $\delta=d-2=\dim(l_0)$. 
\end{theorem}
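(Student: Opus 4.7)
The plan is to reduce the time-integrated embedding to the pointwise-in-$t$ embedding provided by Theorem~\ref{emb-hyper-1} and then integrate. Set
\[
X_1:=\mathcal{K}^m_{p,a}(\Omega),\qquad X_2:=B^s_{p,p}(\Omega),\qquad X:=B^r_{\tau,\tau}(\Omega),
\]
so that Theorem~\ref{emb-hyper-1} supplies the static embedding $X_1\cap X_2\hookrightarrow X$, i.e.\ there exists a constant $C>0$ (depending only on $\Omega,m,s,a,p,r,\tau$, not on the particular function) such that $\|v|X\|\leq C\left(\|v|X_1\|+\|v|X_2\|\right)$ for every $v\in X_1\cap X_2$. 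Under the hypotheses on $r,s,a$ in the present theorem, the static embedding is available, so this $t$-independent constant $C$ is at our disposal.

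Next, I would apply this inequality pointwise at almost every $t\in(0,T)$ to the function $u(\cdot,t)$. This yields
\[
\|u(\cdot,t)|X\|\leq C\bigl(\|u(\cdot,t)|X_1\|+\|u(\cdot,t)|X_2\|\bigr)\qquad\text{for a.e. }t\in(0,T).
\]
Raising to the power $q$ (with the obvious modification for $q=\infty$, replacing integration by essential supremum) and integrating over $(0,T)$ gives, after an application of the triangle inequality in $L_q(0,T)$ (together with the elementary inequality $(a+b)^q\leq 2^{q-1}(a^q+b^q)$ when $1\leq q<\infty$, or its quasi-norm analogue when $0<q<1$),
\[
\left(\int_0^T \|u(\cdot,t)|X\|^q\,\ud t\right)^{1/q}\lesssim \left(\int_0^T\|u(\cdot,t)|X_1\|^q\,\ud t\right)^{1/q}+\left(\int_0^T\|u(\cdot,t)|X_2\|^q\,\ud t\right)^{1/q}.
\]
By the definition of $L_q((0,T),Y)$ for a (quasi-)Banach space $Y$, this is exactly
\[
\|u|L_q((0,T),X)\|\lesssim \|u|L_q((0,T),X_1)\|+\|u|L_q((0,T),X_2)\|\sim \|u|L_q((0,T),X_1)\cap L_q((0,T),X_2)\|,
\]
which is the desired embedding.

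There is no real obstacle here; the argument is essentially identical to that of Theorem~\ref{thm-hansen-gen} (with $k=0$), the only new ingredient being that the underlying static embedding now comes from Theorem~\ref{emb-hyper-1} rather than from Theorem~\ref{thm-hansen-1}. The measurability of $t\mapsto\|u(\cdot,t)|X\|$ is inherited from the measurability of $t\mapsto\|u(\cdot,t)|X_1\|$ and $t\mapsto\|u(\cdot,t)|X_2\|$ via the pointwise bound above, so no extra care is required on that front. The constant in the final embedding depends only on $C$, $q$ and the parameters listed in Theorem~\ref{emb-hyper-1}, and is in particular independent of $u$ and of $T$.
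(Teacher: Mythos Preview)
Your proof is correct and is exactly the argument the paper has in mind: the paper states this theorem as ``an immediate consequence of Theorem~\ref{emb-hyper-1} and the definition of corresponding function spaces on $\Omega_T$'' without further details, and your write-up mirrors precisely the proof of the analogous Theorem~\ref{thm-hansen-gen} (with $k=0$), only replacing the static embedding of Theorem~\ref{thm-hansen-1} by that of Theorem~\ref{emb-hyper-1}.
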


Now Lemma \ref{thm-hyperbolic-weight-sob-reg} together with Theorem \ref{emb-hyper-2} give the following result concerning the Besov regularity of the solution to \eqref{hyp-1a}. 

\begin{theorem}[Hyperbolic Besov regularity]
Let $\Omega\subset \real^d$ be a special Lipschitz domain from Definition \ref{def_special_Lip}. Furthermore, let  $m\in \nat$, $m\geq 2$, $m-1\leq a<\min(m,\frac{\pi}{\omega}-1)$, and   assume $f$ satisfies   
\begin{itemize}
\item[(i)] $\partial_{t^j} f\in L_{\infty}((0,T),\mathcal{K}^{m-j}_{2,a-j}(\Omega))$, $0\leq j\leq m-1$, 
\item[(ii)] $\partial_{t^j} f(x,0)=0$, $0\leq j\leq m-2$.
\end{itemize}
Then for the generalized solution $u$ of problem \eqref{hyp-1a}  we have 
\[
u\in L_{\infty}((0,T),B^r_{\tau,\tau}(\Omega)), \qquad \frac{1}{\tau}=\frac rd+\frac 1p, \qquad 0\leq r<\min \left\{m,\frac{d}{{d-1}}\right\}.
\]
In particular, the following a priori estimate holds 
\[
\|u| L_{\infty}((0,T),B^{r}_{\tau,\tau}(\Omega))\|\lesssim \sum_{j=0}^{m-1} \|\partial_{t^j} f|L_{\infty}((0,T),\mathcal{K}^{m-j}_{2,a}(\Omega))\|. 
\]
\end{theorem}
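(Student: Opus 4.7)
The plan is to combine the weighted Sobolev regularity result from Lemma \ref{thm-hyperbolic-weight-sob-reg} with the embedding result in Theorem \ref{emb-hyper-2}, in complete analogy with how Theorem \ref{thm-parab-Besov} was derived from Theorem \ref{thm-weighted-sob-reg} via Theorem \ref{thm-hansen-gen} in the parabolic setting.

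First, I would apply Lemma \ref{thm-hyperbolic-weight-sob-reg} with $j=0$. Under the stated hypotheses on $f$, this yields simultaneously
\[
u\in L_{\infty}((0,T),W^1_2(\Omega))\quad\text{and}\quad u\in L_{\infty}((0,T),\mathcal{K}^m_{2,a}(\Omega)),
\]
together with the a priori bound
\[
\|u|L_{\infty}((0,T),W^1_2(\Omega))\|+\|u|L_{\infty}((0,T),\mathcal{K}^m_{2,a}(\Omega))\|\lesssim \sum_{j=0}^{m-1}\|\partial_{t^j}f|L_{\infty}((0,T),\mathcal{K}^{m-j}_{2,a-j}(\Omega))\|.
\]
Invoking the classical identity $W^1_2(\Omega)=B^1_{2,2}(\Omega)$ (with equivalent norms) upgrades the first membership to $u\in L_{\infty}((0,T),B^1_{2,2}(\Omega))$, so that
\[
u\in L_{\infty}((0,T),\mathcal{K}^m_{2,a}(\Omega))\cap L_{\infty}((0,T),B^1_{2,2}(\Omega)).
\]

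Next, I would apply Theorem \ref{emb-hyper-2} with the parameters $p=2$, $q=\infty$, $s=1$, and with the Kondratiev smoothness equal to $m$ and weight $a$. This yields the embedding
\[
L_{\infty}((0,T),\mathcal{K}^m_{2,a}(\Omega))\cap L_{\infty}((0,T),B^1_{2,2}(\Omega))\hookrightarrow L_{\infty}((0,T),B^r_{\tau,\tau}(\Omega)),
\]
with $\frac{1}{\tau}=\frac{r}{d}+\frac{1}{2}$ and any $0\leq r<\min\bigl(m,\tfrac{sd}{d-1}\bigr)=\min\bigl(m,\tfrac{d}{d-1}\bigr)$, from which the desired regularity and a priori estimate follow directly by composing the two bounds.

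The only nontrivial bookkeeping, which I would expect to be the main (though still mild) obstacle, is verifying that the hypothesis $a>\tfrac{\delta}{d}r=\tfrac{d-2}{d}r$ required by Theorem \ref{emb-hyper-2} is automatically satisfied under our assumptions. This reduces to the elementary chain
\[
\tfrac{d-2}{d}r<\tfrac{d-2}{d}\cdot\tfrac{d}{d-1}=\tfrac{d-2}{d-1}<1\leq m-1\leq a,
\]
where we used $r<\tfrac{d}{d-1}$ together with $m\geq 2$; hence the embedding is applicable and the proof is complete.
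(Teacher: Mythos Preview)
Your proof is correct and follows essentially the same approach as the paper: apply Lemma \ref{thm-hyperbolic-weight-sob-reg} to obtain $u\in L_\infty((0,T),W^1_2(\Omega))\cap L_\infty((0,T),\mathcal{K}^m_{2,a}(\Omega))$, then invoke Theorem \ref{emb-hyper-2} with $s=1$, $p=2$, $q=\infty$, and finally verify that the weight condition $a>\tfrac{\delta}{d}r$ is automatic via the same elementary chain using $m\geq 2$ and $d\geq 3$.
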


\begin{proof}
According to Lemma \ref{thm-hyperbolic-weight-sob-reg} we know  $u\in L_{\infty}((0,T),W^1_2(\Omega))\cap L_{\infty}((0,T),\mathcal{K}^{m}_{2,a}(\Omega))$ for $m-1\leq a<\min(m,\frac{\pi}{\omega}-1)$.  
Now using Theorem \ref{emb-hyper-2} with $s=1$ and $\max(m-1,\frac{\delta}{d}r)<a<\min(m,\frac{\pi}{\omega}-1)$ yields the desired embedding result. As for the restriction on ${a}$, {using $m\geq 2$ and $d\geq 3$},  we further observe that 
\[
\frac{\delta}{d}r < \frac{d-2}{d}\min\left(m,\frac{d}{{d-1}}\right) = \frac{d-2}{d}\cdot \frac{d}{{d-1}} \leq  1. 
\]
Since $m-1\geq 1>\frac{\delta}{d}r$  the restriction on $a$ reads as  $m-1\leq a<\min(m,\frac{\pi}{\omega}-1)$ {(note that in this case equality in the lower bound is also possible)}. 
\end{proof}

\remark{There are more results in \cite{LT15} compared to what  we used in this section. In particular, in \cite[Thm.~3.5]{LT15}   weighted Sobolev regularity of nonlinear hyperbolic problems was investigated.   
Therefore, using a fixed point theorem as in subsection \ref{Subsect-4.2}  it should  be possible to study Besov regularity of nonlinear hyperbolic problems as well. But this is out of our scope for now and  will  possibly be  treated in a forthcoming paper. 
}

\section{Relations to Adaptive Algorithms} \label{Practice}

In Section \ref{introduction} we already sketched  why we expect that the results proved in this paper will have some impact concerning the theoretical foundation of adaptive algorithms.  In this section, we want to explain these relationships in more detail. 

Let us start with adaptive wavelet algorithms as e.g. discussed in \cite{CDD1, Ste09}.  Let $\Psi=\{\psi_I:(I,\psi)\in\Lambda\}$ be a wavelet system with sufficiently high differentiability and vanishing moments, such that all relevant (unweighted) Sobolev and Besov spaces can be characterized in terms of expansion coefficients w.r.t. $\Psi$, see again Subsection \ref{sect-Besov}.  Then, the best thing  we can expect from an adaptive numerical algorithm based on this wavelet basis is that it realizes the convergence order of  best $N$-term wavelet approximation schemes. In this sense, best $N$-term wavelet approximation serves as the benchmark for the performance of adaptive algorithms.  Let $\mathcal{O}\subset \real^d$ denote some bounded Lipschitz domain. The error of best $N$-term approximation is defined by

\begin{equation}\label{errorbestN} \sigma_N\bigl(u;L_p(\mathcal{O})\bigr)=\inf_{\Gamma\subset\Lambda:\#\Gamma\leq N}\inf_{c_\gamma}
\biggl\|u-\sum_{\gamma=(I,\psi)\in\Gamma}c_\gamma\psi_I\bigg|L_p(\mathcal{O})\biggr\|\,,
\end{equation}
i.e., as the name suggests we consider the best approximation by linear combinations of the basis functions consisting of at most $N$ terms. Of course, in the context of the numerical approximation of the solutions to operator equations, such an approximation scheme would never be implementable because this would require the knowledge of all wavelet coefficients, i.e., of the solution itself. Nevertheless,  it has been shown that the recently developed adaptive wavelet algorithms indeed asymptotically realize the same order of approximation \cite{CDD1, Ste09}! To quantify the approximation rate, we introduce the approximation classes $\ca^\alpha_q(X)$, $\alpha>0$, $0<q\leq\infty$, by requiring
\begin{equation}\label{eq-approxclass}
    \|u|\ca^\alpha_q(L_p(\mathcal{O}))\|
=\biggl(\sum_{N=0}^\infty\Bigl((N+1)^\alpha\sigma_N\bigl(u;L_p(\mathcal{O})\bigr)\Bigr)^q
            \frac{1}{N+1}\biggr)^{1/q}<\infty\,.
\end{equation}

Then a  fundamental result of DeVore, Jawerth, and Popov \cite{DVJP92} states that

\begin{equation*}
\ca^{m/d}_\tau(L_p(\real^d))=B^m_{\tau,\tau}(\real^d)\,,\qquad\frac{1}{\tau}=\frac{m}{d}+\frac{1}{p}\,.
\end{equation*}

Consequently, the optimal approximation rate that can be achieved by adaptive wavelet schemes depends on the Besov smoothness of the unknown solution in the adaptivity scale (\ref{adaptivityscale}).  In contrast to this, the convergence order of classical nonadaptive (uniform) schemes depends on the Sobolev smoothness of the solution, see, e.g.  \cite{Hack92, DDD} for details. The results presented in this paper imply that for each $t \in (0,T)$ the Besov regularity of the unknown solutions of the problems studied here is much higher than the Sobolev regularity,  which justifies the use of spatial adaptive wavelet algorithms.  This corresponds to the classical time-marching schemes such as the Rothe method. We refer e.g. to the monographs \cite{Lan01, Tho06} for a detailed discussion. Of course,  it would be tempting to employ adaptive wavelet strategies in the whole space-time cylinder.  First results in this direction have been reported in \cite{SS09}.  To justify also these schemes,  Besov regularity in the whole space-time cylinder has to be established. This case will be studied in a forthcoming paper.

Quite recently, it has also turned out that the basic relationships outlined above also carry over to discretization schemes based on finite element schemes  \cite{GM09}.  The starting point is an initial triangulation $\mathcal{T}_0$ of the polyhedral domain $D\subset \real^d$. Furthermore,  $\mathbb{T}$ denotes the family of all conforming, shape-regular partitions $\mathcal{T}$ of $D$ obtained from $\mathcal{T}_0$ by refinement using bisection rules. Moreover, $V_{\mathcal{T}}$ denotes the finite element space of continuous piecewise polynomials of degree at most $r$, i.e., 

$$
    V_{\mathcal{T}}=\bigl\{v\in C(\overline{D}):v|_T\in\mathcal{P}_r\text{ for all }T\in\mathcal{T}\bigr\}\,.
$$
Then the counterpart to the quantity $\sigma_N(u)$ is given by
$$
    \sigma^{FE}_N\bigl(u;L_p(D)\bigr)
=\min_{\substack{\mathcal{T}\in\mathbb{T}:\\\#\mathcal{T}-\#\mathcal{T}_0\leq N}}
        \inf_{v\in V_{\mathcal{T}}}\|u-v\|_{L_p(D)},\quad
    0<p<\infty\,.
$$
Then \cite[Thm. 2.2]{GM09} gives direct estimates,
\begin{equation*}
    \sigma_N^{FE}\bigl(u;L_p(D)\bigr)\leq C\,N^{-s/d}\|f|B^s_{\tau,\tau}(D)\|\,. 
\end{equation*}

Therefore, the results presented in this paper also justify the use of adaptive time-marching schemes based on finite elements.

\begin{appendix}

\section{Supplementary results}\label{Appendix-A}

{We provide some auxiliary information on results used throughout the paper.}

\subsection{Embeddings of generalized H\"older spaces}

\remark{\label{quasi-B-deriv}
Let $Y$ be some (quasi-)Banach space such that $X\hookrightarrow Y$. Then it follows that 
\[
C^k(I,X)\hookrightarrow C^k(I,Y)\qquad {\text{and}\qquad \mathcal{C}^{k,\alpha}(I,X)\hookrightarrow \mathcal{C}^{k,\alpha}(I,Y).}
\]
This is an immediate consequence of the definition of the spaces. Let $u: I\rightarrow X\in C^k(I,X)$ with Taylor expansion 
\[
u(t+h)=u(t)+u'(t)h+\frac{1}{2}u''(t)h^2+\ldots + \frac{1}{k!}u^{(k)}(t)h^k+r_k(t,h).
\] 
Then also $u:I\rightarrow Y$ and we have 
\begin{itemize}
\item $u^{(j)}(t)$ depends continuously on $t$ for all $j=0,\ldots, k$, since 
\[
|t-t_0|<\delta \quad \Longrightarrow \quad \|u^{(j)}(t)-u^{(j)}(t_0)|Y\|\leq \|u^{(j)}(t)-u^{(j)}(t_0)|X\|<\varepsilon, 
\]
\item $\ds \lim_{|h|\rightarrow 0}\frac{\|r_k(t,h)|Y\|}{|h|^k}\leq \lim_{|h|\rightarrow 0}\frac{\|r_k(t,h)|X\|}{|h|^k}= 0$,
\end{itemize}
from which we deduce that $u\in C^k(I,Y)$. {Moreover, concerning the generalized H\"older spaces we now observe that 
\begin{eqnarray*}
\|u|\mathcal{C}^{k,\alpha}(I,Y)\|
&=& \|u|C^k(I,Y)\|+\sup_{t,s\in I\atop t\neq s}\frac{\|u(t)-u(s)|Y\|}{|t-s|^{\alpha}}\\
&\leq & \|u|C^k(I,X)\|+\sup_{t,s\in I\atop t\neq s}\frac{\|u(t)-u(s)|X\|}{|t-s|^{\alpha}}=\|u|\mathcal{C}^{k,\alpha}(I,X)\|,
\end{eqnarray*}
which gives the desired result. 
}
}

\subsection{Generalized Sobolev embedding}

{\bf Proof of Theorem \ref{thm-sob-emb}:}  Note that the theorem of Meyers-Serrin extends to the spaces $W^{m}_p(I,X)$, cf.  \cite[Th. 4.11]{Kreu15}. Hence, $C^{\infty}(I,X)$ is dense in $W^{m}_p(I,X)$. It is also shown in  \cite[Prop. 4.3]{Kreu15} that in this case weak derivatives coincide with normal derivatives.  Since $k=m-1$, using \cite[Thm.~1.4.35]{CH98} together with Bochner's Theorem and H\"older's inequality gives for  $u\in C^{\infty}(I,X)$, 
\begin{eqnarray*}
\left\|u^{(k)}(t+h)-u^{(k)}(t)|X\right\|
&= & \left\|\int_t^{t+h}u^{(k+1)}(s) \ud s|X\right\| \\
&\leq & \int_t^{t+h}\left\|u^{(m)}|X\right\|\ud s\\
&\leq &  h^{1-\frac 1p}\left(\int_t^{t+h}\left\|u^{(m)}|X\right\|^p\ud s\right)^{\frac 1p}\\
&\leq & h^{\alpha}\left\|u\right|{W^m_p(I,X)}\|. 
\end{eqnarray*}
Hence,  we see that $\mathrm{id}$ is a linear and bounded operator from the dense subset  $C^{\infty}(I,X)$ of $W^{m}_p(I,X)$ into the Banach-space $\mathcal{C}^{k,\alpha}(I,X)$ admitting  an extension $\widetilde{\mathrm{id}}$ onto $W^{m}_p(I,X)$ with equal norm. This completes the proof.

\end{appendix}


\def\cprime{$'$}

\newpage 

{
\bigskip
\vbox{\noindent Stephan Dahlke\\
Philipps-Universit\"at Marburg\\
FB12 Mathematik und Informatik\\
Hans-Meerwein Stra\ss e, Lahnberge\\
35032 Marburg\\
Germany\\
E-mail: {\tt dahlke@mathematik.uni-marburg.de}\\
Web: {\tt http://www.mathematik.uni-marburg.de/$\sim$dahlke/}\\}
}

{
\bigskip
\smallskip
\vbox{\noindent  Cornelia Schneider\\
Friedrich-Alexander-Universit\"at Erlangen-N\"urnberg\\ 
Department Mathematik, AM3\\
Cauerstr. 11\\ 
91058 Erlangen \\ 
Germany\\
E-mail: {\tt schneider@math.fau.de}\\
Web: {\tt http://www.math.fau.de/$\sim$schneider/}\\}
}

\end{document}